\newcommand{\R}{\mathbb R}
\newcommand{\p}{\partial}
\newcommand{\ve}{\varepsilon}
\newcommand{\f}{\frac}
\newcommand{\al}{\alpha}
\renewcommand{\o}{\omega}
\newcommand{\ds}{\displaystyle}
\theoremstyle{plain}
\newtheorem{theorem}{Theorem}[section]
\newtheorem{proposition}{Proposition}[section]
\newtheorem{lemma}[theorem]{Lemma}
\theoremstyle{definition}
\theoremstyle{remark}
\newtheorem{remark}{Remark}[section]
\numberwithin{equation}{section}
\title{Global small data weak solutions of 2-D semilinear wave equations
with  scale-invariant  damping, III}
\author{Li Qianqian, \qquad Yin Huicheng$^{}$\footnote{Li Qianqian (\texttt{214597007@qq.com}) and Yin Huicheng
    (\texttt{huicheng@} \texttt{nju.edu.cn}, \texttt{05407@njnu.edu.cn}) are supported by the NSFC
    (No.~12331007) and by the National key research and development programs of China (No. 2020YFA0713803, No. 2024YFA1013300).}\vspace{0.5cm}\\
  \small School of Mathematical Sciences and Mathematical Institute,\\
\small Nanjing Normal University, Nanjing 210023, China.\\}
\begin{document}

\date{}

\maketitle
\thispagestyle{empty}

\begin{abstract}
For the $2$-D semilinear wave equation with  scale-invariant  damping
$\square u+\f{\mu}{t}\p_tu=|u|^p$,
where $t\ge 1$, $\mu>0$ and $p>1$,  it is conjectured that  the
global small data weak solution $u$ exists when $p>p_{s}(2+\mu)
=\f{\mu+3+\sqrt{\mu^2+14\mu+17}}{2(\mu+1)}$ for  $0<\mu\le 2$ and $p>p_f(2)=2$ for $\mu\ge 2$. In our previous papers,
the global small solution $u$ has been obtained for $p>p_{s}(2+\mu)$
and $0<\mu<2$ but $\mu\not=1$. In the present paper, by the vector field method
together with the delicate analysis on the Bessel functions, we will show the global existence of small solution $u$ for
$p>2$ and $\mu>2$. In forthcoming paper, for $\mu=1$ and $p>p_{s}(2+\mu)=p_{s}(3)=1+\sqrt 2$,
the global solution $u$ is also obtained.
Therefore, collecting our series of conclusions together with partial results from others, this open question has been solved completely.
\end{abstract}

\noindent
\textbf{Keywords.} Scale-invariant damping,  global existence, Bessel function, vector field,

\qquad \quad Klainerman-Sobolev inequality

\vskip 0.1 true cm

\noindent
\textbf{2010 Mathematical Subject Classification} 35L70, 35L65, 35L67

\tableofcontents

%% -------------------------------------------------------------------
\section{Introduction}
In this paper, we continue to study  the 2-D semilinear wave equation with scale-invariant  damping
\begin{equation}\label{equ:eff1}
\left\{ \enspace
\begin{aligned}
&\square u+\f{\mu}{t}\,\p_tu=|u|^p, \\
&u(1,x)=\ve u_0(x), \quad \partial_{t} u(1,x)=\ve u_1(x),
\end{aligned}
\right.
\end{equation}
where $\mu>0$,  $p>1$, $t\ge 1$, $x=(x_1, x_2)\in\Bbb R^2$,
$\p_i=\p_{x_i}$ ($i=1, 2$), $\square=\partial_t^2-\Delta$ with $\Delta=\p_1^2+\p_2^2$, $u_k\in
C_0^{\infty}(\R^2)$ and $\operatorname{supp} u_k\in B(0, 1)$  ($k=0, 1$), $\ve>0$ is small.
The equation in \eqref{equ:eff1} is also called as the semilinear Euler-Poisson-Darboux equation (see \cite{DA-0}
and the references therein). For  more detailed physical backgrounds on the linear wave operator $\square+\f{\mu}{t}\p_t$,
one can be referred to \cite{LWY} and so on.

Indicate the Strauss index $p_s(z)=\f{z+1+\sqrt{z^2+10z-7}}{2(z-1)}$ ($z>1$)
as a positive root of the quadratic  algebraic equation
$(z-1)p^2-(z+1)p-2=0$ (see \cite{Strauss} for the Strauss index $p_s(n)=\f{n+1+\sqrt{n^2+10n-7}}{2(n-1)}$ of the semilinear wave equation
$\square u=|u|^p$), and $p_f(n)=1+\f{2}{n}$ as the Fujita index
(see \cite{Fuj} for the Fujita index $p_f(n)=1+\f{2}{n}$ of the semilinear parabolic equation
$\partial_t u-\Delta u=|u|^p$).
In terms of \cite{Rei1} and \cite{Imai}, there is an interesting open question on \eqref{equ:eff1} as follows:

{\bf Open question (A).} {\it For 2-D problem \eqref{equ:eff1},

{\bf (A1)} when $0<\mu<2$ and $p>p_s(2+\mu)$, the small solution $u$ exists globally;

{\bf (A2)} when $\mu\ge 2$ and $p>p_f(2)=2$, there exists a global solution $u$.
}

It is pointed out that the blowup results of problem \eqref{equ:eff1} have been established for
$1<p\le \max\{p_s(2+\mu), p_f(2)\}$ (see \cite{IS}, \cite{LTW}, \cite{PR-0}-\cite{PR-1}, \cite{TL2} and \cite{W1}).
On the other hand, in our previous paper \cite{LWY} and \cite{HLWY},
{\bf (A1)} has been proved except $\mu=1$. Note that {\bf (A2)} with  $\mu\ge 3$ or $\mu=2$
holds (see \cite{DA, Rei1}). Therefore, in order to solve {\bf (A2)}, we only require to
study the case of $2<\mu<3$ in \eqref{equ:eff1}.
Our main result is stated as follows.
\begin{theorem}\label{YH-1}
For $2<\mu<3$ and $p>p_{f}(2)=2$, there exists a small constant $\varepsilon_0>0$ such that when $0<\ve<\ve_0$,
\eqref{equ:eff1} admits a unique global solution $u\in C\left([1, \infty) ; H^{2}\right) \cap C^{1}\left([1, \infty) ; H^{1}\right)\cap C^{2}\left([1, \infty) ; L^{2}\right)$.
\end{theorem}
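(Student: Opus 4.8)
The plan is to remove the damping via the Liouville-type substitution $v(t,x)=t^{\mu/2}u(t,x)$, which kills the first-order term and turns \eqref{equ:eff1} into the wave equation with a scale-critical inverse-square potential
\[
\square v-\f{\delta^2}{t^2}\,v=t^{-\f{\mu(p-1)}{2}}|v|^p,\qquad \delta^2:=\f{\mu(\mu-2)}{4}>0,
\]
with rescaled data $v(1,x)=\ve u_0(x)$, $\p_tv(1,x)=\ve\big(u_1(x)+\tfrac{\mu}{2}u_0(x)\big)$. Two structural facts organize the whole argument. First, since $p>p_f(2)=2$ one has $p-1>1$, hence (because $\mu>2$) the time weight multiplying the nonlinearity obeys $\f{\mu(p-1)}{2}>\f{\mu}{2}>1$, so it is \emph{integrable in $t$}, with a convergence margin $\ge\f{\mu}{2}-1>0$ that stays positive even as $p\downarrow2$ --- this is the Fujita-type gain of the overdamping. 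Second, separating variables (partial Fourier transform in $x$), $\widehat v(t,\xi)$ solves a Bessel equation of index $\alpha:=\sqrt{\tfrac14+\delta^2}=\f{\mu-1}{2}\in(\tfrac12,1)$, so the linear propagator of $\square-\delta^2/t^2$ is an explicit superposition of $\sqrt t\,J_{\pm\alpha}(|\xi|t)\,e^{ix\cdot\xi}$. Finite speed of propagation (the zeroth-order potential does not change the characteristics) keeps the solution supported in $\{|x|\le t\}$, so all weighted quantities below are finite.

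\emph{Linear decay via Bessel asymptotics.} I would write the solution of the linear Cauchy problem for $\square-\delta^2/t^2$ as the oscillatory integral built from the kernel above, split the frequency integral into $|\xi|t\lesssim1$ and $|\xi|t\gtrsim1$, use the small-argument behaviour $J_{\pm\alpha}(r)\sim r^{\pm\alpha}$ on the first piece, and on the second the large-argument expansion $J_{\pm\alpha}(r)=\sqrt{2/(\pi r)}\cos\!\big(r-\tfrac\pi2(\pm\alpha)-\tfrac\pi4\big)+O(r^{-3/2})$ together with stationary phase near $|x|=t$. This yields the pointwise-in-$x$ decay rate of the linear flow in $t$ and, coupled with the $L^2$ energy estimates of the next step, a Klainerman--Sobolev-type inequality adapted to $\square-\delta^2/t^2$ of the shape $|v(t,x)|\lesssim t^{-\sigma}\langle t-|x|\rangle^{-\tau}\sum_{|a|\le N}\|Z^av(t)\|_{L^2}$, equivalently $|u(t,x)|\lesssim t^{-\mu/2-\sigma}\langle t-|x|\rangle^{-\tau}\sum_{|a|\le N}\|Z^av(t)\|_{L^2}$, with $\sigma,\tau$ determined by $\alpha$ and $N$ a fixed integer coming from the $2$-D Klainerman--Sobolev inequality.

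\emph{Energy estimates and the nonlinear iteration.} For the energy I would stay with $u$, where the damping has the favourable sign: testing the equation in \eqref{equ:eff1} with $2\p_tu$ gives $\f{d}{dt}\int\big(|\p_tu|^2+|\na u|^2\big)=-\f{2\mu}{t}\int|\p_tu|^2+2\int|u|^p\p_tu$, so the energy only loses from the source. The commuting fields are $Z\in\{\p_1,\p_2,\O,S\}$ with $\O=x_1\p_2-x_2\p_1$ the rotation and $S=t\p_t+x\cdot\na$ the scaling, and, with slightly more care, the boosts $L_i=t\p_i+x_i\p_t$: one checks $[S,\square+\f{\mu}{t}\p_t]=-2(\square+\f{\mu}{t}\p_t)$, $[\p_i,\square+\f{\mu}{t}\p_t]=[\O,\square+\f{\mu}{t}\p_t]=0$, $[L_i,\square+\f{\mu}{t}\p_t]=-\f{\mu}{t^2}L_i$, so $Z^au$ solves a damped wave equation whose source is $Z^a(|u|^p)$ plus lower-order terms carrying an extra $t^{-1}$ or $t^{-2}$ factor (hence $L^1$ in $t$). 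I would then work in the weighted space $X$ with $\|u\|_X=\sup_{t\ge1}\big(\sum_{|a|\le N}\big(\|\p_tZ^au(t)\|_{L^2}+\|\na Z^au(t)\|_{L^2}\big)+t^{\mu/2+\sigma}\langle t-|\cdot|\rangle^\tau\|u(t)\|_{L^\infty}\big)$, establish local existence and a continuation criterion in $C([1,T);H^2)\cap C^1([1,T);H^1)\cap C^2([1,T);L^2)$ by the standard energy method ($u\mapsto|u|^p$ is $H^2$-admissible since $p>2$), and run a bootstrap: assuming $\|u\|_X\le2C_0\ve$, the adapted Klainerman--Sobolev decay controls $\|Z^a(|u|^p)\|_{L^2}$ by $t^{-(p-1)(\mu/2+\sigma)}$ times a $\langle t-|\cdot|\rangle$-weighted factor times $\sum_{|b|\le N}\|Z^bu\|_{L^2}$, and the Duhamel time integral converges because $(p-1)\mu/2\ge\mu/2>1$ (together with $\sigma>0$ and the extra cone weights). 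This closes $\|u\|_X\le C_0\ve+C_1(2C_0\ve)^p\le2C_0\ve$ for $\ve$ small, which with the continuation criterion produces the global solution in the asserted class.

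\emph{The main obstacle} is the linear dispersive analysis of $\square-\delta^2/t^2$ exactly when $\alpha=\f{\mu-1}{2}$ is irrational, the generic case for $2<\mu<3$: then $J_{\pm\alpha}$ has no elementary closed form (unlike the half-integer or integer orders produced by special values of $\mu$), and one has to follow the two oscillatory phases $e^{\pm i|\xi|t}$ and their polynomial corrections \emph{uniformly in $|\xi|$} through the low-frequency, high-frequency and stationary (light-cone) regimes, matching them across $|\xi|t\sim1$ and tracking the exact powers so that the resulting decay exponents $\sigma,\tau$ are sharp enough to close the bootstrap. A second, milder issue is carrying the lower-order commutator terms with $t^{-1}$ and $t^{-2}$ coefficients through the high-order energy estimates; they are harmless by their time-integrability and the good damping sign, but must be tracked consistently at every order $|a|\le N$.
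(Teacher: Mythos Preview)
Your Liouville substitution and the Bessel-index computation $\alpha=\frac{\mu-1}{2}$ are correct, and you have identified the decisive arithmetic $\frac{\mu(p-1)}{2}>1$ that makes the Duhamel time integral convergent. The commutator formulas are right as well. But the bootstrap you propose does not close as written.

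The gap is in the choice of norm and in what the energy identity actually delivers. Your $X$--norm carries only $\sum_{|a|\le N}\|\partial Z^{a}u(t)\|_{L^2}$ together with a weighted $L^\infty$ piece. Multiplying by $2\partial_t u$ gives, as you say, that the energy is nonincreasing modulo the source; it does \emph{not} give decay of the energy, nor any control of $\|Z^{a}u\|_{L^2}$ (equivalently of $\|Z^{a}v\|_{L^2}$). Yet the Klainerman--Sobolev step you invoke needs precisely those undifferentiated $L^2$ norms, and with bounded (not decaying) energy in two space dimensions the standard inequality produces at best $|u|\lesssim t^{-1/2}\langle t-|x|\rangle^{-1/2}$ times an $L^2$ quantity you do not control, far from the $t^{-\mu/2-\sigma}$ you need with $\mu/2>1$. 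An ``adapted Klainerman--Sobolev inequality'' for $\square-\delta^2/t^2$ is not a known black box; proving it is exactly the hard Bessel analysis you list as the main obstacle, and once that analysis is done it already yields the $L^2$-based decay of $u$ and $\partial u$ that makes the separate energy/bootstrap machinery unnecessary. The commutator terms $\frac{\mu}{t^2}L_iu$ are also not ``harmless by time-integrability'' without an a priori bound on $\|L_iu\|_{L^2}$, which again your norm does not include.

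The paper takes the more direct route your plan keeps circling back to. It stays with $u$, writes $\hat u(t,\xi)=\Psi_0(t,1,\xi)\hat u_0+\Psi_1(t,1,\xi)\hat u_1$ via Hankel/Bessel functions of order $\rho=-\frac{\mu-1}{2}$, and proves multiplier bounds zone by zone in frequency ($|\xi|\ge1$, $t^{-1}\le|\xi|\le1$, $|\xi|\le t^{-1}$). These yield directly $\|u(t)\|_{Z,1,2}\lesssim t^{\delta-1}$ and $\|\partial u(t)\|_{Z,1,2}\lesssim t^{-1}$ for the homogeneous flow, together with the corresponding Duhamel estimates with a factor $\tau$ on the source. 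The nonlinear step is then a contraction in the norm $\sup_t\big(t^{1-\delta}\|u\|_{Z,1,2}+t\|\partial u\|_{Z,1,2}\big)$, closed using the mixed $(p,q)$ norms of Li--Yu and the same integrability you spotted. In short: the Bessel analysis has to be carried out in full, and once it is, it supplies both the $\|Z^{a}u\|_{L^2}$ decay and the $\|\partial Z^{a}u\|_{L^2}$ decay; no energy identity or bootstrap on an $L^\infty$ piece is needed.
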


\begin{remark}\label{RE-1}
{\it In our forthcoming  paper \cite{HLWY-1},
{\bf (A1)} with $\mu=1$ and $p>p_s(2+\mu)=1+\sqrt{2}$  will be shown by applying
the Bessel function tools and deriving some suitable time-decay or spacetime-decay estimates of weak solution $u$
as in \cite{DA-0}, \cite{HLWY}  and \cite{LWY}.
Collecting the results in \cite{LWY} and \cite{HLWY} (for $0<\mu<2$ but $\mu\not=1$, $p>p_s(2+\mu)$), \cite{HLWY-1}
 (for $\mu=1$, $p>p_s(2+\mu)=1+\sqrt{2}$) together with
Theorem \ref{YH-1} in the paper (for $2<\mu<3$ and $p>2$) , \cite[Theorem 2]{Rei1}  (for $\mu=2$ and $p>2$)
and \cite[Theorem 2]{DA} (for $\mu\ge 3$ and $p>2$),
then the {\bf Open question (A)} has been solved completely.}
\end{remark}

\begin{remark}\label{RE-2}
{\it For the $n$-dimensional ($n\ge3$) semilinear wave equation with scale-invariant  damping
\begin{equation}\label{equ:eff1-2}
\left\{ \enspace
\begin{aligned}
&\square u +\f{\mu}{t}\,\p_tu=|u|^p, \\
&u(1,x)=\ve u_0(x), \quad \partial_{t} u(1,x)=\ve u_1(x),\quad x\in\Bbb R^n,
\end{aligned}
\right.
\end{equation}
where $\square=\partial_t^2-(\p_1^2+\cdot\cdot\cdot+\p_n^2)$, $\mu\in (0,1)\cup (1,2)$ and $p>p_s(n+\mu)=\f{n+\mu+1+\sqrt{(n+\mu)^2+10(n+\mu)-7}}{2(n+\mu-1)}$,
as pointed in Remark 1.12 of \cite{HLWY}, the global existence
of $u$ can be analogously shown by the methods in \cite{LWY} and \cite{HLWY}.
In addition, for $\mu=1$, $n\ge 4$ and $p>p_s(n+1)=\f{n+2+\sqrt{n^2+12n+4}}{2n}$,
the authors in \cite{HL} establish the global solution $u$ to problem \eqref{equ:eff1-2}.}
\end{remark}

\begin{remark}\label{RE-3}
{\it We emphasize that the series of methods developed in \cite{LWY}, \cite{HLWY}
and the present paper will be very helpful in showing the global solvability of
\eqref{equ:eff1-2} when $\mu>0$ and $p>\max\{p_s(n+\mu), p_f(n)\}$.}
\end{remark}

In order to prove Theorem \ref{YH-1}, let us recall the important vector field method in studying the
global small data solution problem of $n-$dimensional  ($n\ge 2$)  quasilinear wave equation
$\square \phi+\sum_{i,j,k=0}^ng_{ij}^k\p_{ij}^2\phi\p_k\phi=0$
(when $n=2,3$, the related null condition holds, namely, $\sum_{i,j,k=0}^ng_{ij}^k\xi_i\xi_j\xi_k$
$\equiv 0$ for $\xi_0=-1$ and $(\xi_1, \cdot\cdot\cdot, \xi_n)\in S^n$).
Define the vector field $Z\in\{\p, L_0, L_j, \Omega_{i j}:  1\le i<j\le n\}$,
where $\p=(\partial_0, \partial_1, \ldots, \partial_n)$ $=(\partial_t, \partial_{x_1}, \ldots, \partial_{x_n})$, $L_0=t \partial_t+x_1 \partial_1+\cdots+x_n \partial_n$,
$L_j=t\partial_j+x_j \partial_t$ and $\Omega_{i j}=x_i \partial_j-x_j \partial_i$.
Then one easily has the following commutator properties
$$[\square, \p]=0,\quad [\square, L_j]=0, \quad [\square, \Omega_{i j}]=0,\quad [\square, L_0]=2\square.$$
Based on this, it holds that for $Z^I\phi$ ($|I|\le N_0$ with $N_0\in\Bbb N$ being suitably  large),
\begin{equation}\label{YSon-0}
\square Z^I\phi+\sum_{i,j,k=0}^n\sum_{|I'|+|I''|\le|I|}C_{ij}^k\p_{ij}^2Z^{I'}\phi\p_kZ^{I''}\phi=0.
\end{equation}
Therefore, it follows from \eqref{YSon-0} that $\{Z^I\phi\}_{|I|\le N_0}$ consists of a closed nonlinear wave system
and then $\|Z^I\p \phi\|_{L_x^2(\mathbb R^n)}$ can be obtained by the standard energy estimate.
By the crucial Klainerman-Sobolev inequality
\begin{equation}\label{YSon-1}
|\p \phi(t,x)|\le \f{C}{(1+|t-r|)^{\f12}(1+t)^{\f{n-1}{2}}}\ds\sum_{|J|\le [\f{n}{2}]+1}\|Z^J\p \phi(t,x)\|_{L_x^2(\mathbb R^n)},
\end{equation}
where $r=|x|=\sqrt{x_1^2+\cdot\cdot\cdot+x_n^2}$, then the time-decay and spacetime-decay rates of $\p\phi$ are obtained.
Making use of \eqref{YSon-1} and delicate weighted energies,
a lot of interesting results on the global existence of the small data
solution $\phi$ have been established.
One can be referred to \cite{A}-\cite{A3}, \cite{Ding1}-\cite{Ding2}, \cite{H}, \cite{KS}-\cite{KP}, \cite{Li-Chen}-\cite{LX}
and \cite{Lin0}-\cite{Lin2}.

Note that for 2-D equation $\square u +\f{2}{t}\,\p_tu=|u|^p$, by the transformation $u=tv$,
then the resulting semilinear wave equation $\square v=t^{-(p-1)}|v|^p$ can be obtained. By the standard vector field method
and the energy estimate as in \cite{LX}, the authors in Theorem 2 of \cite{Rei1} show that the small data solution
$u\in C([1,\infty), H^2(\Bbb R^2))\cap C^1([1,\infty), H^1(\Bbb R^2))\cap C^2([1,\infty), L^2(\Bbb R^2))$ exists globally.
However, for the general 2-D equation $\square u +\f{\mu}{t}\,\p_tu=|u|^p$ with $2<\mu<3$, it is difficult to find
a suitable transformation of the unknown function $u$ such that a related semilinear wave equation is derived.
Moreover, the commutator $[\square+\f{\mu}{t}\,\p_t, Z]$ can not be expressed as an efficient linear combination of
$\square+\f{\mu}{t}\,\p_t$ and $Z$. Indeed, letting $w_1=\p_t u$ and $w_2=L_1 u$, then one has from $\square u +\f{\mu}{t}\,\p_tu=|u|^p$
that
\begin{equation}\label{YSon-6}
\square w_1 +\f{\mu}{t}\,\p_tw_1- \f{\mu}{t^2}w_1=\p_t(|u|^p)
\end{equation}
and
\begin{equation}\label{YSon-7}
\square w_2 +\f{\mu}{t}\,\p_tw_2-\f{\mu}{t}\p_1u- \f{\mu x_1}{t^2}w_1=L_1(|u|^p).
\end{equation}
This means that the linear parts in the nonlinear equations \eqref{YSon-6} and \eqref{YSon-7}
become more complicated than the Euler-Poisson-Darboux operator together with the Klein-Gordon type's operator.
To overcome this essential difficulty, we will take the following measures.

$\bullet$ By the Bessel function and Fourier analysis tools, through the explicit expression of
solution to the 2-D linear equation
\begin{equation}\label{YSon-2}
\square v+\f{\mu}{t}\,\p_tv=F(t,x),\quad (v,\p_tv)(1,x)=(v_0,v_1)(x),
\end{equation}
the required estimates of $v$ in some suitable norms are established.
We will utilize the following norms for $v$:
\begin{equation}\label{norm}
\|v\|_{Z, s,(p, q)}=\sum_{|\alpha| \leq s}\left\|Z^\alpha v\right\|_{(p, q)}, \quad
\|v\|_{Z, s, p}=\|v\|_{Z, s,(p, p)}, \quad
\|v\|_{Z, s, \infty}=\sum_{|\alpha| \leq s}\left\|Z^\alpha v\right\|_{\infty},
\end{equation}
where $s\in\Bbb N_0$, and $\|v\|_{(p,q)} :=\|v(r \omega) r^{\frac{1}{p}}\|_{L^p([0,+\infty); L^q(S^1))}$
for $p,q\in[1,\infty]$ and $\o=\f{x}{r}\in S^1$.
It is pointed out that the norms in \eqref{norm} and in higher space dimensions are firstly introduced in \cite{LX}
for studying the lower bounds
of the lifespan of classical solutions to the Cauchy problem for fully nonlinear wave equation
$\square u=F(u,\p u, \p^2 u)$, where the space dimensions $n\ge 3$ and the smooth nonlinearity
$F(u,\p u, \p^2 u)=O(|(u, \p u, \p^2 u)|^{1+\lambda})$ ($\lambda\in\Bbb N$).

$\bullet$ Introducing the function space $X(T)=\{u(t,x)\in X(T): \sup _{t \in[1, T]}(t^{-(\delta-1)}\|u\|_{Z, 1, 2}
+t\|\p u\|_{Z, 1, 2})<\infty\}$
and the norm $\|u\|_{X(T)}:=\sup _{t \in[1, T]}(t^{-(\delta-1)}\|u\|_{Z, 1, 2}+t\|\p u\|_{Z, 1, 2})$,
where $T>1$ is any fixed number, $\delta>0$ is a suitable constant.
On the other hand, it follows from \eqref{equ:eff1} that one may define such a nonlinear mapping
\begin{equation}\label{Yson-3}
\mathcal{N} u(t, x)=\ve\Psi_{0}(t, 1, D)u_{0}(x)
+\ve\Psi_{1}(t, 1, D)u_{1}(x)+\int_{1}^{t} \Psi_{1}(t, \tau, D)|u(\tau, x)|^{p} \mathrm{d} \tau,
\end{equation}
where the symbols of the pseudodifferential operators $\Psi_{0}(t, 1, D)$ and $\Psi_{1}(t, 1, D)$
are given in \eqref{equ:q5} and \eqref{equ:q6} of Section \ref{sec2-1} below.
Based on the key estimates for \eqref{norm}, we can show that the mapping $\mathcal{N}$ in \eqref{Yson-3}
is contractible in a closed subspace of $X(T)$. Therefore, by the fixed point principle,
problem \eqref{equ:eff1} admits a global small solution $u\in C\left([1, \infty) ; H^{2}\right) \cap C^{1}\left([1, \infty) ; H^{1}\right)\cap C^{2}\left([1, \infty) ; L^{2}\right)$.

This paper is organized as follows. In Section 2, by the first kind of Bessel function and Hankel function,
the explicit expression of the solution $v$ is given for the linear homogeneous equation $\square v +\f{\mu}{t}\,\p_tv=0$
with $(v,\p_tv)(\tau,x)=(v_0(x), v_1(x))$.
In Section 3, a series of time-decay estimates for the solution $v$ of $\square v+\f{\mu}{t}\,\p_tv=0$
are derived under the action of the vector field $Z\in\{\p, L_0, L_j, \Omega_{12}:  1\le j\le 2\}$.
In addition, in Section 4, the time-decay estimates for the linear inhomogeneous
equation $\square w +\f{\mu}{t}\,\p_tw=F$ with $(w,\p_tw)(1,x)=(0, 0)$ are established.
In Section 5, based on the estimates in Sections 3-4,
we complete the  proof of Theorem  \ref{YH-1} by using Duhamel's principle and contraction mapping principle.

\vskip 0.2 true cm

Below, we will use the following notations:

$\bullet$  For the nonnegative quantities $f$ and $g$,
$f\lesssim g$ means $f \leq C g$, where the generic constant $C > 0$ is independent of $\ve$;

$\bullet$  For the nonnegative quantities $f$ and $g$, $f \sim g$ means $g\lesssim f \lesssim g$;

$\bullet$  $\p=(\partial_t, \partial_1, \partial_2)$, $L_0=t \partial_t+x_1 \partial_1+x_2 \partial_2$,
$L_j=t\partial_j+x_j \partial_t$ ($j=1,2$) and $\Omega_{12}=x_1 \partial_2-x_2 \partial_1$;

$\bullet$  $Z\in\{\p, L_0, L_j, \Omega_{12}:  1\le j\le 2\}$;

$\bullet$ For $p,q\in[1,\infty]$, $\|v\|_{(p,q)} :=\|v(r \omega) r^{\frac{1}{p}}\|_{L^p([0,+\infty); L^q(S^1))}$
with $\o=\f{x}{r}\in S^1$;

$\bullet$ For $s\in\Bbb N_0$ and  $p,q\in[1,\infty]$,
\begin{equation*}\label{norm-1}
\|v\|_{Z, s,(p, q)}=\sum_{|\alpha| \leq s}\left\|Z^\alpha v\right\|_{(p, q)}, \quad
\|v\|_{Z, s, p}=\|v\|_{Z, s,(p, p)}, \quad
\|v\|_{Z, s, \infty}=\sum_{|\alpha| \leq s}\left\|Z^\alpha v\right\|_{\infty},
\end{equation*}

\quad where $\|v\|_{(p,q)} :=\|v(r \omega) r^{\frac{1}{p}}\|_{L^p([0,+\infty); L^q(S^1))}$
and $\o=\f{x}{r}\in S^1$;

$\bullet$ For $\xi\in\Bbb R^2$ and $t\ge 1$, define
\begin{equation*}\label{equ:q19}
A_1=\{\xi: |\xi|\ge 1\}, \quad A_2=\{\xi: |\xi| \leq 1 \leq t|\xi|\}, \quad A_3=\{\xi: t|\xi| \leq 1\}.
\end{equation*}

\section{The expression of solution to 2-D equation $\square v+\f{\mu}{t}\,\p_tv=0$}\label{sec2-1}

In this section, we derive the expression of the solution $v$ to 2-D problem
\begin{equation}\label{equ:q1}
\left\{ \enspace
\begin{aligned}
&\square v+\f{\mu}{t}\,\p_tv=0, &&
t\geq \tau \geq1,\\
&v(\tau,x)=v_0(x), \quad \partial_{t} v(\tau,x)=v_1(x), &&x\in\R^2,
\end{aligned}
\right.
\end{equation}
where $2<\mu<3$ and $v_i(x)\in C_0^{\infty}(\R^2)$ $(i=0, 1)$.

Taking the Fourier transformation of $v(t,x)$ with respect to the variable $x$, then
one has from \eqref{equ:q1} that
\begin{equation}\label{equ:q2}
\left\{ \enspace
\begin{aligned}
&\partial_t^2 \hat{v} +|\xi|^{2} \hat{v} +\f{\mu}{t}\,\p_t\hat{v}=0, &&
t\geq \tau \geq 1,\\
&\hat{v}(\tau,\xi)=\hat{v}_0(\xi), \quad \partial_{t} \hat{v}(\tau,\xi)=\hat{v}_1(\xi), &&\xi\in\R^2.
\end{aligned}
\right.
\end{equation}
It follows from   \cite[Theorem 2.1]{Wirth-3} that the solution $\hat{v}(t, \xi)$ of
\eqref{equ:q2} can be expressed as
\begin{equation}\label{equ:q4}
\hat{v}(t, \xi)=\Psi_{0}(t, \tau, \xi) \hat{v}_{0}(\xi)+\Psi_{1}(t, \tau, \xi) \hat{v}_{1}(\xi),
\end{equation}
where the multipliers $\Psi_{j}(t, \tau, \xi)(j=0,1)$ and its time derivatives are given by
\begin{equation}\label{equ:q5}
\Psi_{0}(t, \tau, \xi)=\frac{i \pi}{4}|\xi| \frac{t^{\rho}}{\tau^{\rho-1}}\left|\begin{array}{ll}
H_{\rho}^{+}(t|\xi|) & H_{\rho-1}^{+}(\tau|\xi|) \\
H_{\rho}^{-}(t|\xi|) & H_{\rho-1}^{-}(\tau|\xi|)
\end{array}\right|,
\end{equation}
\begin{equation}\label{equ:q6}
\Psi_{1}(t, \tau, \xi)=-\frac{i \pi}{4} \frac{t^{\rho}}{\tau^{\rho-1}}\left|\begin{array}{ll}
H_{\rho}^{+}(t|\xi|) & H_{\rho}^{+}(\tau|\xi|) \\
H_{\rho}^{-}(t|\xi|) & H_{\rho}^{-}(\tau|\xi|)
\end{array}\right|,
\end{equation}
\begin{equation}\label{equ:q9}
\partial_{t}^{k} \Psi_{j}(t, \tau, \xi)=(-1)^{j} \frac{i \pi}{4}|\xi|^{1+k-j} \frac{t^{\rho}}{\tau^{\rho-1}}\left|\begin{array}{ll}
H_{\rho-k}^{+}(t|\xi|) & H_{\rho-1+j}^{+}(\tau|\xi|)  \\
H_{\rho-k}^{-}(t|\xi|) & H_{\rho-1+j}^{-}(\tau|\xi|)
\end{array}\right|,\quad  k,j=0,1,
\end{equation}
where $\rho=-\f{\mu-1}{2}\in (-1, -\f12)$, $H_{\nu}^{\pm}$ are the Hankel functions of order $\nu$ ($\nu\not\in\Bbb Z$),
which can be expressed as (see \cite[Section 10]{OLBC})
\begin{equation}\label{equ:q10}
\begin{aligned}
&H_{\nu}^{+}(z)=i \csc (\nu \pi)\left(e^{-\nu \pi i} J_{\nu}(z)-J_{-\nu}(z)\right),\\
&H_{\nu}^{-}(z)=i \csc (\nu\pi)\left(J_{-\nu}(z)-e^{\nu\pi i} J_{\nu}(z)\right)
\end{aligned}
\end{equation}
with the first kind of Bessel function $J_{\gamma}(z)=(\f{z}{2})^{\gamma}\sum_{k=0}^{\infty}(-1)^k\f{z^{2k}}{4^kk!\Gamma(\gamma+k+1)}$
for $\gamma\in \mathbb{C} \setminus \Bbb{Z}$.

Therefore,
\begin{equation}\label{equ:q11}
\partial_{t}^{k} \Psi_{j}(t, \tau, \xi)=\frac{\pi}{2} \csc  (\rho \pi)|\xi|^{1+k-j} \frac{t^{\rho}}{\tau^{\rho-1}}
 \left|\begin{array}{cc}
J_{-(\rho-1+j)}(\tau|\xi|) & J_{-(\rho-k)}(t|\xi|) \\
(-1)^{1+k-j} J_{\rho-1+j}(\tau|\xi|) & J_{\rho-k}(t|\xi|)
\end{array}\right|,\quad  k, j=0,1.
\end{equation}

Based on \eqref{equ:q11}, we have
\begin{lemma}\label{lem6}
It holds that for $\rho=-\f{\mu-1}{2}\in (-1, -\f12)$,
\begin{equation}\label{l-1}
\begin{aligned}
\p_t^2\Psi_{0}(t, \tau, \xi)& =\frac{i \pi}{4}|\xi|^3 \frac{t^{\rho}}{\tau^{\rho-1}}\left|\begin{array}{ll}
H_{\rho-2}^{+}(t|\xi|) & H_{\rho-1}^{+}(\tau|\xi|) \\
H_{\rho-2}^{-}(t|\xi|) & H_{\rho-1}^{-}(\tau|\xi|)
\end{array}\right|+ t^{-1}\p_t\Psi_{0}(t, \tau, \xi),
\end{aligned}
\end{equation}
\begin{equation}\label{l-2}
\begin{aligned}
\p_t^2\Psi_{1}(t, \tau, \xi)& =-\frac{i \pi}{4}|\xi|^2 \frac{t^{\rho}}{\tau^{\rho-1}}\left|\begin{array}{ll}
H_{\rho-2}^{+}(t|\xi|) & H_{\rho}^{+}(\tau|\xi|) \\
H_{\rho-2}^{-}(t|\xi|) & H_{\rho}^{-}(\tau|\xi|)
\end{array}\right|+ t^{-1}\p_t\Psi_{1}(t, \tau, \xi).
\end{aligned}
\end{equation}
Meanwhile, one also has
\begin{equation}\label{l-3}
\begin{aligned}
\p_t^2\Psi_{0}(t, \tau, \xi)
& =\frac{\pi}{2}|\xi|^3 \frac{t^{\rho}}{\tau^{\rho-1}}\csc(\rho\pi)\left|\begin{array}{ll}
J_{-(\rho-1)}(\tau|\xi|) & J_{-(\rho-2)}(t|\xi|) \\
-J_{\rho-1}(\tau|\xi|) & J_{\rho-2}(t|\xi|)
\end{array}\right|+ t^{-1}\p_t\Psi_{0}(t, \tau, \xi),
\end{aligned}
\end{equation}
\begin{equation}\label{l-4}
\begin{aligned}
\p_t^2\Psi_{1}(t, \tau, \xi)
& =\frac{ \pi}{2}|\xi|^2 \frac{t^{\rho}}{\tau^{\rho-1}}\csc(\rho\pi)\left|\begin{array}{ll}
J_{-\rho}(\tau|\xi|) & J_{-(\rho-2)}(t|\xi|) \\
J_{\rho}(\tau|\xi|) & J_{\rho-2}(t|\xi|)
\end{array}\right|+ t^{-1}\p_t\Psi_{1}(t, \tau, \xi).
\end{aligned}
\end{equation}
\end{lemma}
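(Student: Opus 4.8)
The plan is to obtain all four identities by differentiating once more in $t$ the expressions for $\p_t\Psi_j$ that are already in hand: these are precisely \eqref{equ:q9} in Hankel form and \eqref{equ:q11} in Bessel form, taken with $k=1$. The only analytic inputs needed are the two standard recurrences for a cylinder function $C_\nu\in\{J_\nu,H_\nu^{\pm}\}$,
\[
\f{d}{dz}C_\nu(z)=C_{\nu-1}(z)-\f{\nu}{z}C_\nu(z),\q\q \f{d}{dz}C_\nu(z)=\f{\nu}{z}C_\nu(z)-C_{\nu+1}(z),
\]
together with the multilinearity of a $2\times2$ determinant in its columns.

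First I would prove \eqref{l-1}--\eqref{l-2}. Starting from \eqref{equ:q9} with $k=1$, i.e. $\p_t\Psi_j$ equals $(-1)^j\f{i\pi}{4}|\xi|^{2-j}\f{t^\rho}{\tau^{\rho-1}}$ multiplied by the determinant whose $t|\xi|$-column has entries $H^{\pm}_{\rho-1}(t|\xi|)$ and whose $\tau|\xi|$-column has entries $H^{\pm}_{\rho-1+j}(\tau|\xi|)$, I differentiate in $t$. The prefactor $t^\rho$ contributes $\f{\rho}{t}\,\p_t\Psi_j$. Differentiating the $t|\xi|$-column uses the first recurrence above with $\nu=\rho-1$ for both rows; by column-linearity the $-\f{\rho-1}{z}H^{\pm}_{\rho-1}$ part reassembles $-\f{\rho-1}{t}\,\p_t\Psi_j$, while the $H^{\pm}_{\rho-2}$ part produces the determinant of \eqref{l-1}/\eqref{l-2} with one extra factor $|\xi|$. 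Since $\f{\rho}{t}-\f{\rho-1}{t}=\f{1}{t}$, collecting everything yields exactly \eqref{l-1} for $j=0$ and \eqref{l-2} for $j=1$, the power $|\xi|^{3-j}$ matching $|\xi|^3$ and $|\xi|^2$ respectively.

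For \eqref{l-3}--\eqref{l-4} I would run the same argument on \eqref{equ:q11} with $k=1$, differentiating the $t|\xi|$-column. Here the two entries of that column must be treated by the two different recurrences: the positive-order entry $J_{\rho-1}(t|\xi|)$ is lowered to $J_{\rho-2}(t|\xi|)$ by the first recurrence, while the negative-order entry $J_{-(\rho-1)}(t|\xi|)$ must be sent to $J_{-(\rho-2)}(t|\xi|)$ by the second one (its order increases by one), so the order-shifted Bessel functions enter the two rows with opposite signs. After splitting the determinant by column-linearity, the $-\f{\rho-1}{t}$ part again rebuilds $-\f{\rho-1}{t}\,\p_t\Psi_j$, and the remaining $2\times2$ determinant, once $|\xi|$ is factored out and both determinants are expanded, coincides with the one displayed in \eqref{l-3}/\eqref{l-4}. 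Equivalently, and more in the spirit of the derivation of \eqref{equ:q11} from \eqref{equ:q9}, one can instead deduce \eqref{l-3}--\eqref{l-4} from \eqref{l-1}--\eqref{l-2} by substituting the identities \eqref{equ:q10}: since the two Hankel orders occurring in \eqref{l-1}--\eqref{l-2} differ by an integer, the cross terms in the resulting expansion carry a factor of the form $\sin(m\pi)=0$ with $m\in\Z$, so the Hankel determinant collapses to a constant multiple of the corresponding Bessel determinant.

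There is no deep obstacle here: Lemma \ref{lem6} is a bookkeeping computation. The step most prone to error is the sign tracking in the Bessel version, where the derivative recurrences used for $J_{\rho-1}$ and $J_{-(\rho-1)}$ differ in the sign in front of the order-shifted functions, so one must check that the sign conventions chosen inside the determinants of \eqref{l-3}--\eqref{l-4} are exactly those for which the two rows combine as described. As a consistency check one may additionally verify the four formulas against the equation $\p_t^2\Psi_j+|\xi|^2\Psi_j+\f{\mu}{t}\p_t\Psi_j=0$ obeyed by $\Psi_j$ as a function of $t$ (recall $2\rho-1=-\mu$).
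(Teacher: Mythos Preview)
Your proposal is correct and is essentially the same computation as the paper's proof: both reduce to applying the cylinder-function recurrence $(H_\nu^{\pm})'(z)=H_{\nu-1}^{\pm}(z)-\f{\nu}{z}H_\nu^{\pm}(z)$ to the $t|\xi|$-column of the first-derivative formula, and for the Bessel versions substituting \eqref{equ:q10} into the resulting Hankel determinant. The only cosmetic difference is that the paper first rewrites $\Psi_j$ as a $\tau$-dependent linear combination of the fundamental solutions $\hat v_\pm(t,\xi)=t^\rho|\xi|^\rho H_\rho^{\pm}(t|\xi|)$ and computes $\p_t^2\hat v_\pm$ before reassembling, whereas you differentiate the already available formula \eqref{equ:q9} for $\p_t\Psi_j$ directly; the two routes are equivalent, and yours is marginally shorter.
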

\begin{proof}
Motivated by \cite[Corollary 2.2]{Wirth-3}, setting $r=t|\xi|$,
then it follows from \eqref{equ:q2} and the assumption of $\hat{v}=r^{\rho} y(r)$ that $y(r)$ satisfies
\begin{equation}\label{equ:q3}
r^{2} y^{\prime \prime}(r)+r y^{\prime}(r)+\left(r^{2}-\rho^{2}\right) y(r)=0.
\end{equation}
Hence, by \eqref{equ:q3}, \eqref{equ:q2} admits two independent solutions
$\text {$\hat{v}_{+}(r)=r^{\rho} H_{\rho}^{+}(r)$ and $\hat{v}_{-}(r)=r^{\rho} H_{\rho}^{-}(r)$}$.
Therefore,
\begin{equation}\label{l-5}
\begin{aligned}
& \Psi_{0}(t, \tau, \xi)=C_{01}(\tau, \xi) \hat{v}_{+}(t, \xi)+C_{02}(\tau, \xi) \hat{v}_{-}(t, \xi), \\
& \Psi_{1}(t, \tau, \xi)=C_{11}(\tau, \xi) \hat{v}_{+}(t, \xi)+C_{12}(\tau, \xi) \hat{v}_{-}(t, \xi),
\end{aligned}
\end{equation}
where
\begin{equation}\label{l-6}
\hat{v}_{+}(t, \xi)=t^{\rho}|\xi|^{\rho} H_{\rho}^{+}(t|\xi|), \quad \hat{v}_{-}(t, \xi)=t^{\rho}|\xi|^{\rho} H_{\rho}^{-}(t|\xi|).
\end{equation}
In addition, it follows from (16)-(19) in \cite{Wirth-3} that
\begin{equation}\label{l-7}
\begin{aligned}
& \left(\begin{array}{cc}
C_{01} & C_{11} \\
C_{02} & C_{12}
\end{array}\right)=\frac{1}{\left|\begin{array}{cc}
\hat{v}_{+}(\tau, \xi) & \hat{v}_{-}(\tau, \xi) \\
\hat{\partial}_{t} \hat{v}_{+}(\tau, \xi) & \partial_{t} \hat{v}_{-}(\tau, \xi)
\end{array}\right|}\left(\begin{array}{cc}
\partial_{t} \hat{v}_{-}(\tau, \xi) & -\hat{v}_{-}(\tau, \xi) \\
-\partial_{t} \hat{v}_{+}(\tau, \xi) & \hat{v}_{+}(\tau, \xi)
\end{array}\right) \\
= & \frac{i \pi}{4 \tau^{2 \rho-1}|\xi|^{2 \rho}}\left(\begin{array}{cc}
\tau^{\rho}|\xi|^{\rho+1} H_{\rho-1}^{-}(\tau|\xi|) & -\tau^{\rho}|\xi|^{\rho} H_{\rho}^{-}(\tau|\xi|) \\
-\tau^{\rho}|\xi|^{\rho+1} H_{\rho-1}^{+}(\tau|\xi|) & \tau^{\rho}|\xi|^{\rho} H_{\rho}^{+}(\tau|\xi|)
\end{array}\right).
\end{aligned}
\end{equation}
By $\nu H_{\nu}^{\pm}(z)+z(H_{\nu}^{\pm})^{\prime}(z)=zH_{\nu-1}^{\pm}(z)$ and \eqref{l-6}, we arrive at
\begin{equation}\label{l-8}
\begin{aligned}
& \partial_{t} \hat{v}_{+}(t, \xi)=t^{\rho}|\xi|^{\rho+1} H_{\rho-1}^{+}(t|\xi|),~ \partial_{t} \hat{v}_{-}(t, \xi)=t^{\rho}|\xi|^{\rho+1} H_{\rho-1}^{-}(t|\xi|),\\
& \partial_{t}^2 \hat{v}_{+}(t, \xi)=t^{\rho}|\xi|^{\rho+2} H_{\rho-2}^{+}(t|\xi|)+t^{\rho-1}|\xi|^{\rho+1} H_{\rho-1}^{+}(t|\xi|),\\ &\partial_{t}^2 \hat{v}_{-}(t, \xi)=t^{\rho}|\xi|^{\rho+2} H_{\rho-2}^{-}(t|\xi|)+t^{\rho-1}|\xi|^{\rho+1} H_{\rho-1}^{-}(t|\xi|).
\end{aligned}
\end{equation}
Substituting \eqref{l-7} and \eqref{l-8} into \eqref{l-5}  yields
\begin{equation}\label{l-9}
\begin{aligned}
\partial_{t}^2 \Psi_{0}(t, \tau, \xi)
&= \frac{i \pi}{4}|\xi|^3 \frac{t^{\rho}}{\tau^{\rho-1}}\left|\begin{array}{ll}
H_{\rho-2}^{+}(t|\xi|) & H_{\rho-1}^{+}(\tau|\xi|) \\
H_{\rho-2}^{-}(t|\xi|) & H_{\rho-1}^{-}(\tau|\xi|)
\end{array}\right|+\frac{i \pi}{4}|\xi|^2 \frac{t^{\rho-1}}{\tau^{\rho-1}}\left|\begin{array}{ll}
H_{\rho-1}^{+}(t|\xi|) & H_{\rho-1}^{+}(\tau|\xi|) \\
H_{\rho-1}^{-}(t|\xi|) & H_{\rho-1}^{-}(\tau|\xi|)
\end{array}\right|\\
& =\frac{i \pi}{4}|\xi|^3 \frac{t^{\rho}}{\tau^{\rho-1}}\left|\begin{array}{ll}
H_{\rho-2}^{+}(t|\xi|) & H_{\rho-1}^{+}(\tau|\xi|) \\
H_{\rho-2}^{-}(t|\xi|) & H_{\rho-1}^{-}(\tau|\xi|)
\end{array}\right|+ t^{-1}\p_t\Psi_{0}(t, \tau, \xi)
\end{aligned}
\end{equation}
and
\begin{equation}\label{l-10}
\begin{aligned}
\partial_{t}^2 \Psi_{1}(t, \tau, \xi)
& =-\frac{i \pi}{4}|\xi|^2 \frac{t^{\rho}}{\tau^{\rho-1}}\left|\begin{array}{ll}
H_{\rho-2}^{+}(t|\xi|) & H_{\rho}^{+}(\tau|\xi|) \\
H_{\rho-2}^{-}(t|\xi|) & H_{\rho}^{-}(\tau|\xi|)
\end{array}\right|+ t^{-1}\p_t\Psi_{1}(t, \tau, \xi).
\end{aligned}
\end{equation}
Since $\rho$ is not an integer, one then has that from \eqref{equ:q10} and \eqref{l-9},
\begin{equation} \label{l-11}
\begin{aligned}
& \frac{i \pi}{4}|\xi|^3 \frac{t^{\rho}}{\tau^{\rho-1}}\left|\begin{array}{ll}
H_{\rho-2}^{+}(t|\xi|) & H_{\rho-1}^{+}(\tau|\xi|) \\
H_{\rho-2}^{-}(t|\xi|) & H_{\rho-1}^{-}(\tau|\xi|)
\end{array}\right|\\
& =\frac{i \pi}{4}|\xi|^3 \frac{t^{\rho}}{\tau^{\rho-1}} \csc ^{2}(\rho \pi)\left(e^{-(\rho-2) \pi i} J_{-(\rho-1)}(\tau|\xi|) J_{\rho-2}(t|\xi|)-J_{-(\rho-1)}(\tau|\xi|) J_{-(\rho-2)}(t|\xi|)\right. \\
& \quad\left.-e^{\pi i} J_{\rho-1}(\tau|\xi|) J_{\rho-2}(t|\xi|)+e^{(\rho-1) \pi i} J_{\rho-1}(\tau|\xi|) J_{-(\rho-2)}(t|\xi|)\right) \\
&\quad -\frac{i \pi}{4}|\xi|^3 \frac{t^{\rho}}{\tau^{\rho-1}} \csc ^{2}(\rho \pi)\left(e^{-(\rho-1) \pi i} J_{\rho-1}(\tau|\xi|) J_{-(\rho-2)}(t|\xi|)-e^{-\pi i} J_{\rho-1}(\tau|\xi|) J_{\rho-2}(t|\xi|)\right. \\
& \quad\left.-J_{-(\rho-1)}(\tau|\xi|) J_{-(\rho-2)}(t|\xi|)+e^{(\rho-2) \pi i} J_{-(\rho-1)}(\tau|\xi|) J_{\rho-2}(t|\xi|)\right) \\
& =\frac{i\pi}{4}|\xi|^3 \frac{t^{\rho}}{\tau^{\rho-1}} \csc ^2 (\rho \pi)\left[-2i\sin(\rho\pi)\cdot\left(J_{\rho-1}(\tau|\xi|)J_{\rho-2}(t|\xi|)+J_{\rho-1}(\tau|\xi|)J_{-(\rho-2)}(t|\xi|)\right)\right]\\
& =\frac{\pi}{2}|\xi|^3 \frac{t^{\rho}}{\tau^{\rho-1}} \csc  (\rho \pi)\left|\begin{array}{cc}
J_{-(\rho-1)}(\tau|\xi|) & J_{-(\rho-2)}(t|\xi|) \\
-J_{\rho-1}(\tau|\xi|) & J_{\rho-2}(t|\xi|)
\end{array}\right|.
\end{aligned}
\end{equation}
Similarly,
\begin{equation}\label{l-12}
-\frac{i \pi}{4}|\xi|^2 \frac{t^{\rho}}{\tau^{\rho-1}}\left|\begin{array}{ll}
H_{\rho-2}^{+}(t|\xi|) & H_{\rho}^{+}(\tau|\xi|) \\
H_{\rho-2}^{-}(t|\xi|) & H_{\rho}^{-}(\tau|\xi|)
\end{array}\right|=\frac{ \pi}{2}|\xi|^2 \frac{t^{\rho}}{\tau^{\rho-1}}\csc(\rho\pi)\left|\begin{array}{ll}
J_{-\rho}(\tau|\xi|) & J_{-(\rho-2)}(t|\xi|) \\
J_{\rho}(\tau|\xi|) & J_{\rho-2}(t|\xi|)
\end{array}\right|.
\end{equation}
Therefore, \eqref{l-3} and \eqref{l-4} follow from \eqref{l-11} and \eqref{l-12}, respectively.
\end{proof}

Let us  recall some useful results on the asymptotic behaviors of $J_{\nu}(z)$ and $H_{\nu}^{\pm}(z)$ ($z>0$)
for sufficiently large $z$ or small $z$. For details,
one can be refered to \cite[Section 10]{OLBC} or \cite[\S3.13, \S3.52, \S10.6 and \S7.2]{Wat}.

\begin{lemma}\label{lem1}
It holds that

(i) for large $z\geq K > 0$,
\begin{equation}\label{equ:q12}
|H_{\nu}^{\pm}(z)|\lesssim z^{-\f{1}{2}};
\end{equation}

for small $z$ with $0<z\leq c<1$,
\begin{equation}\label{equ:q13}
\left|H_{\nu}^{ \pm}(z)\right| \lesssim \begin{cases}z^{-|\nu|}, & \nu \neq 0, \\
-\log z, & \nu=0.
\end{cases}
\end{equation}

(ii) for large $z\geq K > 0$,
\begin{equation}\label{equ:q14}
|J_{\nu}(z)|\lesssim z^{-\f{1}{2}};
\end{equation}

for small $z$ with $0<z\leq c<1$,
\begin{equation}\label{equ:q15}
|J_{\nu}(z)|\lesssim z^{\nu}.
\end{equation}
\end{lemma}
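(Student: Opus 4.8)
The plan is to treat the large-$z$ and small-$z$ regimes separately, in each case invoking the classical series and asymptotic representations of Bessel and Hankel functions collected in \cite[Section 10]{OLBC} and \cite{Wat}. For large $z$ I would use the standard asymptotic expansions
\[
J_{\nu}(z)=\sqrt{\tfrac{2}{\pi z}}\Big(\cos\big(z-\tfrac{\nu\pi}{2}-\tfrac{\pi}{4}\big)+O(z^{-1})\Big),\qquad
H_{\nu}^{\pm}(z)=\sqrt{\tfrac{2}{\pi z}}\,e^{\pm i(z-\frac{\nu\pi}{2}-\frac{\pi}{4})}\big(1+O(z^{-1})\big),
\]
in which the $O(z^{-1})$ remainders are uniform for $z$ bounded away from $0$; since the trigonometric and exponential factors have modulus at most $1$, the right-hand sides are $\lesssim z^{-1/2}$ for $z\ge K$, which yields \eqref{equ:q14} and \eqref{equ:q12}. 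If one prefers to avoid tracking the implicit constant in the remainder, one may instead note that $z\mapsto z^{1/2}|J_{\nu}(z)|$ and $z\mapsto z^{1/2}|H_{\nu}^{\pm}(z)|$ are continuous on $[K,\infty)$ and stay bounded as $z\to\infty$ by the displayed expansions, hence are bounded on all of $[K,\infty)$.

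For small $z$ and the Bessel function $J_{\nu}$, the bound is immediate from the defining power series $J_{\gamma}(z)=(\f{z}{2})^{\gamma}\sum_{k=0}^{\infty}(-1)^{k}\f{z^{2k}}{4^{k}k!\Gamma(\gamma+k+1)}$ recalled just after \eqref{equ:q10}: for $0<z\le c<1$ one has $z^{2k}\le 1$ for every $k$, while $\sum_{k\ge 0}\f{1}{4^{k}k!\,|\Gamma(\nu+k+1)|}$ converges by the ratio test (the consecutive ratio being $\tfrac{1}{4(k+1)|\nu+k+1|}\to 0$), so the series factor is $O(1)$; since $z>0$ and all orders occurring are real, $|(\f{z}{2})^{\nu}|=(\f{z}{2})^{\nu}$, and therefore $|J_{\nu}(z)|\lesssim z^{\nu}$, which is \eqref{equ:q15}.

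For small $z$ and the Hankel functions, when $\nu\notin\Z$ I would insert the bound just obtained for $J_{\pm\nu}$ into the connection formulas \eqref{equ:q10}: since $|\csc(\nu\pi)|$ is a finite constant and $|e^{\mp\nu\pi i}|=1$, this gives $|H_{\nu}^{\pm}(z)|\lesssim |J_{\nu}(z)|+|J_{-\nu}(z)|\lesssim z^{\nu}+z^{-\nu}\lesssim z^{-|\nu|}$ on $0<z\le c<1$, the first case of \eqref{equ:q13}. (This is the only case used later, since $\rho\in(-1,-\f12)$ forces the orders $\rho,\rho-1,\rho-2,-\rho,1-\rho,2-\rho$ arising in Lemma \ref{lem6} and \eqref{equ:q11} to be non-integers; the case $\nu=0$ is recorded only for completeness.) For $\nu=0$ one writes $H_{0}^{\pm}(z)=J_{0}(z)\pm iY_{0}(z)$ and uses the classical small-argument behavior $Y_{0}(z)=\f{2}{\pi}\big(\log\f{z}{2}+\gamma_{E}\big)J_{0}(z)+O(z^{2}\log z)$, with $\gamma_{E}$ the Euler constant, so that $|H_{0}^{\pm}(z)|\lesssim|\log z|=-\log z$ on $0<z\le c<1$, the second case of \eqref{equ:q13}.

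There is essentially no serious obstacle here: every bound is a direct consequence of the classical expansions, and the lemma could in fact be quoted wholesale from \cite{OLBC} or \cite{Wat}. The only mild point of care is to make the large-$z$ estimates genuinely \emph{uniform} in $z\ge K$ rather than merely asymptotic — dealt with either by the explicit remainder in the asymptotic expansion or, more economically, by the continuity-and-boundedness argument above — together with the routine bookkeeping that the $\csc(\nu\pi)$ and $\Gamma$-function factors, which depend on $\nu$ but not on $z$, are absorbed into the implicit constant.
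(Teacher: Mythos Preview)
Your proposal is correct. The paper does not actually prove this lemma: it is presented as a recollection of known facts with a reference to \cite[Section 10]{OLBC} and \cite[\S3.13, \S3.52, \S10.6, \S7.2]{Wat}, and no argument is given. Your sketch correctly supplies what those citations contain --- the standard large-$z$ asymptotics for \eqref{equ:q12} and \eqref{equ:q14}, the defining power series for \eqref{equ:q15}, and the connection formula \eqref{equ:q10} together with the $Y_0$ small-argument expansion for \eqref{equ:q13} --- so your write-up is in fact more detailed than the paper's own treatment.
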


\section{Estimates of solution to 2-D homogeneous equation $\square v+\f{\mu}{t}\,\p_tv=0$}\label{sec3}

Let $v$ solve
\begin{equation}\label{equ:q16}
\left\{ \enspace
\begin{aligned}
&\partial_t^2 v-\Delta v +\f{\mu}{t}\,\p_tv=0, &&
t\geq 1,\\
&v(1,x)=v_0(x), \quad \partial_{t} v(1,x)=v_1(x), &&x\in\R^2,
\end{aligned}
\right.
\end{equation}
where $2<\mu<3$ and $v_i(x)\in C_0^{\infty}(B(0, 1))$ $(i=0, 1)$.
\begin{lemma}\label{lem2}
For any $\varepsilon_1\in(0, 1)$, there exists a constant $\delta(\varepsilon_1)=\f{2\ve_1}{1+\varepsilon_1}>0$ such that
\begin{equation}\label{equ:q17}
\|v(t, \cdot)\|_{Z, 1, 2} \lesssim t^{-\f{\mu}{2}}\left\|v_0\right\|_{Z, 1, 2}
+t^{\delta(\varepsilon_1)-1}\left\|v_1\right\|_{Z, 1,(1+\varepsilon_1, 2)}
+t^{\delta(\varepsilon_1)-1}\left\|v_0+v_1\right\|_{Z, 1,(1+\varepsilon_1, 2)}.
\end{equation}
\end{lemma}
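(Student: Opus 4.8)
We plan to prove \eqref{equ:q17} by Fourier analysis, starting from the representation \eqref{equ:q4} with $\tau=1$, i.e. $\hat v(t,\xi)=\Psi_0(t,1,\xi)\hat v_0(\xi)+\Psi_1(t,1,\xi)\hat v_1(\xi)$. Since Plancherel's theorem identifies $\|\cdot\|_{(2,2)}$ with $\|\cdot\|_{L^2(\R^2)}$, it suffices to bound $\|\widehat{Z^\al v}(t,\cdot)\|_{L^2_\xi}$ for $|\al|\le1$. Because the multipliers $\Psi_j$ and $\p_t^k\Psi_j$ (see \eqref{equ:q9}, \eqref{equ:q11}, Lemma \ref{lem6}) depend only on $(t,|\xi|)$, the rotation passes onto the data, $\widehat{\Omega_{12}v}=\Psi_0\widehat{\Omega_{12}v_0}+\Psi_1\widehat{\Omega_{12}v_1}$, $\p_jv$ inserts the factor $i\xi_j$, and $\p_tv$ replaces $\Psi_j$ by $\p_t\Psi_j$. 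To treat $L_0,L_j$ we first record the identities
\[ \p_\tau\Psi_0(t,\tau,\xi)=|\xi|^2\Psi_1(t,\tau,\xi),\qquad \p_\tau\Psi_1(t,\tau,\xi)=\f{\mu}{\tau}\Psi_1(t,\tau,\xi)-\Psi_0(t,\tau,\xi), \]
obtained by differentiating $\hat v(t)=\Psi_0(t,\tau)\hat v(\tau)+\Psi_1(t,\tau)\p_t\hat v(\tau)$ in $\tau$ and inserting the equation in \eqref{equ:q2}; combined with the scaling relations $\Psi_0(\la t,\la\tau,\xi/\la)=\Psi_0(t,\tau,\xi)$ and $\Psi_1(\la t,\la\tau,\xi/\la)=\la\,\Psi_1(t,\tau,\xi)$ they yield $t\p_t\Psi_0-\xi\cdot\na_\xi\Psi_0=-|\xi|^2\Psi_1$ and show that $t\p_t\Psi_1-\xi\cdot\na_\xi\Psi_1$ is a fixed linear combination of $\Psi_0,\Psi_1$. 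Using $\widehat{x\cdot\na f}=-(2+\xi\cdot\na_\xi)\hat f$ and $\widehat{x_jf}=i\p_{\xi_j}\hat f$, the a priori dangerous $t\p_t$–terms in $\widehat{L_0v}$ and $\widehat{L_jv}$ thereby cancel, and every $\widehat{Z^\al v}$ ($|\al|\le1$) reduces to a finite sum of terms $\Phi(t,1,\xi)\,\widehat{Wv_k}(\xi)$, with $\Phi$ drawn from a fixed finite list of multipliers built from $\Psi_0,\Psi_1$ and one $t$– or $|\xi|$–derivative (such as $\p_t\Psi_j$, $|\xi|\Psi_j$, $|\xi|^2\Psi_1$, $\xi_j\Psi_j$, $\xi_j|\xi|^{-2}\p_t\Psi_j$), and $W$ the identity, a first–order vector field, or multiplication by $x_j$; since $\operatorname{supp}v_k\subset B(0,1)$ one has $\|Wv_k\|_{(2,2)}\lesssim\|v_k\|_{Z,1,2}$ and $\|Wv_k\|_{(1+\ve_1,2)}\lesssim\|v_k\|_{Z,1,(1+\ve_1,2)}$, and in the first–order case the multiplier $\Phi$ always carries a factor $|\xi|^{-1}$ on $\{|\xi|\ge1\}$.

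It then remains to estimate $\|\Phi(t,1,\cdot)\widehat{Wv_k}\|_{L^2_\xi}$, which we do by splitting $\R^2_\xi=A_1\cup A_2\cup A_3$ and applying Lemma \ref{lem1}. On $A_1=\{|\xi|\ge1\}$ the large–argument bounds \eqref{equ:q12}, \eqref{equ:q14} give (with $\rho=-\f{\mu-1}{2}$, so $\rho-\f12=-\f\mu2$) $|\Psi_0|+|\p_t\Psi_1|\lesssim t^{-\mu/2}$, $|\p_t\Psi_0|+|\xi|\,|\Psi_0|+|\xi|^2|\Psi_1|\lesssim|\xi|t^{-\mu/2}$, $|\Psi_1|\lesssim|\xi|^{-1}t^{-\mu/2}$. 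Hence every $v_0$–piece is controlled by Plancherel alone by $t^{-\mu/2}(\|v_0\|_{L^2}+\|\na v_0\|_{L^2})\lesssim t^{-\mu/2}\|v_0\|_{Z,1,2}$. For a $v_1$–piece whose multiplier carries a factor $|\xi|^{-1}$ we use Hölder $\|\Phi\widehat{Wv_1}\|_{L^2(A_1)}\le\|\Phi\|_{L^a(A_1)}\|\widehat{Wv_1}\|_{L^b}$ with $a=\f{2(1+\ve_1)}{1-\ve_1}>2$, $b=(1+\ve_1)'\ge2$ (so $\f1a+\f1b=\f12$), together with $\int_{|\xi|\ge1}|\xi|^{-a}d\xi<\infty$, Hausdorff–Young $\|\widehat{Wv_1}\|_{L^b}\lesssim\|Wv_1\|_{L^{1+\ve_1}}$, and the elementary embedding $\|g\|_{L^{1+\ve_1}(\R^2)}\lesssim\|g\|_{(1+\ve_1,2)}$ (valid because $1+\ve_1\le2$ and $S^1$ has finite measure), obtaining $t^{-\mu/2}\|v_1\|_{Z,1,(1+\ve_1,2)}$; for the remaining $v_1$–pieces (multiplier $\lesssim t^{-\mu/2}$, $0$th–order data) we use Plancherel and the Sobolev–type embedding $\|v_1\|_{L^2}\lesssim\|v_1\|_{Z,1,(1+\ve_1,2)}$, which holds since $W^{1,1+\ve_1}(\R^2)\hookrightarrow L^{2(1+\ve_1)/(1-\ve_1)}(\R^2)\hookrightarrow L^2(B(0,1))$. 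As $\mu>2$ gives $t^{-\mu/2}\le t^{\delta(\ve_1)-1}$, all $A_1$–contributions are $\lesssim t^{-\mu/2}\|v_0\|_{Z,1,2}+t^{\delta(\ve_1)-1}\|v_1\|_{Z,1,(1+\ve_1,2)}$.

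On $A_2=\{\f1t\le|\xi|\le1\}$ we pair \eqref{equ:q15} for the argument $|\xi|$ with \eqref{equ:q14} for the large argument $t|\xi|$; keeping the dominant negative–order Bessel factors gives $|\Psi_0|,|\Psi_1|\lesssim t^{-\mu/2}|\xi|^{-\mu/2}$, the extra $|\xi|$– or $\p_t$–factors only lowering the $|\xi|$–power. Since $\mu>2$ and $a>2$ force $\f{a\mu}{2}>2$, the annular integral yields $\|\Phi\|_{L^a(A_2)}\lesssim t^{-2/a}=t^{\delta(\ve_1)-1}$ (note $2/a=\f{1-\ve_1}{1+\ve_1}=1-\delta(\ve_1)$); and on $A_3=\{t|\xi|\le1\}$ the small–argument bounds give $|\Phi|\lesssim1$ while $|A_3|\sim t^{-2}$, so again $\|\Phi\|_{L^a(A_3)}\lesssim t^{-2/a}=t^{\delta(\ve_1)-1}$. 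Hölder with the exponents $(a,b)$, Hausdorff–Young, and the embedding $\|\cdot\|_{L^{1+\ve_1}(\R^2)}\lesssim\|\cdot\|_{(1+\ve_1,2)}$ then bound all $A_2\cup A_3$–contributions by $t^{\delta(\ve_1)-1}\bigl(\|v_0\|_{Z,1,(1+\ve_1,2)}+\|v_1\|_{Z,1,(1+\ve_1,2)}\bigr)$. Adding the three regions and using $\|v_0\|_{Z,1,(1+\ve_1,2)}\le\|v_0+v_1\|_{Z,1,(1+\ve_1,2)}+\|v_1\|_{Z,1,(1+\ve_1,2)}$ gives \eqref{equ:q17}.

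I expect the hardest step to be the vector–field reduction in the first paragraph: rewriting $\widehat{L_0v},\widehat{L_jv}$ as admissible sums $\sum\Phi\,\widehat{Wv_k}$ hinges on the $\p_\tau\Psi_j$–identities (equivalently, on the equation in \eqref{equ:q2}) to cancel the $t\p_t$–terms, since a crude estimate of $t\p_t\Psi_j$ on $A_1$ loses the factor $t$ and only produces the decay $t^{1-\mu/2}$, which is too weak; one must then track, in all three frequency regions and for every multiplier that arises, the exact powers of $t$ and $|\xi|$ furnished by Lemma \ref{lem1}. The hypothesis $\mu>2$ is used decisively twice: to ensure $t^{-\mu/2}\le t^{\delta(\ve_1)-1}$, and to guarantee $\f{a\mu}{2}>2$, which is precisely what makes $\|\Phi\|_{L^a_\xi(A_2)}$ decay at the rate $t^{\delta(\ve_1)-1}$.
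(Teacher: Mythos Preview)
Your approach is correct in outline and genuinely different from the paper's in two respects.

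\textbf{How the paper differs.} For the $v_1$--contributions the paper does \emph{not} use Hausdorff--Young. Instead (see \eqref{equ:q22}--\eqref{equ:q23}) it rescales $\xi\mapsto\eta/t$, $x\mapsto ty$, recognizes the resulting quantity as an $H^{-1}$ norm, and bounds that by duality and the embedding $H^1\hookrightarrow L^{q'}$; this yields the same power $t^{\delta(\ve_1)-1}$. For the vector fields the paper makes \emph{no} attempt at cancellation: it writes, e.g., $\|L_0v\|_{L^2}\lesssim\|\hat v\|_{L^2}+\|t\p_t\hat v\|_{L^2}+\|\xi\!\cdot\!\na_\xi\hat v\|_{L^2}$ and, crucially, absorbs the apparently lost factor $t$ on $A_1$ into the right--hand side norm via $\|t|\xi|\hat v_0\|_{L^2}\lesssim\|L_1v_0\|_{L^2}+\|L_2v_0\|_{L^2}$ (see \eqref{equ:q46}, \eqref{equ:q65}); in other words the paper reads $\|v_0\|_{Z,1,2}$ as the time--$t$ norm, so that $L_jv_0=t\p_jv_0$ already contains the extra $t$. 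For $L_j$ and $\Omega_{12}$ it further uses finite propagation speed $|x|\lesssim t$ to reduce to the same building blocks.

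\textbf{What your route buys and where it needs care.} Your Hölder/Hausdorff--Young argument is more direct than the $H^{-1}$ duality step and gives the same exponent since $2/a=1-\delta(\ve_1)$. Your cancellation programme for $L_0$ via the scaling relation $(t\p_t-\xi\!\cdot\!\na_\xi)\Psi_0=-|\xi|^2\Psi_1$ is clean and avoids the paper's reliance on a time--dependent data norm. For $L_j$, however, your sentence ``the a priori dangerous $t\p_t$--terms \dots\ thereby cancel'' is not accurate as written: $\widehat{L_jv}=it\xi_j\hat v+i\p_{\xi_j}\p_t\hat v$ contains no $t\p_t$, and the scaling/adjoint identities alone do not kill the growing pieces. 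The cancellation is real but comes from the Bessel recurrence $H_{\nu+1}+H_{\nu-1}=\tfrac{2\nu}{z}H_\nu$: combining the leading parts of $it\xi_j\Psi_0$ and $i\p_{\xi_j}\p_t\Psi_0$ one finds their sum equals $2i(\rho-1)\xi_j|\xi|^{-2}\p_t\Psi_0$ plus terms that are already $O(t^{-\mu/2})$ on $A_1$. That you list $\xi_j|\xi|^{-2}\p_t\Psi_j$ among your multipliers suggests you have seen this; just make the mechanism explicit, since it is a different identity from the one you invoke.
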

\begin{proof}
By the definition of $\|v\|_{Z, s,(p, q)}$ with $s=1$ and $p=q=2$,
we firstly establish
\begin{equation}\label{equ:q18}
\|v(t, \cdot)\|_{L^2(\R^2)} \lesssim t^{-\f{\mu}{2}}\left\|v_0\right\|_{L^2(\R^2)}+t^{\delta(\varepsilon_1)-1}\left\|v_1\right\|_{(1+\varepsilon_1, 2)}+t^{\delta(\varepsilon_1)-1}\|v_0+v_1\|_{(1+\varepsilon_1, 2)}.
\end{equation}
Note that
$\hat{v}(t, \xi)=\Psi_{0}(t, 1, \xi) \hat{v}_{0}(\xi)+\Psi_{1}(t, 1, \xi) \hat{v}_{1}(\xi)$.
In order to use the asymptotic behaviors of Bessel functions to estimate $v(t, x)$,
we will decompose the frequency $\xi\in\Bbb R^2$  into the following three  zones
\begin{equation}\label{equ:q19}
A_1=\{\xi: |\xi|\ge 1\}, \quad A_2=\{\xi: |\xi| \leq 1 \leq t|\xi|\}, \quad A_3=\{\xi: t|\xi| \leq 1\}.
\end{equation}

\vskip 0.2 true cm
{\bf Part 1. The analysis in zone $A_1$}
\vskip 0.2 true cm

By \eqref{equ:q9} with $\rho=-\f{\mu-1}{2}$ and \eqref{equ:q12}, one can obtain
\begin{equation}\label{equ:q20}
\begin{aligned}
& \left|\Psi_0(t, 1,\xi)\right| \lesssim|\xi| t^\rho|\xi|^{-\frac{1}{2}}(t|\xi|)^{-\frac{1}{2}}=t^{-\frac{\mu}{2}}, \\
& \left|\Psi_1(t, 1, \xi)\right|  \lesssim t^\rho|\xi|^{-\frac{1}{2}}(t|\xi|)^{-\frac{1}{2}}=t^{-\frac{\mu}{2}}|\xi|^{-1}.
\end{aligned}
\end{equation}
Then
\begin{equation}\label{equ:q21}
\begin{aligned}
\left\|\hat{v}(t, \cdot)\right\|_{L^2(A_1)} & \leq\left\|  \Psi_{0}(t, 1, \xi)\right\|_{L^{\infty}}\left\|\hat{v}_{0}\right\|_{L^{2}}+\left\||\Psi_{1 }(t, 1, \xi)|\hat{v}_{1}\right\|_{L^{2}} \\
& \lesssim t^{-\frac{\mu}{2}}\left\|v_{0}\right\|_{L^{2}}+t^{-\f{\mu}{2}} \|(1+|\xi|^2)^{-\f{1}{2}}\hat{v}_{1}\|_{L^{2}}
:= t^{-\frac{\mu}{2}}\left\|v_{0}\right\|_{L^{2}}+ I.
\end{aligned}
\end{equation}
Next we treat the term $I$ in \eqref{equ:q21}.  By setting $x=ty$ and $\xi=\f{\eta}{t}$, one has that for $2<\mu<3$ and $t\geq1$,
\begin{equation}\label{equ:q22}
\begin{aligned}
I & =t^{-\f{\mu}{2}} \left(\int_{R^2} \frac{1}{1+|\xi|^2}\big(\int_{R^2} e^{-i x \cdot \xi} v_1(x) \mathrm{d} x\big)^2 \mathrm{d}\xi\right)^{\frac{1}{2}}\\
& =t^{-\f{\mu}{2}+2}\left(\int_{R^2} \frac{1}{t^2+|\eta|^2}\big(\int_{R^2} e^{-i y \cdot \eta} v_1(ty) \mathrm{d} y\big)^2 \mathrm{d} \eta\right)^{\frac{1}{2}}\\
& \leq t^{-\f{\mu}{2}+2}\left(\int_{R^2} \frac{1}{1+|\eta|^2}\big(\int_{R^2} e^{-i y \cdot \eta} v_1(ty) \mathrm{d} y\big)^2 \mathrm{d} \eta\right)^{\frac{1}{2}}\\
& \lesssim t\|v_1(ty)\|_{H^{-1}(\R^2)}.
\end{aligned}
\end{equation}
Due to $\|V_1\|_{H^{-1}}=\sup _{v \in H^1, v \neq 0} \frac{\left|\int_{\mathbb{R}^2} V_1(y) v(y) \mathrm{d} y\right|}{\|v\|_{H^1}}$
for $V_1(y)=v_1(ty)$, then it follows from H\"{o}lder's inequality and Sobolev imbedding theorem that
\begin{equation}\label{equ:q23}
\|V_1 v\|_{L^1} \lesssim\|V_1\|_{(q, 2)}\|v\|_{\left(q^{\prime}, 2\right)} \lesssim\|V_1\|_{(q, 2)}\|v\|_{q^{\prime}} \lesssim\|V_1\|_{(q, 2)}\|v\|_{H^1}\lesssim t^{-\f{2}{q}}\|v_1\|_{(q, 2)}\|v\|_{H^1},
\end{equation}
where $q=1+\varepsilon_1$ with any fixed number $\varepsilon_1\in(0,1)$ and $\f{1}{q}+\f{1}{q'}=1$. Hence,
$$
\|V_1\|_{H^{-1}}\lesssim t^{-\f{2}{1+\varepsilon_1}}\|v_1\|_{(1+\varepsilon_1, 2)}.
$$
This, together with \eqref{equ:q22}, yields
$$
I\lesssim t^{1-\f{2}{1+\varepsilon_1}}\|v_1\|_{(1+\varepsilon_1, 2)}
$$
and
\begin{equation}\label{equ:q29}
\left\|\hat{v}(t, \cdot)\right\|_{L^2(A_1)}\lesssim t^{-\frac{\mu}{2}}\left\|v_{0}\right\|_{L^{2}}+ t^{1-\f{2}{1+\varepsilon_1}}\|v_1\|_{(1+\varepsilon_1, 2)}.
\end{equation}

\vskip 0.2 true cm
{\bf Part 2. The analysis in zone $A_2$}
\vskip 0.2 true cm

For $\rho=-\f{\mu-1}{2}\in(-1,-\f{1}{2})$, it follows from \eqref{equ:q11}, \eqref{equ:q14} and \eqref{equ:q15} that
\begin{equation}\label{equ:q28}
\begin{aligned}
& \left|\Psi_0(t, 1,\xi)\right| \lesssim|\xi| t^\rho|\xi|^{\rho-1}(t|\xi|)^{-\frac{1}{2}}=t^{-\frac{\mu}{2}}|\xi|^{-\frac{\mu}{2}} ,\\
& \left|\Psi_1(t, 1, \xi)\right|  \lesssim t^\rho|\xi|^{\rho}(t|\xi|)^{-\frac{1}{2}}=t^{-\frac{\mu}{2}}|\xi|^{-\frac{\mu}{2}}.
\end{aligned}
\end{equation}
Due to $t|\xi|\geq 1$, then one can utilize
$(t|\xi|)^{-\f{\mu}{2}}\lesssim (1+t|\xi|)^{-1}$,
$x=ty$ and $t\xi=\eta$ to obtain
\begin{equation}\label{equ:q25}
\begin{aligned}
\left\|\hat{v}(t, \cdot)\right\|_{L^2(A_2)} & \leq\left\|(1+t|\xi|)^{-1}(\hat{v}_{0}+\hat{v}_{1})\right\|_{L^{2}} \\
& =t\left(\int_{R^2} (\frac{1}{1+|\eta|})^2\big(\int_{R^2} e^{-i y \cdot \eta} (v_0+v_1)(ty) \mathrm{d} y\big)^2 \mathrm{d} \eta\right)^{\frac{1}{2}}\\
& \lesssim t\|(v_0+v_1)(ty)\|_{H^{-1}(\R^2)}.
\end{aligned}
\end{equation}
Analogously to the treatment of \eqref{equ:q23}, we then have
\begin{equation}\label{equ:q30}
\left\|\hat{v}(t, \cdot)\right\|_{L^2(A_2)}\lesssim t^{1-\f{2}{1+\varepsilon_1}}\|v_0+v_1\|_{(1+\varepsilon_1, 2)}.
\end{equation}

\vskip 0.2 true cm
{\bf Part 3. The analysis in zone $A_3$}
\vskip 0.2 true cm

As in the zone $A_2$, one has
\begin{equation}\label{equ:q26}
\begin{aligned}
& \left|\Psi_0(t, 1,\xi)\right| \lesssim|\xi| t^\rho|\xi|^{\rho-1}(t|\xi|)^{-\rho}=1, \\
& \left|\Psi_1(t, 1, \xi)\right|  \lesssim t^\rho|\xi|^{\rho}(t|\xi|)^{-\rho}=1.
\end{aligned}
\end{equation}
Then for $|\xi|\leq t|\xi| \leq 1$, as in \eqref{equ:q25}, we arrive at
\begin{equation}\label{equ:q27}
\begin{aligned}
\left\|\hat{v}(t, \cdot)\right\|_{L^2(A_3)} & \leq\left\|(1+t|\xi|)\times(1+t|\xi|)^{-1}(\hat{v}_{0}+\hat{v}_{1})\right\|_{L^{2}} \\
&\lesssim\left\|(1+t|\xi|)^{-1}(\hat{v}_{0}+\hat{v}_{1})\right\|_{L^{2}}\\
& \lesssim t\|(v_0+v_1)(ty)\|_{H^{-1}(\R^2)}\\
&\lesssim t^{1-\f{2}{1+\varepsilon_1}}\|v_0+v_1\|_{(1+\varepsilon_1, 2)}.
\end{aligned}
\end{equation}
Collecting \eqref{equ:q29}, \eqref{equ:q30}, \eqref{equ:q27} and Parseval's equality yields \eqref{equ:q18}.

Next we prove \eqref{equ:q17}. By the definition of $\|v\|_{Z, 1,2}$,
the proof procedure will be divided into the following four cases.

\vskip 0.2 true cm

{\bf Case 1.  The treatment  of $\|\p v(t, \cdot)\|_{L^2(\R^2)}$ }

\vskip 0.2 true cm

In this case, we shall establish the following estimate
\begin{equation}\label{equ:q31}
\|\p v(t, \cdot)\|_{L^2(\R^2)} \lesssim t^{-\f{\mu}{2}}\left\|v_0\right\|_{Z, 1, 2}+t^{\delta(\varepsilon_1)-1}\left\|v_1\right\|_{Z, 1,(1+\varepsilon_1, 2)}+t^{\delta(\varepsilon_1)-1}\left\|v_0+v_1\right\|_{Z, 1,(1+\varepsilon_1, 2)}.
\end{equation}
To prove \eqref{equ:q31}, we still
decompose the space $\R_{\xi}^2$ into three zones as in \eqref{equ:q19}.

\vskip 0.2 true cm

{\bf Case 1.1.  The analysis  in zone $A_1$}

\vskip 0.2 true cm

It follows from \eqref{equ:q9} and  \eqref{equ:q12} that
\begin{equation}\label{equ:q32}
\begin{aligned}
& \left|\p_t\Psi_0(t, 1,\xi)\right| \lesssim|\xi|^2 t^\rho|\xi|^{-\frac{1}{2}}(t|\xi|)^{-\frac{1}{2}}=t^{-\frac{\mu}{2}}|\xi|, \\
& \left|\p_t\Psi_1(t, 1, \xi)\right|  \lesssim |\xi|t^\rho|\xi|^{-\frac{1}{2}}(t|\xi|)^{-\frac{1}{2}}=t^{-\frac{\mu}{2}}.
\end{aligned}
\end{equation}
Then for $|\xi| \geq 1$ and $2<\mu<3$, as in \eqref{equ:q25}, one has
\begin{equation}\label{equ:q33}
\begin{aligned}
\left\|\p_t\hat{v}(t, \cdot)\right\|_{L^2(A_1)}
& \lesssim t^{-\frac{\mu}{2}}\left\||\xi|v_{0}\right\|_{L^{2}}+t^{-\f{\mu}{2}} \||\xi|^{-1}|\xi|\hat{v}_{1}\|_{L^{2}}\\
& \lesssim t^{-\frac{\mu}{2}}\left\|v_{0}\right\|_{Z, 1, 2}+t^{-\f{\mu}{2}+1} \|(1+t|\xi|)^{-1}|\xi|\hat{v}_{1}\|_{L^{2}}\\
& \lesssim t^{-\frac{\mu}{2}}\left\|v_{0}\right\|_{Z, 1, 2}+  t^{-\f{\mu}{2}+2}\|\nabla v_1(ty)\|_{H^{-1}}\\
&\lesssim t^{-\frac{\mu}{2}}\left\|v_{0}\right\|_{Z, 1, 2}+  t^{1-\f{2}{1+\varepsilon_1}}\|v_1\|_{Z, 1, (1+\varepsilon_1,2)}.
\end{aligned}
\end{equation}
Analogously to the treatment of $\left\|\p_t\hat{v}(t, \cdot)\right\|_{L^2(A_1)}$, we have that for $j=1, 2$,
\begin{equation}\label{equ:q34}
\left\||\xi_j|\hat{v}(t, \cdot)\right\|_{L^2(A_1)}
\lesssim t^{-\frac{\mu}{2}}\left\|v_{0}\right\|_{Z, 1, 2}+  t^{1-\f{2}{1+\varepsilon_1}}\|v_1\|_{Z, 1, (1+\varepsilon_1,2)}.
\end{equation}
This, together with \eqref{equ:q33}, yields
\begin{equation}\label{equ:q35}
\|\widehat{\p v}\|_{L^2(A_1)}\leq \left\|\p_t\hat{v}(t, \cdot)\right\|_{L^2(A_1)}+\sum_{j=1}^{2}\left\||\xi_j|\hat{v}(t, \cdot)\right\|_{L^2(A_1)} \lesssim t^{-\f{\mu}{2}}\left\|v_0\right\|_{Z, 1, 2}+t^{\delta(\varepsilon_1)-1}\left\|v_1\right\|_{Z, 1,(1+\varepsilon_1, 2)}.
\end{equation}

\vskip 0.2 true cm

{\bf Case 1.2.  The analysis  in zone $A_2$}

\vskip 0.2 true cm

In this case, it follows from  \eqref{equ:q11}, \eqref{equ:q14} and \eqref{equ:q15} that
\begin{equation}\label{equ:q36}
\begin{aligned}
& \left|\p_t\Psi_0(t, 1,\xi)\right| \lesssim|\xi|^2 t^\rho|\xi|^{\rho-1}(t|\xi|)^{-\frac{1}{2}}=t^{-\frac{\mu}{2}}|\xi|^{-\frac{\mu}{2}}|\xi|\leq (t|\xi|)^{-\f{\mu}{2}}, \\
& \left|\p_t\Psi_1(t, 1, \xi)\right|  \lesssim |\xi|t^\rho|\xi|^{\rho}(t|\xi|)^{-\frac{1}{2}}=t^{-\frac{\mu}{2}}|\xi|^{-\frac{\mu}{2}}|\xi|\leq(t|\xi|)^{-\f{\mu}{2}}.
\end{aligned}
\end{equation}
Following \eqref{equ:q25}, by the change of variables $\eta=t\xi$ and $x=ty$, we have
\begin{equation}\label{equ:q37}
\begin{aligned}
\left\|\p_t\hat{v}(t, \cdot)\right\|_{L^2(A_2)} & \leq\left\|(1+t|\xi|)^{-1}(\hat{v}_{0}+\hat{v}_{1})\right\|_{L^{2}} \\
& \lesssim t\|(v_0+v_1)(ty)\|_{H^{-1}(\R^2)}\\
& \lesssim t^{1-\f{2}{1+\varepsilon_1}}\|v_0+v_1\|_{(1+\varepsilon_1, 2)}.
\end{aligned}
\end{equation}
Due to $|\xi_j|\leq|\xi|\leq1 (j=1,2)$, then
\begin{equation}\label{equ:q38}
\left\||\xi_j|\hat{v}(t, \cdot)\right\|_{L^2(A_2)} \lesssim t^{1-\f{2}{1+\varepsilon_1}}\|v_0+v_1\|_{(1+\varepsilon_1, 2)}
\end{equation}
and
\begin{equation}\label{equ:q38}
\|\widehat{\p v}\|_{L^2(A_2)} \lesssim t^{\delta(\varepsilon_1)-1}\left\|v_0+v_1\right\|_{Z, 1,(1+\varepsilon_1, 2)}.
\end{equation}

\vskip 0.2 true cm

{\bf Case 1.3.  The analysis  in zone $A_3$}

\vskip 0.2 true cm

By $t|\xi| \leq 1$, one has from \eqref{equ:q11} and \eqref{equ:q15} that
\begin{equation}\label{equ:q39}
\begin{aligned}
& \left|\p_t\Psi_0(t, 1,\xi)\right| \lesssim|\xi|^2 t^\rho[|\xi|^{-\rho+1}(t|\xi|)^{\rho-1}+|\xi|^{\rho-1}(t|\xi|^{1-\rho})]=|\xi|^2(t^{-\mu}+t)\leq t^{-1}, \\
& \left|\p_t\Psi_1(t, 1, \xi)\right|  \lesssim |\xi|t^\rho[|\xi|^{-\rho}(t|\xi|)^{\rho-1}+|\xi|^{\rho}(t|\xi|)^{1-\rho}]=
t^{-\mu}+t|\xi|^2\leq t^{-1}.
\end{aligned}
\end{equation}
Then
\begin{equation}\label{equ:q40}
\begin{aligned}
\left\|\p_t\hat{v}\right\|_{L^2(A_3)} & \leq t^{-1}\left\|(1+t|\xi|)\times(1+t|\xi|)^{-1}(\hat{v}_{0}+\hat{v}_{1})\right\|_{L^{2}} \\
&\lesssim t^{-1}\left\|(1+t|\xi|)^{-1}(\hat{v}_{0}+\hat{v}_{1})\right\|_{L^{2}}\\
& \lesssim \|(v_0+v_1)(ty)\|_{H^{-1}(\R^2)}\\
&\lesssim t^{-\f{2}{1+\varepsilon_1}}\|v_0+v_1\|_{(1+\varepsilon_1, 2)}
\end{aligned}
\end{equation}
and
\begin{equation}\label{equ:q41}
\left\||\xi_j|\hat{v}(t, \cdot)\right\|_{L^2(A_3)} \lesssim t^{-\f{2}{1+\varepsilon_1}}\|v_0+v_1\|_{(1+\varepsilon_1, 2)}\quad\quad\text{for}~ j=1,2.
\end{equation}
Combining \eqref{equ:q40} and \eqref{equ:q41} yields
\begin{equation}\label{equ:q42}
\|\widehat{\p v}\|_{L^2(A_3)} \lesssim t^{\delta(\varepsilon_1)-1}\left\|v_0+v_1\right\|_{Z, 1,(1+\varepsilon_1, 2)}.
\end{equation}
Therefore, by Cases 1.1-1.3, we have
\begin{equation}\label{equ:q43}
\begin{aligned}
\|\p v(t, \cdot)\|_{L^2(\R^2)}&=\|\widehat{\p v}\|_{L^2(\R^2)} \leq\|\widehat{\p v}\|_{L^2(A_1)}
+\|\widehat{\p v}\|_{L^2(A_2)}+\|\widehat{\p v}\|_{L^2(A_3)}\\
&\lesssim t^{-\f{\mu}{2}}\left\|v_0\right\|_{Z, 1, 2}+t^{\delta(\varepsilon_1)-1}\left\|v_1\right\|_{Z, 1,(1+\varepsilon_1, 2)}+t^{\delta(\varepsilon_1)-1}\left\|v_0+v_1\right\|_{Z, 1,(1+\varepsilon_1, 2)}.
\end{aligned}
\end{equation}

\vskip 0.2 true cm

{\bf Case 2.  The treatment  of $\|L_0v(t, \cdot)\|_{L^2(\R^2)}$ }

\vskip 0.2 true cm

In this case, we start to show
\begin{equation}\label{equ:q44}
\|L_0v(t, \cdot)\|_{L^2(\R^2)} \lesssim t^{-\f{\mu}{2}}\left\|v_0\right\|_{Z, 1, 2}
+t^{\delta(\varepsilon_1)-1}\left\|v_1\right\|_{Z, 1,(1+\varepsilon_1, 2)}
+t^{\delta(\varepsilon_1)-1}\left\|v_0+v_1\right\|_{Z, 1,(1+\varepsilon_1, 2)}.
\end{equation}
Note that
\begin{equation}\label{equ:q45}
\begin{aligned}
\|L_0v(t, \cdot)\|_{L^2(\R^2)}& =\|t\p_t\hat{v}+\widehat{x_1\p_1v}+\widehat{x_2\p_2v}\|_{L^2(\R^2)}\\
&\leq 2\|\hat{v}\|_{L^2(\R^2)}+\|t\p_t\hat{v}\|_{L^2(\R^2)}+\|\xi_1\p_{\xi_1}\hat{v}+\xi_2\p_{\xi_2}\hat{v}\|_{L^2(\R^2)}.
\end{aligned}
\end{equation}
Thus, in order to prove \eqref{equ:q44}, it suffices to treat $\|t\p_t\hat{v}\|_{L^2(\R^2)}$ and $\|\xi_1\p_{\xi_1}\hat{v}+\xi_2\p_{\xi_2}\hat{v}\|_{L^2(\R^2)}$ since the estimate of $\|\hat{v}\|_{L^2(\R^2)}$ has been derived in \eqref{equ:q18}.

For $\|t\p_t\hat{v}\|_{L^2(\R^2)}$, it follows from \eqref{equ:q32} and \eqref{equ:q36} that for $2<\mu<3$,
\begin{equation}\label{equ:q46}
\begin{aligned}
\left\|t\p_t\hat{v}(t, \cdot)\right\|_{L^2(A_1)}
& \lesssim t^{-\frac{\mu}{2}}\left\|t|\xi|\hat{v}_{0}\right\|_{L^{2}}+t^{1-\f{\mu}{2}} \||\xi|^{-1}|\xi|\hat{v}_{1}\|_{L^{2}}\\
& \lesssim t^{-\frac{\mu}{2}}(\|L_1v_0\|_{L^2}+\|L_2v_0\|_{L^2})+ \|(1+t|\xi|)^{-1}t|\xi|\hat{v}_{1}\|_{L^{2}}\\
& \lesssim t^{-\frac{\mu}{2}}\left\|v_{0}\right\|_{Z, 1, 2}+ t^{1-\f{2}{1+\varepsilon_1}}(\|t\p_1v_1\|_{(1+\varepsilon_1, 2)}+\|t\p_2v_1\|_{(1+\varepsilon_1, 2)}) \\
& \lesssim  t^{-\frac{\mu}{2}}\left\|v_{0}\right\|_{Z, 1, 2}+ t^{1-\f{2}{1+\varepsilon_1}}\left\|v_1\right\|_{Z, 1,(1+\varepsilon_1, 2)}
\end{aligned}
\end{equation}
and
\begin{equation}\label{equ:q47}
\begin{aligned}
&\left\|t\p_t\hat{v}(t, \cdot)\right\|_{L^2(A_2)}  \leq\left\|(1+t|\xi|)^{-1}t|\xi|(\hat{v}_{0}+\hat{v}_{1})\right\|_{L^{2}}\\
& \lesssim t^{1-\f{2}{1+\varepsilon_1}}(\|t\p_1(v_0+v_1)+x_1\p_t(v_0+v_1)\|_{(1+\varepsilon_1, 2)}+\|t\p_2(v_0+v_1)+x_2\p_t(v_0+v_1)\|_{(1+\varepsilon_1, 2)})\\
& \lesssim t^{1-\f{2}{1+\varepsilon_1}}\|v_0+v_1\|_{Z,1, (1+\varepsilon_1, 2)}.
\end{aligned}
\end{equation}
Note that from \eqref{equ:q40},
\begin{equation}\label{equ:q48}
\left\|t\p_t\hat{v}(t, \cdot)\right\|_{L^2(A_3)} \lesssim t^{1-\f{2}{1+\varepsilon_1}}\|v_0+v_1\|_{(1+\varepsilon_1, 2)}.
\end{equation}
This, together with \eqref{equ:q46} and \eqref{equ:q47}, derives
\begin{equation}\label{equ:q49}
\|t\p_tv(t, \cdot)\|_{L^2(\R^2)} \lesssim t^{-\f{\mu}{2}}\left\|v_0\right\|_{Z, 1, 2}
+t^{\delta(\varepsilon_1)-1}\left\|v_1\right\|_{Z, 1,(1+\varepsilon_1, 2)}
+t^{\delta(\varepsilon_1)-1}\left\|v_0+v_1\right\|_{Z, 1,(1+\varepsilon_1, 2)}.
\end{equation}

We next treat $\|\xi_1\p_{\xi_1}\hat{v}+\xi_2\p_{\xi_2}\hat{v}\|_{L^2(\R^2)}$
and further assert
\begin{equation}\label{equ:q50}
\|\xi_1\p_{\xi_1}\hat{v}+\xi_2\p_{\xi_2}\hat{v}\|_{L^2(\R^2)} \lesssim t^{-\f{\mu}{2}}\left\|v_0\right\|_{Z, 1, 2}+t^{\delta(\varepsilon_1)-1}\left\|v_1\right\|_{Z, 1,(1+\varepsilon_1, 2)}
+t^{\delta(\varepsilon_1)-1}\left\|v_0+v_1\right\|_{Z, 1,(1+\varepsilon_1, 2)}.
\end{equation}
To prove \eqref{equ:q50}, we still
decompose the space $\R_{\xi}^2$ into three zones as in \eqref{equ:q19}.

\vskip 0.2 true cm

{\bf Case 2.1.  The analysis  in zone $A_1$}

\vskip 0.2 true cm

By \cite[Section 10]{OLBC}, one has
\begin{equation}\label{dr1}
(H_\nu^{\pm})^{\prime}(z)=H_{\nu-1}^{\pm}(z)-\frac{\nu}{z}H_\nu^{\pm},\quad
J_\nu^{\prime}(z)=J_{\nu-1}(z)-\frac{\nu}{z}J_\nu(z).
\end{equation}
It follows from \eqref{equ:q9}, \eqref{dr1} and direct computation that for $j=1,2$,
\begin{equation}\label{equ:q51}
 \begin{aligned}
& |\p_{\xi_j}\Psi_0(t, 1, \xi)|
\lesssim |\frac{\xi_j}{|\xi|} t^\rho H_{\rho-1}^{-}(|\xi|) H_\rho^{+}(t|\xi|)+\xi_j t^\rho\big(H_{\rho-2}^{-}(|\xi|)-\frac{\rho-1}{|\xi|} H_{\rho-1}^{-}(|\xi|)\big) H_\rho^{+}(t|\xi|) \\
&\quad +\xi_j t^{\rho+1}\big(H_{\rho-1}^{+}(t|\xi|)-\frac{\rho}{t|\xi|} H_\rho^{+}(t|\xi|)\big) H_{\rho-1}^{-}(|\xi|)
-\frac{\xi_j}{|\xi|} t^\rho H_{\rho-1}^{+}(|\xi|) H_\rho^{-}(t|\xi|)\\
&\quad -\xi_j t^\rho\big(H_{\rho-2}^{+}(|\xi|)-\frac{\rho-1}{|\xi|} H_{\rho-1}^{+}(|\xi|)\big) H_\rho^{-}(t|\xi|)-\xi_j t^{\rho+1}\big(H_{\rho-1}^{-}(t|\xi|)-\frac{\rho}{t|\xi|} H_\rho^{-}(t|\xi|)\big) H_{\rho-1}^{+}(|\xi|)|\\
& \lesssim \frac{|\xi_j|}{|\xi|} t^\rho H_{\rho-1}^{-}(|\xi|) H_\rho^{+}(t|\xi|)+|\xi_j| t^\rho H_{\rho-2}^{-}(|\xi|) H_\rho^{+}(t|\xi|)+|\xi_j| t^{\rho+1} H_{\rho-1}^{+}(t|\xi|) H_{\rho-1}^{-}(|\xi|) \\
&\quad +\frac{|\xi_j|}{|\xi|} t^\rho H_{\rho-1}^{+}(|\xi|) H_\rho^{-}(t|\xi|)+|\xi_j| t^\rho H_{\rho-2}^{+}(|\xi|) H_\rho^{-}(t|\xi|)+|\xi_j| t^{\rho+1} H_{\rho-1}^{-}(t|\xi|) H_{\rho-1}^{+}(|\xi|)
\end{aligned}
\end{equation}
and
\begin{equation}\label{equ:q52}
\begin{aligned}
&|\p_{\xi_j}\Psi_1(t, 1, \xi)|
\lesssim \frac{|\xi_j|}{|\xi|^2} t^\rho H_{\rho}^{-}(|\xi|) H_\rho^{+}(t|\xi|)+\f{|\xi_j|}{|\xi|} t^\rho H_{\rho-1}^{-}(|\xi|) H_\rho^{+}(t|\xi|)+\f{|\xi_j|}{|\xi|} t^{\rho+1} H_{\rho-1}^{+}(t|\xi|) H_{\rho}^{-}(|\xi|) \\
&\quad +\frac{|\xi_j|}{|\xi|^2} t^\rho H_{\rho}^{+}(|\xi|) H_\rho^{-}(t|\xi|)+\f{|\xi_j|}{|\xi|} t^\rho H_{\rho-1}^{+}(|\xi|) H_\rho^{-}(t|\xi|)+\f{|\xi_j|}{|\xi|} t^{\rho+1} H_{\rho-1}^{-}(t|\xi|) H_{\rho}^{+}(|\xi|).
\end{aligned}
\end{equation}
Therefore, as in \eqref{equ:q33}, it follows from \eqref{equ:q51}-\eqref{equ:q52}, \eqref{equ:q4} and  \eqref{equ:q12} that for $2<\mu<3$,
\begin{equation*}\label{equ:q61}
\begin{aligned}
&\big\|\xi_1\p_{\xi_1}\hat{v}+\xi_2\p_{\xi_2}\hat{v}\big\|_{L^2(A_1)}
\lesssim \big\|t^{-\frac{\mu}{2}}\hat{v}_{0}(\xi)+t^{-\frac{\mu}{2}}t(|\xi_1|+|\xi_2|)\hat{v}_{0}(\xi)\big\|_{L^{2}}+ \big\|t^{-\f{\mu}{2}}(\xi_1\p_{\xi_1}+\xi_2\p_{\xi_2})\hat{v}_{0}(\xi)\big\|_{L^{2}}\\
&\quad +\big\|t^{-\frac{\mu}{2}}(1+|\xi|^2)^{-\f{1}{2}}t(|\xi_1|+|\xi_2|)\hat{v}_{1}(\xi)\big\|_{L^{2}}
+\big\|t^{-\f{\mu}{2}}(1+|\xi|^2)^{-\f{1}{2}}(\xi_1\p_{\xi_1}+\xi_2\p_{\xi_2})\hat{v}_{1}(\xi)\big\|_{L^{2}}\\
\end{aligned}
\end{equation*}

\begin{equation}\label{equ:q61}
\begin{aligned}
& \lesssim t^{-\frac{\mu}{2}}\left\|v_{0}\right\|_{L^{2}}+ t^{-\f{\mu}{2}} \left\|t\p_1v_0\right\|_{L^{2}}+t^{-\f{\mu}{2}} \left\|t\p_2v_0\right\|_{L^{2}}+ t^{-\f{\mu}{2}}\left\|(x_1\p_1+x_2\p_2)v_0\right\|_{L^{2}}+ t^{-\f{\mu}{2}+2-\f{2}{1+\varepsilon_1}}\left\|t\p_1v_1\right\|_{(1+\varepsilon_1, 2)}\\
&\quad +t^{-\f{\mu}{2}+2-\f{2}{1+\varepsilon_1}}\left\|t\p_2v_1\right\|_{(1+\varepsilon_1, 2)} + t^{-\f{\mu}{2}+2-\f{2}{1+\varepsilon_1}}\left\|(x_1\p_1+x_2\p_2)v_1\right\|_{(1+\varepsilon_1, 2)}+t^{-\f{\mu}{2}+2-\f{2}{1+\varepsilon_1}}\left\|v_1\right\|_{(1+\varepsilon_1, 2)}\\
&\lesssim  t^{-\frac{\mu}{2}}\left\|v_{0}\right\|_{Z, 1, 2}+ t^{-\f{\mu}{2}+2-\f{2}{1+\varepsilon_1}}\left\|v_1\right\|_{Z, 1,(1+\varepsilon_1, 2)}\lesssim t^{-\frac{\mu}{2}}\left\|v_{0}\right\|_{Z, 1, 2}+ t^{1-\f{2}{1+\varepsilon_1}}\left\|v_1\right\|_{Z, 1,(1+\varepsilon_1, 2)}.
\end{aligned}
\end{equation}
\vskip 0.2 true cm

{\bf Case 2.2.  The analysis  in zone $A_2$}

\vskip 0.2 true cm

By \eqref{equ:q11} and \eqref{dr1}, as treated in \eqref{equ:q51}-\eqref{equ:q52}, we obtain that for $j=1,2$,
\begin{equation}\label{equ:q55}
\begin{aligned}
& |\p_{\xi_j}\Psi_0(t, 1, \xi)|
\lesssim \frac{|\xi_j|}{|\xi|} t^\rho J_{-\rho+1}(|\xi|) J_\rho(t|\xi|)+\frac{|\xi_j|}{|\xi|} t^\rho J_{\rho-1}(|\xi|) J_{-\rho}(t|\xi|)
+|\xi_j| t^\rho J_{-\rho}(|\xi|) J_\rho(t|\xi|)\\
&\quad +t^{\rho}\f{|\xi_j|}{|\xi|} J_{-\rho+1}(|\xi|) J_{\rho}(t|\xi|) +t^{\rho+1}|\xi_j| J_{\rho-1}(t|\xi|) J_{-\rho+1}(|\xi|)
+t^\rho|\xi_j| J_{\rho-2}(|\xi|) J_{-\rho}(t|\xi|)\\
&\quad +t^{\rho+1}|\xi_j| J_{-\rho-1}(t|\xi|) J_{\rho-1}(|\xi|)+t^{\rho}\f{|\xi_j|}{|\xi|} J_{-\rho}(t|\xi|) J_{\rho-1}(|\xi|)
\end{aligned}
\end{equation}
and
\begin{equation}\label{equ:q56}
\begin{aligned}
&|\p_{\xi_j}\Psi_1(t, 1, \xi)|
\lesssim \frac{|\xi_j|}{|\xi|} t^\rho J_{-\rho-1}(|\xi|) J_\rho(t|\xi|)+\frac{|\xi_j|}{|\xi|} t^{\rho+1} J_{\rho-1}(t|\xi|) J_{-\rho}(|\xi|)+\f{|\xi_j|}{|\xi|} t^\rho J_{\rho-1}(|\xi|) J_{-\rho}(t|\xi|)\\
&\quad +t^{\rho+1}\f{|\xi_j|}{|\xi|} J_{-\rho-1}(t|\xi|) J_{\rho}(|\xi|) +t^{\rho}\f{|\xi_j|}{|\xi|^2} J_{\rho}(t|\xi|) J_{-\rho}(|\xi|)+t^\rho\f{|\xi_j|}{|\xi|^2} J_{-\rho}(|\xi|) J_{\rho}(t|\xi|)\\
&\quad +t^{\rho}\f{|\xi_j|}{|\xi|^2} J_{-\rho}(t|\xi|) J_{\rho}(|\xi|)+t^{\rho}\f{|\xi_j|}{|\xi|^2} J_{-\rho}(t|\xi|) J_{\rho}(|\xi|).
\end{aligned}
\end{equation}
It follows from \eqref{equ:q55}-\eqref{equ:q56} and \eqref{equ:q14}-\eqref{equ:q15} that for $2<\mu<3$,
\begin{equation}\label{equ:q59}
\begin{aligned}
&\left\|\xi_1\p_{\xi_1}\hat{v}+\xi_2\p_{\xi_2}\hat{v}\right\|_{L^2(A_2)}
\lesssim \big\|t^{-\frac{\mu}{2}}\hat{v}_{0}(\xi)\big\|_{L^{2}}
+\big\|(1+t|\xi|)^{-\frac{\mu}{2}}t(|\xi_1|+|\xi_2|)(\hat{v}_{0}(\xi)+\hat{v}_{1}(\xi))\big\|_{L^{2}}\\
&\qquad +\big\|(1+t|\xi|)^{-\f{\mu}{2}}(\xi_1\p_{\xi_1}+\xi_2\p_{\xi_2})(\hat{v}_{0}(\xi)+\hat{v}_{1}(\xi))\big\|_{L^{2}}\\
&\lesssim  t^{-\frac{\mu}{2}}\left\|v_{0}\right\|_{L^{2}}+t^{1-\f{2}{1+\varepsilon_1}}\left\|t\p_1(v_0+v_1)\right\|_{(1+\varepsilon_1, 2)}+t^{1-\f{2}{1+\varepsilon_1}}\left\|t\p_2(v_0+v_1)\right\|_{(1+\varepsilon_1, 2)}\\
&\qquad+ t^{1-\f{2}{1+\varepsilon_1}}\left\|(x_1\p_1+x_2\p_2)(v_0+v_1)\right\|_{(1+\varepsilon_1, 2)}+t^{1-\f{2}{1+\varepsilon_1}}\left\|v_0+v_1\right\|_{(1+\varepsilon_1, 2)}\\
&\lesssim t^{-\frac{\mu}{2}}\left\|v_{0}\right\|_{L^{2}}+ t^{1-\f{2}{1+\varepsilon_1}}\left\|v_0+v_1\right\|_{Z, 1, (1+\varepsilon_1, 2)}.
\end{aligned}
\end{equation}

\vskip 0.2 true cm

{\bf Case 2.3.  The analysis  in zone $A_3$}

\vskip 0.2 true cm

As in the zone $A_2$, by \eqref{equ:q55}-\eqref{equ:q56} and \eqref{equ:q15}, one can obtain that for $t|\xi|\leq 1$,
\begin{equation*}\label{equ:q60}
\begin{aligned}
\left\|\xi_1\p_{\xi_1}\hat{v}+\xi_2\p_{\xi_2}\hat{v}\right\|_{L^2(A_3)}
&\lesssim \big\|t^{-\frac{\mu}{2}}\hat{v}_{0}(\xi)\big\|_{L^{2}}+\big\|(1+t|\xi|)(1+t|\xi|)^{-1}(\hat{v}_{0}(\xi)+\hat{v}_{1}(\xi))\big\|_{L^{2}}\\
&+\big\|(1+t|\xi|)(1+t|\xi|)^{-1}(\xi_1\p_{\xi_1}+\xi_2\p_{\xi_2})(\hat{v}_{0}(\xi)+\hat{v}_{1}(\xi))\big\|_{L^{2}}\\
\end{aligned}
\end{equation*}
\begin{equation}\label{equ:q60}
\begin{aligned}
&\lesssim  t^{-\frac{\mu}{2}}\big\|v_{0}\big\|_{L^{2}}+ t^{1-\f{2}{1+\varepsilon_1}}\big\|v_0+v_1\big\|_{(1+\varepsilon_1, 2)}\\
&\quad+t^{1-\f{2}{1+\varepsilon_1}}\big\|(x_1\p_1+x_2\p_2)(v_0+v_1)\big\|_{(1+\varepsilon_1, 2)}\\
&\lesssim t^{-\frac{\mu}{2}}\big\|v_{0}\big\|_{L^{2}}+ t^{1-\f{2}{1+\varepsilon_1}}\big\|v_0+v_1\big\|_{Z, 1, (1+\varepsilon_1, 2)}.
\end{aligned}
\end{equation}

Therefore, combining \eqref{equ:q61}, \eqref{equ:q59} and \eqref{equ:q60},
the assertion \eqref{equ:q50} and further \eqref{equ:q44} are proved.

\vskip 0.2 true cm

{\bf Case 3.  The treatment  of $\|L_jv(t, \cdot)\|_{L^2(\R^2)}(j=1, 2)$ }

\vskip 0.2 true cm

We assert
\begin{equation}\label{equ:q62}
\|L_jv(t, \cdot)\|_{L^2(\R^2)} \lesssim t^{-\f{\mu}{2}}\left\|v_0\right\|_{Z, 1, 2}+t^{\delta(\varepsilon_1)-1}\left\|v_1\right\|_{Z, 1,(1+\varepsilon_1, 2)}+t^{\delta(\varepsilon_1)-1}\left\|v_0+v_1\right\|_{Z, 1,(1+\varepsilon_1, 2)}.
\end{equation}

To prove \eqref{equ:q62}, we only need to deal with $\|L_1v(t, \cdot)\|_{L^2(\R^2)}$ since the treatment on $\|L_2v(t, \cdot)\|_{L^2(\R^2)}$ is completely analogous. It follows from $t\geq1$ and the support condition of the initial data that $|x|\lesssim t$ holds
for $(t, x)\in \operatorname{supp}v$. Hence,
\begin{equation}\label{equ:q63}
\begin{aligned}
\|L_1v(t, \cdot)\|_{L^2(\R^2)}\leq \|t\p_1v\|_{L^2(\R^2)}+\|x_1\p_tv\|_{L^2(\R^2)}
\lesssim \|t|\xi_1|\hat{v}\|_{L^2(\R^2)}+\|t\p_t\hat{v}\|_{L^2(\R^2)}.
\end{aligned}
\end{equation}
Note that $\|t\p_t\hat{v}\|_{L^2(\R^2)}$ can be estimated as in \eqref{equ:q49},
it suffices only to treat $\|t|\xi_1|\hat{v}\|_{L^2(\R^2)}$.

It follows from  \eqref{equ:q9} and  \eqref{equ:q12} that for $|\xi|\geq1$,
$$
\begin{aligned}
& \left|\Psi_0(t, 1,\xi)\right| \lesssim t^{-\frac{\mu}{2}}, \\
& \left|\Psi_1(t, 1, \xi)\right|  \lesssim t^{-\frac{\mu}{2}}|\xi|^{-1}.
\end{aligned}
$$
Then
\begin{equation}\label{equ:q65}
\begin{aligned}
\left\|t|\xi_1|\hat{v}(t, \cdot)\right\|_{L^2(A_1)} & \lesssim t^{-\frac{\mu}{2}}\left\|t|\xi_1|\hat{v}_{0}\right\|_{L^{2}}+t^{-\f{\mu}{2}} \|(1+|\xi|^2)^{-\f{1}{2}}t|\xi_1|\hat{v}_{1}\|_{L^{2}}\\
& \lesssim t^{-\frac{\mu}{2}}\left\|t\p_1v_{0}\right\|_{L^{2}}+ t^{-\f{\mu}{2}+2-\f{2}{1+\varepsilon_1}}\left\|t\p_1v_{1}\right\|_{(1+\varepsilon_1, 2)}\\
& \lesssim t^{-\frac{\mu}{2}}\left\|v_{0}\right\|_{Z, 1, 2}+ t^{1-\f{2}{1+\varepsilon_1}}\left\|v_{1}\right\|_{Z, 1, (1+\varepsilon_1, 2)}.
\end{aligned}
\end{equation}
On the other hand, for $|\xi|\leq1$ and $t|\xi|\geq1$, it follows from \eqref{equ:q11}, \eqref{equ:q14} and \eqref{equ:q15} that for $2<\mu<3$,
$$
\begin{aligned}
& \left|\Psi_0(t, 1,\xi)\right| \lesssim t^{-\frac{\mu}{2}}|\xi|^{-\frac{\mu}{2}}\lesssim (1+t|\xi|)^{-1} ,\\
& \left|\Psi_1(t, 1, \xi)\right|  \lesssim t^{-\frac{\mu}{2}}|\xi|^{-\frac{\mu}{2}}\lesssim (1+t|\xi|)^{-1}.
\end{aligned}
$$
Then
\begin{equation}\label{equ:q67}
\begin{aligned}
\left\|t|\xi_1|\hat{v}(t, \cdot)\right\|_{L^2(A_2)}&\lesssim  \|(1+t|\xi|)^{-1}t|\xi_1|(\hat{v}_{0}+\hat{v}_{1})\|_{L^{2}}\\
& \lesssim t^{1-\f{2}{1+\varepsilon_1}}\left\|t\p_1(v_0+v_{1})\right\|_{(1+\varepsilon_1, 2)}\\
& \lesssim  t^{1-\f{2}{1+\varepsilon_1}}\left\|(v_0+v_{1})\right\|_{Z, 1, (1+\varepsilon_1, 2)}.
\end{aligned}
\end{equation}
For $t|\xi|\leq1$, by \eqref{equ:q11} and \eqref{equ:q14}, we have
$$
\begin{aligned}
& \left|\Psi_0(t, 1,\xi)\right| \lesssim 1= (1+t|\xi|)(1+t|\xi|)^{-1}\lesssim  (1+t|\xi|)^{-1},\\
& \left|\Psi_1(t, 1, \xi)\right|  \lesssim 1= (1+t|\xi|)(1+t|\xi|)^{-1}\lesssim  (1+t|\xi|)^{-1}.
\end{aligned}
$$
Then
\begin{equation}\label{equ:q68}
\begin{aligned}
\left\|t|\xi_1|\hat{v}(t, \cdot)\right\|_{L^2(A_3)}\lesssim  \|(1+t|\xi|)^{-1}(\hat{v}_{0}+\hat{v}_{1})\|_{L^{2}}
\lesssim t^{1-\f{2}{1+\varepsilon_1}}\left\|v_0+v_{1}\right\|_{(1+\varepsilon_1, 2)}.
\end{aligned}
\end{equation}

Therefore, combining \eqref{equ:q65}, \eqref{equ:q67} and \eqref{equ:q68} gives
\begin{equation}\label{equ:q69}
\|t|\xi_1|\hat{v}(t, \cdot)\|_{L^2(\R^2)} \lesssim t^{-\f{\mu}{2}}\left\|v_0\right\|_{Z, 1, 2}
+t^{\delta(\varepsilon_1)-1}\left\|v_1\right\|_{Z, 1,(1+\varepsilon_1, 2)}
+t^{\delta(\varepsilon_1)-1}\left\|v_0+v_1\right\|_{Z, 1,(1+\varepsilon_1, 2)}.
\end{equation}
Together with \eqref{equ:q49}, this yields \eqref{equ:q62} immediately.
\vskip 0.2 true cm

{\bf Case 4.  The treatment of $\|\Omega_{12}v(t, \cdot)\|_{L^2(\R^2)}$ }

\vskip 0.2 true cm
We now start to prove
\begin{equation}\label{equ:q70}
\|\Omega_{12}v(t, \cdot)\|_{L^2(\R^2)} \lesssim t^{-\f{\mu}{2}}\left\|v_0\right\|_{Z, 1, 2}+t^{\delta(\varepsilon_1)-1}\left\|v_1\right\|_{Z, 1,(1+\varepsilon_1, 2)}+t^{\delta(\varepsilon_1)-1}\left\|v_0+v_1\right\|_{Z, 1,(1+\varepsilon_1, 2)}.
\end{equation}

As treated in \eqref{equ:q63}, one has
\begin{equation}\label{equ:q71}
\begin{aligned}
\|\Omega_{12}v(t, \cdot)\|_{L^2(\R^2)}&\leq \|\f{x_1}{t}t\p_2v\|_{L^2(\R^2)}+\|\f{x_2}{t}t\p_1v\|_{L^2(\R^2)}\\
& \lesssim \|t|\xi_2|\hat{v}\|_{L^2(\R^2)}+\|t|\xi_1|\hat{v}\|_{L^2(\R^2)}.\\
\end{aligned}
\end{equation}
Note that by \eqref{equ:q69}, it holds that for $j=1,2$,
$$
\|t|\xi_j|\hat{v}(t, \cdot)\|_{L^2(\R^2)} \lesssim t^{-\f{\mu}{2}}\left\|v_0\right\|_{Z, 1, 2}
+t^{\delta(\varepsilon_1)-1}\left\|v_1\right\|_{Z, 1,(1+\varepsilon_1, 2)}
+t^{\delta(\varepsilon_1)-1}\left\|v_0+v_1\right\|_{Z, 1,(1+\varepsilon_1, 2)}.
$$
Therefore, \eqref{equ:q70} is shown.

Finally, collecting the results in Cases 1-4, Lemma \ref{lem2} is proved.
\end{proof}

Based on some crucial estimates in Lemma \ref{lem2},  we now treat the first order derivative of solution $v$
to \eqref{equ:q16}.
\begin{lemma}\label{lem3}
Let $v$ be the solution to \eqref{equ:q16}. Then for $2<\mu<3$,
\begin{equation}\label{equ:q74}
\|\p v(t, \cdot)\|_{Z, 1, 2} \lesssim t^{-\f{\mu}{2}}\left\|\nabla v_0\right\|_{Z, 1, 2}+t^{-\f{\mu}{2}}\left\| v_1\right\|_{Z, 1, 2}+t^{-1}\left\|v_0+v_1\right\|_{Z, 1,2}.
\end{equation}
\end{lemma}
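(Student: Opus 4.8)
The plan is to follow the same frequency-decomposition strategy used in the proof of Lemma \ref{lem2}, but now applied to the multipliers $\p_t\Psi_j$, $\p_t^2\Psi_j$ and their spatial-frequency derivatives, since $\|\p v\|_{Z,1,2}$ requires controlling $Z^\alpha \p v$ for $|\alpha|\le 1$, i.e.\ terms like $\p_t^2\hat v$, $\xi_k\p_t\hat v$, $t\p_t^2\hat v$, $\xi_j\p_{\xi_j}(\p_t\hat v)$, $\xi_j\p_{\xi_j}(\xi_k\hat v)$, and so on. First I would observe that by the definition of $\|\cdot\|_{Z,1,2}$ it suffices to bound $\|\p_t v\|_{Z,1,2}$ and $\|\p_j v\|_{Z,1,2}$ separately, and each of these reduces via Parseval to estimating $\|\widehat{Z^\alpha \p_t v}\|_{L^2}$ and $\|\widehat{Z^\alpha \p_j v}\|_{L^2}$. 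The commutation identities (the same ones used implicitly in Case 2 of Lemma \ref{lem2}: $L_0$ acting on the Fourier side becomes $t\p_t-\xi\cdot\p_\xi-2$, $L_j$ becomes $-\xi_j t\p_t \cdots$, $\Omega_{12}$ becomes the angular derivative in $\xi$) let me rewrite every needed quantity as a combination of $\p_t^k\hat v$ ($k\le 2$), $\xi^\beta\p_t^k\hat v$, and $\xi$-derivatives thereof, exactly as was done for $L_0 v$ there.

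Next I would, zone by zone ($A_1$, $A_2$, $A_3$), record the pointwise bounds on $\p_t\Psi_j$ (already available from \eqref{equ:q32}, \eqref{equ:q36}, \eqref{equ:q39}), on $\p_t^2\Psi_j$ (obtained from Lemma \ref{lem6}, i.e.\ \eqref{l-1}--\eqref{l-4}, combined with \eqref{equ:q12}--\eqref{equ:q15}: in $A_1$ one gets $|\p_t^2\Psi_0|\lesssim t^{-\mu/2}|\xi|^2$ and $|\p_t^2\Psi_1|\lesssim t^{-\mu/2}|\xi|$; in $A_2$ one gets $(t|\xi|)^{-\mu/2}$-type bounds; in $A_3$ the extra $t^{-1}\p_t\Psi_j$ term and the leading Bessel term combine to give the $t^{-1}$ gain that produces the $t^{-1}\|v_0+v_1\|_{Z,1,2}$ factor in \eqref{equ:q74}), and on the mixed $\xi$-derivatives $\p_{\xi_j}(\p_t\Psi_k)$ (handled as in \eqref{equ:q51}--\eqref{equ:q56} using the recurrence \eqref{dr1}, which only raises the Bessel order by one and costs at most a factor $|\xi|$ or $t$). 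In zone $A_1$ the bound $|\xi|^{-1}\lesssim 1$ lets me absorb the loss of one derivative on $v_1$ directly into $\|v_1\|_{Z,1,2}$ without passing to the dual Sobolev norm, which is why \eqref{equ:q74} has $t^{-\mu/2}\|v_1\|_{Z,1,2}$ rather than the $(1+\ve_1,2)$-norm appearing in Lemma \ref{lem2}; in zones $A_2$ and $A_3$ the multipliers for $\p_t^2$ and $\xi_k\p_t$ behave like $(1+t|\xi|)^{-1}$ up to the decay prefactor, so the same change-of-variables trick ($x=ty$, $\xi=\eta/t$) together with $\|(v_0+v_1)(ty)\|_{L^2}\lesssim t^{-1}\|v_0+v_1\|_{L^2}$ yields the $t^{-1}\|v_0+v_1\|_{Z,1,2}$ term — note that here, unlike Lemma \ref{lem2}, I can use the plain $L^2$ bound rather than the $H^{-1}$ bound because there is no singular factor $|\xi|^{-1}$ to integrate, so no Sobolev-dual estimate is needed and no $\ve_1$ appears.

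Collecting the three zones and the cases $Z^\alpha\in\{\mathrm{id},\p,L_0,L_1,L_2,\Omega_{12}\}$ gives \eqref{equ:q74} after using Parseval and the triangle inequality, together with the elementary observation $t^{-\mu/2}\le t^{-1}$ is \emph{false} for $2<\mu<3$ — rather, one keeps the genuinely different decay rates $t^{-\mu/2}$ (attached to $v_0$ and $v_1$, from $A_1$ and the non-singular parts of $A_2$) and $t^{-1}$ (attached to $v_0+v_1$, from the low-frequency gain in $A_3$ and the $(1+t|\xi|)^{-1}$ parts of $A_2$). The main obstacle I anticipate is the zone-$A_3$ analysis of $\p_t^2\Psi_j$: naively, \eqref{equ:q9} with $k=2$ would involve $H^\pm_{\rho-2}$ with order $\rho-2\in(-3,-5/2)$, giving $|\xi|^{-|\rho-2|}$ which is too singular to integrate against $\hat v_1$; the resolution is precisely Lemma \ref{lem6}, which re-expresses $\p_t^2\Psi_j$ via the determinant of products $J_{\pm(\rho-1)}(\tau|\xi|)J_{\pm(\rho-2)}(t|\xi|)$ (and the harmless $t^{-1}\p_t\Psi_j$ remainder), so that one Bessel factor carries the small argument $|\xi|$ (bounded by $|\xi|^{-\rho+1}=|\xi|^{(\mu+1)/2}$, which is \emph{positive} power) and the other carries $t|\xi|$ — this is the delicate cancellation that makes the low-frequency estimate work, and I would flag it as the crux of the argument.
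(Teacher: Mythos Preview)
Your proposal is correct and follows the paper's proof closely: the same zone decomposition $A_1,A_2,A_3$, the same use of Lemma~\ref{lem6} to control $\p_t^2\Psi_j$, and the same key observation that the extra factor $|\xi|$ coming from differentiation cancels the $|\xi|^{-1}$ singularity of $\Psi_1$, so no $H^{-1}$ duality (and hence no $\varepsilon_1$) is needed. One small clarification: in zones $A_2$ and $A_3$ the paper does not actually use the change-of-variables trick you describe; it writes directly $(t|\xi|)^{-\mu/2}|\xi|=t^{-1}(t|\xi|)^{1-\mu/2}\le t^{-1}$ (valid on $A_2$ since $\mu>2$ and $t|\xi|\ge1$) and then applies Plancherel --- your scaling identity $\|(v_0+v_1)(ty)\|_{L^2}=t^{-1}\|v_0+v_1\|_{L^2}$ by itself produces no decay gain without first extracting the $t^{-1}$ from the multiplier in this way.
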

\begin{proof}
By the definition of $\|\p v\|_{Z, 1, 2}$,  we firstly prove
\begin{equation}\label{equ:q75}
\|\p v(t, \cdot)\|_{L^2} \lesssim t^{-\f{\mu}{2}}\left\|\nabla v_0\right\|_{L^ 2}+t^{-\f{\mu}{2}}\left\| v_1\right\|_{L^ 2}+t^{-1}\left\|v_0+v_1\right\|_{L^ 2}.
\end{equation}

In fact, as in Lemma \ref{lem2}, we decompose the frequency space $\Bbb R_{\xi}^2$ into three zones
$A_1$, $A_2$ and $A_3$. For $|\xi|\geq 1$, it follows from \eqref{equ:q9}, \eqref{equ:q20} and Parseval's equality that
\begin{equation}\label{equ:q76}
\begin{aligned}
\left\||\xi|\hat{v}(t, \cdot)\right\|_{L^2(A_1)} & \leq\left\|  \Psi_{0}(t, 1, \xi)\right\|_{L^{\infty}}\left\||\xi|\hat{v}_{0}\right\|_{L^{2}}+\left\||\Psi_{1 }(t, 1, \xi)||\xi|\hat{v}_{1}\right\|_{L^{2}} \\
& \lesssim t^{-\frac{\mu}{2}}\left\||\xi| \hat{v}_{0}\right\|_{L^{2}}+t^{-\f{\mu}{2}} \|\hat{v}_{1}\|_{L^{2}}\\
& = t^{-\frac{\mu}{2}}\left\|\nabla v_{0}\right\|_{L^{2}}+t^{-\f{\mu}{2}} \|v_{1}\|_{L^{2}}.
\end{aligned}
\end{equation}
For $|\xi|\leq1$ and $t|\xi|\geq1$, by \eqref{equ:q11} and \eqref{equ:q28}, one has
\begin{equation}\label{equ:q77}
\left\||\xi|\hat{v}(t, \cdot)\right\|_{L^2(A_2)}  \lesssim \big\|(t|\xi|)^{-\f{\mu}{2}}|\xi|(\hat{v}_{0}+\hat{v}_{1})\big\|_{L^{2}}
\lesssim t^{-1} \big\|(t|\xi|)^{1-\f{\mu}{2}}(\hat{v}_{0}+\hat{v}_{1})\big\|_{L^{2}} \lesssim t^{-1}\left\|v_0+v_1\right\|_{L^ 2}.
\end{equation}
For $t|\xi|\leq1$, by \eqref{equ:q11} and \eqref{equ:q26}, we arrive at
$$
\left\||\xi|\hat{v}(t, \cdot)\right\|_{L^2(A_3)} \lesssim \left\||\xi|(\hat{v}_{0}+\hat{v}_{1})\right\|_{L^{2}}  \leq t^{-1}\left\|v_{0}+v_{1}\right\|_{L^{2}}.
$$
This, together with \eqref{equ:q76} and \eqref{equ:q77}, yields
$$
\left\||\xi|\hat{v}(t, \cdot)\right\|_{L^2(\R^2)}\lesssim t^{-\f{\mu}{2}}\left\|\nabla v_0\right\|_{L^ 2}+t^{-\f{\mu}{2}}\left\| v_1\right\|_{L^2}+t^{-1}\left\|v_0+v_1\right\|_{L^ 2}.
$$
Analogously, by \eqref{equ:q32}, \eqref{equ:q36} and \eqref{equ:q39}, we have
$$
\left\|\p_t\hat{v}(t, \cdot)\right\|_{L^2(\R^2)}\lesssim t^{-\f{\mu}{2}}\left\|\nabla v_0\right\|_{L^ 2}+t^{-\f{\mu}{2}}\left\| v_1\right\|_{L^ 2}+t^{-1}\left\|v_0+v_1\right\|_{L^ 2}.
$$
Thus, \eqref{equ:q75} is shown.

Next we turn to the proof of \eqref{equ:q74}. As treated as in  Lemma \ref{lem2}, the proof
procedure is divided into the following four cases.

\vskip 0.2 true cm

{\bf Case 1.  The treatment  of $\|\p(\p v)(t, \cdot)\|_{L^2(\R^2)}$ }

\vskip 0.2 true cm

We now show
\begin{equation}\label{equ:q78}
\|\p(\p v)(t, \cdot)\|_{L^2(\R^2)} \lesssim t^{-\f{\mu}{2}}\left\|\nabla v_0\right\|_{Z, 1, 2}
+t^{-\f{\mu}{2}}\left\|v_1\right\|_{Z, 1,2}+t^{-1}\left\|v_0+v_1\right\|_{Z, 1, 2}.
\end{equation}
Note that
\begin{equation}\label{equ:q78-1}
\begin{aligned}
\|\p(\p v)(t, \cdot)\|_{L^2(\R^2)} & \lesssim\|\p_t^2v\|_{L^2(\R^2)}+\sum_{j=1}^2\|\p_t\p_jv\|_{L^2(\R^2)}+\sum_{i,j=1}^2\|\p_i\p_jv\|_{L^2(\R^2)}\\
& \lesssim \|\p_t^2\hat{v}\|_{L^2(\R^2)}+\left\||\xi|\p_t\hat{v}(t, \cdot)\right\|_{L^2(\R^2)}+ \sum_{i,j=1}^2\left\||\xi_j||\xi_i|\hat{v}(t, \cdot)\right\|_{L^2(\R^2)}.
\end{aligned}
\end{equation}
It follows from Lemma \ref{lem6} and Lemma \ref{lem1} that for $t|\xi|\leq1$,
\begin{equation}\label{p-1}
\begin{aligned}
& \left|\p_t^2\Psi_0(t, 1,\xi)\right| \lesssim t^\rho|\xi|^3(t|\xi|)^{-\f{1}{2}}|\xi|^{-\f{1}{2}}= t^{-\frac{\mu}{2}}|\xi|\cdot|\xi|, \\
& \left|\p_t^2\Psi_1(t, 1, \xi)\right|  \lesssim t^\rho|\xi|^2(t|\xi|)^{-\f{1}{2}}|\xi|^{-\f{1}{2}}= t^{-\frac{\mu}{2}}\cdot|\xi|.
\end{aligned}
\end{equation}
For  $|\xi|\leq1$ and $t|\xi|\geq1$,
\begin{equation}\label{p-2}
\begin{aligned}
& \left|\p_t^2\Psi_0(t, 1,\xi)\right| \lesssim t^\rho|\xi|^3(t|\xi|)^{-\f{1}{2}}|\xi|^{\rho-1}\leq (t|\xi|)^{-\frac{\mu}{2}}\cdot|\xi|, \\
& \left|\p_t^2\Psi_1(t, 1, \xi)\right|  \lesssim t^\rho|\xi|^2(t|\xi|)^{-\f{1}{2}}|\xi|^{\rho}\leq (t|\xi|)^{-\frac{\mu}{2}}\cdot|\xi|.
\end{aligned}
\end{equation}
For $t|\xi|\leq1$,
\begin{equation}\label{p-3}
\begin{aligned}
& \left|\p_t^2\Psi_0(t, 1,\xi)\right| \lesssim t^{-1}|\xi|, \\
& \left|\p_t^2\Psi_1(t, 1, \xi)\right|  \lesssim t^{-1}|\xi|.
\end{aligned}
\end{equation}
By comparing the estimates in \eqref{p-1}, \eqref{p-2} and \eqref{p-3} with the corresponding ones in \eqref{equ:q32}, \eqref{equ:q36} and \eqref{equ:q39},
it is known that the treatment of $\|\p_t^2\hat{v}\|_{L^2(\R^2)}$ can be similar to the one for $\left\||\xi|\p_t\hat{v}(t, \cdot)\right\|_{L^2(\R^2)}$. Therefore, in order to prove  \eqref{equ:q78}, it suffices only to deal with $\left\||\xi|\p_t\hat{v}(t, \cdot)\right\|_{L^2(\R^2)}$ and $\left\||\xi_j||\xi_i|\hat{v}(t, \cdot)\right\|_{L^2(\R^2)}$ $(i, j=1, 2)$.

It follows from  \eqref{equ:q32} and \eqref{equ:q20} that for $|\xi|\geq 1$,
$$
\begin{aligned}
\left\||\xi|\p_t\hat{v}(t, \cdot)\right\|_{L^2(A_1)}
& \lesssim t^{-\frac{\mu}{2}}\left\||\xi||\xi|\hat{v_{0}}\right\|_{L^{2}}+t^{-\f{\mu}{2}} \||\xi|\hat{v}_{1}\|_{L^{2}}\\
& \lesssim t^{-\frac{\mu}{2}}\left\|\nabla v_{0}\right\|_{Z, 1, 2}+t^{-\f{\mu}{2}}\left\|(|\xi_1|+|\xi_2|)\hat{v}_1\right\|_{L^2}\\
& \lesssim t^{-\f{\mu}{2}}\left\|\nabla v_0\right\|_{Z, 1, 2}+t^{-\f{\mu}{2}}\left\|v_1\right\|_{Z, 1,2}
\end{aligned}
$$
and
$$
\begin{aligned}
\left\||\xi_j||\xi_i|\hat{v}(t, \cdot)\right\|_{L^2(A_1)}
& \lesssim t^{-\frac{\mu}{2}}\left\||\xi||\xi_j|v_{0}\right\|_{L^{2}}+t^{-\f{\mu}{2}} \||\xi_j|\hat{v}_{1}\|_{L^{2}}\\
& \lesssim t^{-\f{\mu}{2}}\left\|\nabla v_0\right\|_{Z, 1, 2}+t^{-\f{\mu}{2}}\left\|v_1\right\|_{Z,1,2}.
\end{aligned}
$$
For $|\xi|\leq1$ and $t|\xi|\geq1$, by \eqref{equ:q28} and \eqref{equ:q36}, one has that for $j=1, 2$,
$$
\left\||\xi_j||\xi_i|\hat{v}(t, \cdot)\right\|_{L^2(A_2)}\lesssim \big\|(t|\xi|)^{-\f{\mu}{2}}|\xi|(\hat{v}_{0}+\hat{v}_{1})\big\|_{L^{2}}
\lesssim t^{-1} \big\|(t|\xi|)^{1-\f{\mu}{2}}(\hat{v}_{0}+\hat{v}_{1})\big\|_{L^{2}} \lesssim t^{-1}\left\|v_0+v_1\right\|_{L^ 2}
$$
and
$$
\left\||\xi|\p_t\hat{v}(t, \cdot)\right\|_{L^2(A_2)}\lesssim \big\|(t|\xi|)^{-\f{\mu}{2}}|\xi|(\hat{v}_{0}+\hat{v}_{1})\big\|_{L^{2}} \lesssim t^{-1}\left\|v_0+v_1\right\|_{L^ 2}.
$$
For $t|\xi|\leq1$, by \eqref{equ:q26} and \eqref{equ:q39}, we get
$$
\left\||\xi_j||\xi_i|\hat{v}(t, \cdot)\right\|_{L^2(A_3)}\lesssim \left\||\xi|^2(\hat{v}_{0}+\hat{v}_{1})\right\|_{L^{2}} \lesssim t^{-2}\left\|v_0+v_1\right\|_{L^ 2}
$$
and
$$
\left\||\xi|\p_t\hat{v}(t, \cdot)\right\|_{L^2(A_3)}\lesssim \left\|t^{-1}|\xi|(\hat{v}_{0}+\hat{v}_{1})\right\|_{L^{2}} \lesssim t^{-2}\left\|v_0+v_1\right\|_{L^ 2}.
$$
Collecting the above conclusions, \eqref{equ:q78} is then proved.
\vskip 0.2 true cm

{\bf Case 2.  The treatment of $\|L_0(\p v)(t, \cdot)\|_{L^2(\R^2)}$}

\vskip 0.2 true cm

We now prove
\begin{equation}\label{equ:q79}
\|L_0(\p v)(t, \cdot)\|_{L^2(\R^2)} \lesssim t^{-\f{\mu}{2}}\left\|\nabla v_0\right\|_{Z, 1, 2}
+t^{-\f{\mu}{2}}\left\|v_1\right\|_{Z, 1,2}+t^{-1}\left\|v_0+v_1\right\|_{Z, 1, 2}.
\end{equation}
Analogous to the treatment of \eqref{equ:q78-1} and \eqref{equ:q45}, it suffices to estimate  $\|t|\xi|\p_t\hat{v}\|_{L^2(\R^2)}$ and $\||\xi|(\xi_1\p_{\xi_1}\hat{v}+\xi_2\p_{\xi_2})\hat{v}\|_{L^2(\R^2)}$.

At first, we estimate $\|t|\xi|\p_t\hat{v}\|_{L^2(\R^2)}$. For $|\xi|\geq1$,  by \eqref{equ:q32}, one has
\begin{equation}\label{equ:q80}
\left\|t|\xi|\p_t\hat{v}(t, \cdot)\right\|_{L^2(A_1)}
 \lesssim t^{-\frac{\mu}{2}}\left\|t|\xi||\xi|\hat{v}_{0}\right\|_{L^{2}}+t^{-\f{\mu}{2}} \|t|\xi|\hat{v}_{1}\|_{L^{2}} \lesssim t^{-\f{\mu}{2}}\left\|\nabla v_0\right\|_{Z,1,2}+t^{-\f{\mu}{2}}\left\|v_1\right\|_{Z,1,2}.
\end{equation}
For $|\xi|\leq1$ and $t|\xi|\geq1$,  by  \eqref{equ:q36}, we arrive at
\begin{equation}\label{equ:q81}
\begin{aligned}
\left\|t|\xi|\p_t\hat{v}(t, \cdot)\right\|_{L^2(A_2)}
&\lesssim t^{-1}\big\|(t|\xi|)^{1-\f{\mu}{2}}t|\xi|(\hat{v}_{0}+\hat{v}_{1})\big\|_{L^2}\\
& \lesssim  t^{-1}\left\|(t\p_1+x_1\p_t)(v_{0}+v_1)\right\|_{L^{2}}+ t^{-1}\left\|(t\p_1+x_1\p_t)(v_0+v_{1})\right\|_{L^2}\\
&\lesssim t^{-1}\left\|v_{0}+v_{1}\right\|_{Z, 1, 2}.
\end{aligned}
\end{equation}
For  $t|\xi|\leq1$,  by  \eqref{equ:q39}, one can obtain
$$
\left\|t|\xi|\p_t\hat{v}(t, \cdot)\right\|_{L^2(A_3)}  \leq\big\|t^{-1}t|\xi|(\hat{v}_{0}+\hat{v}_{1})\big\|_{L^{2}}
\lesssim t^{-1}\left\|v_{0}+v_{1}\right\|_{L^ 2}.
$$
This, together with \eqref{equ:q80} and \eqref{equ:q81}, yields
\begin{equation}\label{equ:q82}
\|t|\xi|\p_t\hat{v}\|_{L^2(\R^2)}\lesssim t^{-\f{\mu}{2}}\left\|\nabla v_0\right\|_{Z, 1, 2}
+t^{-\f{\mu}{2}}\left\|v_1\right\|_{Z, 1,2}+t^{-1}\left\|v_0+v_1\right\|_{Z, 1, 2}.
\end{equation}

Next we estimate  $\||\xi|(\xi_1\p_{\xi_1}\hat{v}+\xi_2\p_{\xi_2})\hat{v}\|_{L^2(\R^2)}$.

For $|\xi|\geq1$, it follows from \eqref{dr1}-\eqref{equ:q51} and \eqref{equ:q12} that
\begin{equation}\label{equ:q83}
\begin{aligned}
&\left\||\xi|(\xi_1\p_{\xi_1}\hat{v}+\xi_2\p_{\xi_2})\hat{v}\right\|_{L^2(A_1)}\\
&=\||\xi|\xi_1(\p_{\xi_1}\Psi_0)\hat{v}_0(\xi)+|\xi|\xi_1\Psi_0\p_{\xi_1}\hat{v}_0(\xi)+|\xi|\xi_1(\p_{\xi_1}\Psi_1)\hat{v}_1(\xi)
+|\xi|\xi_1\Psi_1\p_{\xi_1}\hat{v}_1(\xi)\\
&\quad +|\xi|\xi_2(\p_{\xi_2}\Psi_0)\hat{v}_0(\xi)+|\xi|\xi_2\Psi_0\p_{\xi_2}\hat{v}_0(\xi)
+|\xi|\xi_2(\p_{\xi_2}\Psi_1)\hat{v}_1(\xi)+|\xi|\xi_2\Psi_1\p_{\xi_2}\hat{v}_1(\xi)\|_{L^2}\\
& \lesssim \big\|t^{-\frac{\mu}{2}}|\xi|\hat{v}_{0}(\xi)+t^{-\frac{\mu}{2}}t(|\xi_1|+|\xi_2|)|\xi|\hat{v}_{0}(\xi)
+t^{-\f{\mu}{2}}|\xi|(\xi_1\p_{\xi_1}+\xi_2\p_{\xi_2})\hat{v}_{0}(\xi)\big\|_{L^{2}}\\
&\quad +\big\|t^{-\frac{\mu}{2}}t(|\xi_1|+|\xi_2|)\hat{v}_{1}(\xi)\big\|_{L^{2}}
+\big\|t^{-\f{\mu}{2}}(\xi_1\p_{\xi_1}+\xi_2\p_{\xi_2})\hat{v}_{1}(\xi)\big\|_{L^{2}}\\
& \lesssim t^{-\frac{\mu}{2}}\left\|\nabla v_{0}\right\|_{Z, 1, 2}+ t^{-\f{\mu}{2}}\left\|t\p_1v_1\right\|_{L^2}+ t^{-\f{\mu}{2}}\left\|t\p_2v_1\right\|_{L^2}+ t^{-\f{\mu}{2}}\left\|(x_1\p_1+x_2\p_2)v_1\right\|_{L^2}+t^{-\f{\mu}{2}}\left\|v_1\right\|_{L^2}\\
&\lesssim  t^{-\frac{\mu}{2}}\left\|\nabla v_{0}\right\|_{Z, 1, 2}+ t^{-\f{\mu}{2}}\left\|v_1\right\|_{Z, 1,2}.
\end{aligned}
\end{equation}
For $|\xi|\leq1$ and $t|\xi|\geq1$,
by  \eqref{equ:q55}-\eqref{equ:q56} and \eqref{equ:q14}-\eqref{equ:q15}, one has that
\begin{equation}\label{equ:q84}
\begin{aligned}
&\left\||\xi|(\xi_1\p_{\xi_1}\hat{v}+\xi_2\p_{\xi_2})\hat{v}\right\|_{L^2(A_2)}
\lesssim \big\|t^{-\frac{\mu}{2}}|\xi|\hat{v}_{0}(\xi)+t^{-\frac{\mu}{2}}t(|\xi_1|+|\xi_2|)|\xi|\hat{v}_{0}(\xi)\big\|_{L^2}\\
&\quad +t^{-1}\big\|(t|\xi|)^{1-\frac{\mu}{2}}t(|\xi_1|+|\xi_2|)(\hat{v}_{0}(\xi)+\hat{v}_{1}(\xi))\big\|_{L^{2}}
+t^{-1}\big\|(t|\xi|)^{1-\f{\mu}{2}}(\xi_1\p_{\xi_1}+\xi_2\p_{\xi_2})(\hat{v}_{0}(\xi)+\hat{v}_{1}(\xi))\big\|_{L^{2}}\\
&\lesssim  t^{-\frac{\mu}{2}}\left\|\nabla v_{0}\right\|_{Z, 1, 2}+ t^{-1}\left\|(t\p_1+x_1\p_t)(v_0+v_1)\right\|_{L^2}+ t^{-1}\left\|(t\p_2+x_2\p_t)(v_0+v_1)\right\|_{L^2}\\
&\quad+ t^{-1}\left\|(x_1\p_1+x_2\p_2+t\p_t)(v_0+v_1)\right\|_{L^2}\\
&\lesssim t^{-\frac{\mu}{2}}\left\|\nabla v_{0}\right\|_{Z, 1, 2}+ t^{-1}\left\|v_0+v_1\right\|_{Z, 1, 2}.
\end{aligned}
\end{equation}
For  $t|\xi|\leq1$,  we have from \eqref{equ:q55}-\eqref{equ:q56} and \eqref{equ:q15} that
\begin{equation}\label{equ:q85}
\begin{aligned}
&\left\||\xi|(\xi_1\p_{\xi_1}\hat{v}+\xi_2\p_{\xi_2})\hat{v}\right\|_{L^2(A_3)}\\
&\lesssim \big\|t^{-\frac{\mu}{2}}|\xi|\hat{v}_{0}(\xi)\big\|_{L^{2}}+\big\||\xi|(\hat{v}_{0}(\xi)+\hat{v}_{1}(\xi))\big\|_{L^{2}}
+\left\||\xi|(\xi_1\p_{\xi_1}+\xi_2\p_{\xi_2})(\hat{v}_{0}(\xi)+\hat{v}_{1}(\xi))\right\|_{L^{2}}\\
&\lesssim  t^{-\frac{\mu}{2}}\left\|\nabla v_{0}\right\|_{Z, 1, 2}+ t^{-1}\left\|v_0+v_1\right\|_{L^2}+ t^{-1}\left\|(x_1\p_1+x_2\p_2)(v_0+v_1)\right\|_{L^2}\\
&\lesssim t^{-\frac{\mu}{2}}\left\|\nabla v_{0}\right\|_{Z, 1, 2}+ t^{-1}\left\|v_0+v_1\right\|_{Z, 1, 2}.
\end{aligned}
\end{equation}
Therefore, \eqref{equ:q79} follows from \eqref{equ:q82} and \eqref{equ:q83}-\eqref{equ:q85}.
\vskip 0.2 true cm

{\bf Case 3.  The treatment  of $\|L_j(\p v)(t, \cdot)\|_{L^2(\R^2)}(j=1, 2)$}

\vskip 0.2 true cm
We now establish
\begin{equation}\label{equ:q86}
\|L_j(\p v)(t, \cdot)\|_{L^2(\R^2)} \lesssim t^{-\f{\mu}{2}}\left\|\nabla v_0\right\|_{Z, 1, 2}
+t^{-\f{\mu}{2}}\left\|v_1\right\|_{Z, 1,2}+t^{-1}\left\|v_0+v_1\right\|_{Z, 1, 2}.
\end{equation}
It suffices only to estimate $\|L_1(\p v)(t, \cdot)\|_{L^2(\R^2)}$.
Note that $|x|\lesssim t$ holds for $(t, x)\in \operatorname{supp}v$. Then
\begin{equation}\label{equ:q87}
\begin{aligned}
\|L_1(\p v)(t, \cdot)\|_{L^2(\R^2)}
& \leq \|t|\xi_1||\xi|\hat{v}\|_{L^2(\R^2)}+\|t|\xi_1|\p_t\hat{v}\|_{L^2(\R^2)}+\|\f{x_1}{t}t\p_t\p v\|_{L^2(\R^2)}\\
& \lesssim \|t|\xi_1||\xi|\hat{v}\|_{L^2(\R^2)}+\|t|\xi_1|\p_t\hat{v}\|_{L^2(\R^2)}+\|t\p_t\p v\|_{L^2(\R^2)}.
\end{aligned}
\end{equation}

We next deal with each term in the right hand side of \eqref{equ:q87}.

By comparing the estimates of \eqref{equ:q20}, \eqref{equ:q28} and \eqref{equ:q26} with those of \eqref{equ:q32},
\eqref{equ:q36} and \eqref{equ:q39}, respectively, then $\|t|\xi_1||\xi|\hat{v}\|_{L^2(\R^2)}$ can be treated
as for $\|t|\xi_1|\p_t\hat{v}\|_{L^2(\R^2)}$ in \eqref{equ:q82}.

For the term $\|t\p_t\p v\|_{L^2(\R^2)}$, the following estimate has been obtained in the  proof procedure of
$\|L_0(\p v)(t, \cdot)\|_{L^2(\R^2)}$ in Case 2.
$$
\|t\p_t\p v\|_{L^2(\R^2)}\lesssim t^{-\f{\mu}{2}}\left\|\nabla v_0\right\|_{Z, 1, 2}+t^{-\f{\mu}{2}}\left\|v_1\right\|_{Z, 1,2}+t^{-1}\left\|v_0+v_1\right\|_{Z, 1, 2}.
$$
Finally, we estimate the term $\|t|\xi_1|\p_t\hat{v}\|_{L^2(\R^2)}$. Analogously to the treatment in \eqref{equ:q75},
one has that for $|\xi|\geq 1$,
\begin{equation}\label{equ:q88}
\begin{aligned}
\left\|t|\xi_1|\p_t\hat{v}\right\|_{L^2(A_1)} & \leq\left\|\p_t\Psi_{0 }(t, 1, \xi)t|\xi_1|\hat{v}_{1}\right\|_{L^{2}}+\left\|\p_t\Psi_{1 }(t, 1, \xi)t|\xi_1|\hat{v}_{1}\right\|_{L^{2}} \\
& \lesssim t^{-\frac{\mu}{2}}\left\|t|\xi_1||\xi| \hat{v}_{0}\right\|_{L^2}+t^{-\f{\mu}{2}} \|t|\xi_1|\hat{v}_{1}\|_{L^{2}}\\
& \lesssim t^{-\frac{\mu}{2}}\left\|\nabla v_{0}\right\|_{Z, 1, 2}+t^{-\f{\mu}{2}} \|v_{1}\|_{Z, 1, 2}.
\end{aligned}
\end{equation}
In addition, for $|\xi|\leq1$ and $t|\xi|\geq1$, it holds
\begin{equation}\label{equ:q89}
\left\|t|\xi_1|\p_t\hat{v}\right\|_{L^2(A_2)}
\lesssim t^{-1} \big\|(t|\xi|)^{1-\f{\mu}{2}}t|\xi_1|(\hat{v}_{0}+\hat{v}_{1})\big\|_{L^{2}} \lesssim t^{-1}\left\|v_0+v_1\right\|_{Z, 1, 2}.
\end{equation}
For $t|\xi|\leq1$,
$$
\left\|t|\xi_1|\p_t\hat{v}\right\|_{L^2(A_3)} \lesssim \left\||\xi|(\hat{v}_{0}+\hat{v}_{1})\right\|_{L^{2}}  \leq t^{-1}\left\|v_{0}+v_{1}\right\|_{L^{2}}.
$$
Therefore,
\begin{equation}\label{equ:q90}
\left\|t|\xi_1|\p_t\hat{v}\right\|_{L^2(\R^2)}\lesssim t^{-\f{\mu}{2}}\left\|\nabla v_0\right\|_{Z, 1, 2}
+t^{-\f{\mu}{2}}\left\| v_1\right\|_{Z, 1, 2}+t^{-1}\left\|v_0+v_1\right\|_{Z, 1, 2},
\end{equation}
which derives \eqref{equ:q86}.
\vskip 0.2 true cm

{\bf Case 4.  The treatment  of $\|\Omega_{12}(\p v)(t, \cdot)\|_{L^2(\R^2)}$}

\vskip 0.2 true cm
We will prove
\begin{equation}\label{equ:q91}
\|\Omega_{12}(\p v)(t, \cdot)\|_{L^2(\R^2)} \lesssim t^{-\f{\mu}{2}}\left\|\nabla v_0\right\|_{Z, 1, 2}
+t^{-\f{\mu}{2}}\left\| v_1\right\|_{Z, 1, 2}+t^{-1}\left\|v_0+v_1\right\|_{Z, 1, 2}.
\end{equation}

Analogously treated as in \eqref{equ:q71} and \eqref{equ:q87}, one can obtain
\begin{equation}\label{equ:q92}
\begin{aligned}
\|\Omega_{12}(\p v)(t, \cdot)\|_{L^2(\R^2)}
& \leq \|\f{x_1}{t}t\p_2\p v\|_{L^2(\R^2)}+\|\f{x_2}{t}t\p_1v\|_{L^2(\R^2)}\\
& \lesssim \|t|\xi_2||\xi|\hat{v}\|_{L^2(\R^2)}+\|t|\xi_1||\xi|\hat{v}\|_{L^2(\R^2)}\\
&\quad+\left\|t|\xi_2|\p_t\hat{v}\right\|_{L^2(\R^2)}+\left\|t|\xi_1|\p_t\hat{v}\right\|_{L^2(\R^2)}.
\end{aligned}
\end{equation}
In addition, by \eqref{equ:q90}, we have that for $j=1,2$,
$$
\left\|t|\xi_j|\p_t\hat{v}\right\|_{L^2(\R^2)} \lesssim t^{-\f{\mu}{2}}\left\|\nabla v_0\right\|_{Z, 1, 2}
+t^{-\f{\mu}{2}}\left\| v_1\right\|_{Z, 1, 2}+t^{-1}\left\|v_0+v_1\right\|_{Z, 1, 2}.
$$
On the other hand, as illustrated in \eqref{equ:q87}, it holds that for $j=1, 2$,
$$
\left\|t|\xi_j||\xi|\hat{v}\right\|_{L^2(\R^2)} \lesssim t^{-\f{\mu}{2}}\left\|\nabla v_0\right\|_{Z, 1, 2}
+t^{-\f{\mu}{2}}\left\| v_1\right\|_{Z, 1, 2}+t^{-1}\left\|v_0+v_1\right\|_{Z, 1, 2}.
$$
Therefore, \eqref{equ:q91} is shown.

Collecting all the cases above, we complete the proof of Lemma \ref{lem3}.

\end{proof}
\section{Estimates of solution to 2-D inhomogeneous equation $\square w+\f{\mu}{t}\,\p_tw=F$}\label{sec4}
In this section, we will derive some time-decay estimates of solution to the 2-D linear problem
\begin{equation}\label{equ:l1}
\left\{ \enspace
\begin{aligned}
&\square w+\f{\mu}{t}\,\p_tw=F, &&
t\geq 1,\\
&w(1,x)=0, \quad \partial_{t} w(1,x)=0, &&x\in\R^2,
\end{aligned}
\right.
\end{equation}
where $2<\mu<3$. To this end,  it follows from the  Duhamel's  principle that we first treat such a
homogeneous initial data problem from the initial variable time $\tau\ge 1$
\begin{equation}\label{equ:l2}
\left\{ \enspace
\begin{aligned}
&\square v+\f{\mu}{t}\,\p_tv=0, &&
t\geq \tau \geq1,\\
&v(\tau,x)=0, \quad \partial_{t} v(\tau,x)=v_1( x), &&x\in\R^2,
\end{aligned}
\right.
\end{equation}
where $v_1(x)\in C_0^{\infty}(\R^2)$.
\begin{lemma}\label{lem4}
Let $v$ solve \eqref{equ:l2}. Then for any $\varepsilon_1\in(0, 1)$, there exists a constant
$\delta(\varepsilon_1)=\f{2\varepsilon_1}{1+\varepsilon_1}>0$ such that
\begin{equation}\label{equ:l3}
\|v\|_{Z, 1, 2} \lesssim t^{\delta(\varepsilon_1)-1}\tau\left\|v_1\right\|_{Z, 1,(1+\varepsilon_1, 2)}
\end{equation}
and
\begin{equation}\label{equ:l4}
\|\p v\|_{Z, 1, 2} \lesssim t^{-1}\tau\left\|v_1\right\|_{Z, 1,2}.
\end{equation}
\end{lemma}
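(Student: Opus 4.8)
Since the datum for the solution vanishes apart from $\p_t v(\tau,x)=v_1(x)$, the representation \eqref{equ:q4} collapses to $\hat v(t,\xi)=\Psi_1(t,\tau,\xi)\hat v_1(\xi)$, hence also $\widehat{\p_t^k v}(t,\xi)=\p_t^k\Psi_1(t,\tau,\xi)\hat v_1(\xi)$ with $\p_t^k\Psi_1$ given explicitly by \eqref{equ:q9} and \eqref{equ:q11}. The plan is to rerun the proofs of Lemmas \ref{lem2} and \ref{lem3} essentially unchanged, the one difference being that, the datum now sitting at time $\tau\ge1$, the frequency variable must be split into the $\tau$-adapted zones
\[
B_1=\{\xi:\tau|\xi|\ge1\},\qquad B_2=\{\xi:\tau|\xi|\le1\le t|\xi|\},\qquad B_3=\{\xi:t|\xi|\le1\},
\]
which reduce to $A_1,A_2,A_3$ when $\tau=1$. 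On $B_1$ the large-argument asymptotics \eqref{equ:q12}, \eqref{equ:q14} hold for the Hankel/Bessel functions at both arguments $\tau|\xi|$ and $t|\xi|$; on $B_2$ the small-argument bounds \eqref{equ:q13}, \eqref{equ:q15} hold at $\tau|\xi|$ and the large ones at $t|\xi|$; on $B_3$ the small ones hold at both.

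The first step is to record the multiplier bounds. Combining \eqref{equ:q9}, \eqref{equ:q11}, the recurrences \eqref{dr1}, Lemma \ref{lem6} (for $\p_t^2\Psi_1$) and Lemma \ref{lem1}, one obtains the exact analogues of \eqref{equ:q20}, \eqref{equ:q28}, \eqref{equ:q26}, of \eqref{equ:q32}, \eqref{equ:q36}, \eqref{equ:q39}, and of \eqref{equ:q51}--\eqref{equ:q52}, \eqref{equ:q55}--\eqref{equ:q56}, each carrying one extra factor of $\tau$: indeed the prefactor is now $t^{\rho}\tau^{1-\rho}=t^{\rho}\tau^{(\mu+1)/2}$ in place of $t^{\rho}$, and the datum-side arguments are $\tau|\xi|$ in place of $|\xi|$. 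Schematically, on $B_1$ one gets $|\p_t^k\Psi_1|\lesssim t^{-\mu/2}\tau^{\mu/2}|\xi|^{k-1}$; on $B_2$, $|\p_t^k\Psi_1|\lesssim\tau(t|\xi|)^{-\mu/2}|\xi|^{k}\lesssim\tau(1+t|\xi|)^{-1}|\xi|^{k}$, using $\mu>2$ and $t|\xi|\ge1$; on $B_3$, $|\p_t^k\Psi_1|$ satisfies the counterparts of \eqref{equ:q26} and \eqref{equ:q39} with an extra factor $\tau$; and the $\p_{\xi_j}\Psi_1$ bounds follow from \eqref{dr1} as in \eqref{equ:q51}--\eqref{equ:q56}, once more with one extra $\tau$.

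The second step is the $L^2$ estimates $\|v(t,\cdot)\|_{L^2}\lesssim t^{\delta(\ve_1)-1}\tau\|v_1\|_{(1+\ve_1,2)}$ and $\|\p v(t,\cdot)\|_{L^2}\lesssim t^{-1}\tau\|v_1\|_{L^2}$, carried out zone by zone exactly as in Parts~1--3 of Lemma \ref{lem2} and the matching step of Lemma \ref{lem3}. On $B_1$ one uses $|\xi|^{-1}\lesssim\tau(1+\tau^2|\xi|^2)^{-1/2}$ and the weighted-$L^2$/Sobolev device of \eqref{equ:q22}--\eqref{equ:q23} under the substitution $x=\tau y$, $\xi=\eta/\tau$, together with $\|v_1(\tau\cdot)\|_{H^{-1}}\lesssim\tau^{-2/(1+\ve_1)}\|v_1\|_{(1+\ve_1,2)}$; on $B_2$ and $B_3$ one first replaces $(t|\xi|)^{-\mu/2}$ (resp.\ the constant $1$) by $(1+t|\xi|)^{-1}$ and then runs the same $H^{-1}$ estimate under $x=ty$, $\xi=\eta/t$, which is what produces the $t^{\delta(\ve_1)-1}$ gain. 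For $\|\p v\|_{L^2}$ one never trades a power of $|\xi|$, works directly with $\|v_1\|_{L^2}$, and uses $t^{-\mu/2}\le t^{-1}$. In each instance the multiplier has supplied a factor $t^{-\mu/2}\tau^{\mu/2}$ (or $\tau$, or $\tau t^{-1}$) while the target carries $t^{\delta(\ve_1)-1}\tau$ (or $t^{-1}\tau$), and the comparison always reduces to $t^{\gamma}\le\tau^{\gamma}$ for a \emph{common} exponent $\gamma\in\{\,1-\tfrac{\mu}{2}-\delta(\ve_1),\ 1-\tfrac{\mu}{2}\,\}$ (a reflection of the joint $(t,\tau,x)$-scaling), valid because $2<\mu<3$ and $0<\delta(\ve_1)<1$ force $\gamma<0$ whereas $t\ge\tau\ge1$.

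The last step upgrades to the $\|\cdot\|_{Z,1,2}$-norms along Cases~1--4 of Lemmas \ref{lem2}--\ref{lem3}: for $\p$, estimate $|\xi|\hat v$ and $\p_t\hat v$; for $L_0$, split $\widehat{L_0 v}$ as in \eqref{equ:q45} and bound $\|t\p_t\hat v\|_{L^2}$ and $\|\xi_1\p_{\xi_1}\hat v+\xi_2\p_{\xi_2}\hat v\|_{L^2}$, the latter through the $\p_{\xi_j}\Psi_1$ bounds; for $L_j$ and $\O_{12}$, exploit $|x|\lesssim t$ on $\operatorname{supp}v$ (finite propagation speed from $\operatorname{supp}v_1$) as in \eqref{equ:q63}, \eqref{equ:q71} to reduce to $\|t|\xi_j|\hat v\|_{L^2}$ and $\|t\p_t\hat v\|_{L^2}$; summing over $B_1,B_2,B_3$ and invoking Parseval's equality then gives \eqref{equ:l3} and \eqref{equ:l4}. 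I expect the main difficulty to be one of stamina rather than of ideas: the $\p_{\xi_j}\Psi_1(t,\tau,\xi)$ bounds feeding the $L_0$-case are long but strictly routine expansions via \eqref{dr1}, parallel to \eqref{equ:q51}--\eqref{equ:q56}, and the only genuinely new point to watch is that the additional $\tau$-powers they generate are still absorbed by $t\ge\tau\ge1$ in the manner described above. (The dilation $w(s,y)=v(\tau s,\tau y)$, which solves the $\tau=1$ problem \eqref{equ:q16} with datum $\tau v_1(\tau\cdot)$ at $s=1$, offers a shortcut via Lemmas \ref{lem2}--\ref{lem3}; the direct frequency argument above is preferred because it is cleaner and stays entirely within the framework of Section \ref{sec3}.)
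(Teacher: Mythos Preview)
Your proposal is correct and follows essentially the same approach as the paper: the paper also splits the frequency space into the three $\tau$-adapted zones $\{\tau|\xi|\ge1\}$, $\{\tau|\xi|\le1\le t|\xi|\}$, $\{t|\xi|\le1\}$, records the corresponding bounds on $\Psi_1$, $\p_t\Psi_1$ and $\p_{\xi_j}\Psi_1$ from \eqref{equ:q9}, \eqref{equ:q11}, \eqref{dr1} and Lemma~\ref{lem1}, feeds them through the $H^{-1}$/Sobolev device of \eqref{equ:q22}--\eqref{equ:q23} (with the substitution $x=\tau y$, $\xi=\eta/\tau$ on $B_1$ and $x=ty$, $\xi=\eta/t$ on $B_2,B_3$), and then upgrades to $\|\cdot\|_{Z,1,2}$ via the same case splitting as in Lemmas~\ref{lem2}--\ref{lem3}. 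Your observation that every zone comparison collapses to $t^{\gamma}\le\tau^{\gamma}$ for a negative $\gamma$ is exactly the mechanism the paper uses implicitly (see for instance the passage from the first to the last line of \eqref{equ:l9} and \eqref{equ:l13}), and your $B_2$ bound $|\p_t\Psi_1|\lesssim\tau(t|\xi|)^{-\mu/2}|\xi|$ is in fact the sharp one---the paper's displayed \eqref{equ:l11} records the subdominant determinant term, but both routes lead to the same final estimate.
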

\begin{proof}
Although the proof procedure of Lemma \ref{lem4} is similar to that for  Lemma \ref{lem2} and Lemma \ref{lem3},
we still provide some details of proof for \eqref{equ:l3} and \eqref{equ:l4} due to  the appearance of the parameter $\tau$.

We now establish
$$
\|v\|_{Z, 1, 2} \lesssim t^{\delta(\varepsilon_1)-1}\tau\left\|v_1\right\|_{Z, 1,(1+\varepsilon_1, 2)}.
$$
Note that
\begin{equation}\label{Yson-5}
\|v\|_{Z, 1, 2} \leq \|\hat{v}\|_{L^2}+\sum_{j=1}^2\||\xi_j|\hat{v}\|_{L^2}+\|\p_t\hat{v}\|_{L^2}+\|\widehat{L_0v}\|_{L^2}
+\sum_{j=1}^2\|\widehat{L_jv}\|_{L^2}+\|\widehat{\Omega_{12}v}\|_{L^2}.
\end{equation}
Next we deal with each term in \eqref{Yson-5}.

It follows from \eqref{equ:q9} and \eqref{equ:q12} that for $t|\xi|\geq \tau|\xi|\geq 1$,
\begin{equation}\label{equ:l6}
 \left|\Psi_1(t, \tau, \xi)\right|  \lesssim t^\rho\tau^{1-\rho}(\tau|\xi|)^{-\frac{1}{2}}(t|\xi|)^{-\frac{1}{2}}=t^{-\frac{\mu}{2}}\tau^{\frac{\mu}{2}}|\xi|^{-1}.
\end{equation}
In addition, for $t|\xi|\geq 1$ and $ \tau|\xi|\leq 1$, by \eqref{equ:q11} and \eqref{equ:q14}-\eqref{equ:q15}, one has
\begin{equation}\label{equ:l7}
\left|\Psi_1(t, \tau, \xi)\right|  \lesssim t^\rho\tau^{1-\rho}(\tau|\xi|)^{\rho}(t|\xi|)^{-\frac{1}{2}}
=t^{-\frac{\mu}{2}}|\xi|^{-\frac{\mu}{2}}\tau.
\end{equation}
For $\tau|\xi|\leq t|\xi|\leq 1$, by \eqref{equ:q11} and \eqref{equ:q15}, we have
\begin{equation}\label{equ:l8}
\begin{aligned}
\left|\Psi_1(t, \tau, \xi)\right| & \lesssim t^\rho\tau^{1-\rho}\left[(\tau|\xi|)^{-\rho}(t|\xi|)^{\rho}
+(\tau|\xi|)^{\rho}(t|\xi|)^{-\rho}\right]\\
&=t^{2\rho}\tau^{1-2\rho}+\tau\lesssim \tau.
\end{aligned}
\end{equation}
Thus it follows from \eqref{equ:l6}-\eqref{equ:l8} and \eqref{equ:q4} with $\hat{v}_0 = 0$ that for $t\geq \tau\geq1$,
\begin{equation}\label{equ:l9}
\begin{aligned}
\|\hat{v}\|_{L^2} & \lesssim\big\|t^{-\frac{\mu}{2}} \tau^{\frac{\mu}{2}}|\xi|^{-1} \hat{v}_1(\xi)\big\|_{L^2(\tau|\xi| \geqslant 1)}+\big\|t^{-\frac{\mu}{2}}|\xi|^{-\frac{\mu}{2}} \tau \hat{v}_1(\xi)\big\|_{L^2(\tau|\xi| \leqslant 1, t|\xi| \geqslant 1)}+\left\|\tau \hat{v}_1(\xi)\right\|_{L^2(t|\xi| \leqslant 1)} \\
& \lesssim t^{-\frac{\mu}{2}} \tau^{\frac{\mu}{2}} \cdot \tau\left\|(1+\tau|\xi|)^{-1} \hat{v}_1\right\|_{L^2}+\tau\left\|(1+t|\xi|)^{-1} \hat{v}_1\right\|_{L^2}+\tau\left\|(1+\tau|\xi|)(1+\tau|\xi|)^{-1} \hat{v}_1\right\|_{L^2} \\
& \lesssim t^{1-\frac{2}{1+\varepsilon_1}} \tau\left\|v_1\right\|_{\left(1+\varepsilon_1, 2\right)}+\tau t^{1-\frac{2}{1+\varepsilon_1}}\left\|v_1\right\|_{\left(1+\varepsilon_1, 2\right)}+\tau \cdot \tau^{1-\frac{2}{1+\varepsilon_1}}\left\|v_1\right\|_{\left(1+\varepsilon_1, 2\right)} \\
& \lesssim t^{\delta(\varepsilon_1)-1} \tau\left\|v_1\right\|_{\left(1+\varepsilon_1, 2\right)}
\end{aligned}
\end{equation}
and
\begin{equation}\label{equ:l9-1}
\begin{aligned}
\|t|\xi_j|\hat{v}\|_{L^2} & \lesssim\big\|t^{-\frac{\mu}{2}} \tau^{\frac{\mu}{2}}|\xi|^{-1}t|\xi_j| \hat{v}_1\big\|_{L^2(\tau|\xi| \geqslant 1)}+\big\|t^{-\frac{\mu}{2}}|\xi|^{-\frac{\mu}{2}} \tau t|\xi_j| \hat{v}_1\big\|_{L^2(\tau|\xi| \leqslant 1, t|\xi| \geqslant 1)}+\left\|\tau t|\xi_j|\hat{v}_1\right\|_{L^2(t|\xi| \leqslant 1)} \\
& \lesssim t^{-\frac{\mu}{2}} \tau^{\frac{\mu}{2}+1} \left\|(1+\tau|\xi|)^{-1} t|\xi_j|\hat{v}_1\right\|_{L^2}+\tau\left\|(1+t|\xi|)^{-1} t|\xi_j|\hat{v}_1\right\|_{L^2}+\tau\left\|(1+\tau|\xi|)^{-1} t|\xi_j|\hat{v}_1\right\|_{L^2} \\
& \lesssim  \tau \cdot \tau^{1-\frac{2}{1+\varepsilon_1}}\left\|v_1\right\|_{Z, 1, \left(1+\varepsilon_1, 2\right)}+\tau \cdot t^{1-\frac{2}{1+\varepsilon_1}}\left\|v_1\right\|_{Z, 1, \left(1+\varepsilon_1, 2\right)}+\tau \cdot \tau^{1-\frac{2}{1+\varepsilon_1}}\left\|v_1\right\|_{Z, 1, \left(1+\varepsilon_1, 2\right)} \\
& \lesssim  t^{\delta(\varepsilon_1)-1} \tau\left\|v_1\right\|_{Z, 1, \left(1+\varepsilon_1, 2\right)}.
\end{aligned}
\end{equation}
Then
\begin{equation}\label{equ:l9-2}
\||\xi_j|\hat{v}\|_{L^2}=t^{-1}\|t|\xi_j|\p_t\hat{v}\|_{L^2}
 \lesssim t^{-\frac{2}{1+\varepsilon_1}} \tau\left\|v_1\right\|_{Z, 1, \left(1+\varepsilon_1, 2\right)}\leq t^{\delta(\varepsilon_1)-1} \tau\left\|v_1\right\|_{Z, 1, \left(1+\varepsilon_1, 2\right)}.
\end{equation}
In addition, for $t|\xi|\geq\tau|\xi|\geq 1$,  we have from \eqref{equ:q9} and \eqref{equ:q12} that
\begin{equation}\label{equ:l10}
 \left|\p_t\Psi_1(t, \tau, \xi)\right|  \lesssim |\xi| t^\rho\tau^{1-\rho}(\tau|\xi|)^{-\frac{1}{2}}(t|\xi|)^{-\frac{1}{2}}=t^{-\frac{\mu}{2}}\tau^{\frac{\mu}{2}}.
\end{equation}
For $t|\xi|\geq 1$ and $ \tau|\xi|\leq 1$, by \eqref{equ:q11} and \eqref{equ:q14}-\eqref{equ:q15}, one has
\begin{equation}\label{equ:l11}
\left|\p_t\Psi_1(t, \tau, \xi)\right|  \lesssim |\xi| t^\rho\tau^{1-\rho}(\tau|\xi|)^{-\rho}(t|\xi|)^{-\frac{1}{2}}=t^{-\frac{\mu}{2}}|\xi|^{\frac{\mu}{2}}\tau^{\mu}.
\end{equation}
For $\tau|\xi|\leq t|\xi|\leq 1$, by \eqref{equ:q11} and \eqref{equ:q15}, it holds
\begin{equation}\label{equ:l12}
\begin{aligned}
\left|\p_t\Psi_1(t, \tau, \xi)\right| & \lesssim |\xi| t^\rho\tau^{1-\rho}\left[(\tau|\xi|)^{-\rho}(t|\xi|)^{\rho-1}+(\tau|\xi|)^{\rho}(t|\xi|)^{1-\rho}\right]\\
& \leq t^{-1}\tau.
\end{aligned}
\end{equation}
Hence we have
\begin{equation}\label{equ:l13}
\begin{aligned}
\|t\p_t\hat{v}\|_{L^2} & \lesssim\big\|t^{1-\frac{\mu}{2}} \tau^{\frac{\mu}{2}} \hat{v}_1(\xi)\big\|_{L^2(\tau|\xi| \geqslant 1)}+\big\|t^{1-\frac{\mu}{2}}|\xi|^{\frac{\mu}{2}} \tau^\mu \hat{v}_1(\xi)\big\|_{L^2(\tau|\xi| \leqslant 1, t|\xi| \geqslant 1)}+\left\|\tau \hat{v}_1(\xi)\right\|_{L^2(t|\xi| \leqslant 1)} \\
& \lesssim t^{-\frac{\mu}{2}} \tau^{\frac{\mu}{2}+1} \left\|(1+\tau|\xi|)^{-1} t|\xi| \hat{v}_1\right\|_{L^2}+t^{1-\f{\mu}{2}}\tau^{\f{\mu}{2}-1}\big\|\tau^{\f{\mu}{2}+1}|\xi|^{\f{\mu}{2}}(1+t|\xi|)^{-1} t|\xi|\hat{v}_1\big\|_{L^2(\tau|\xi| \leqslant 1, t|\xi| \geqslant 1)}\\
&\quad+\tau\left\|(1+\tau|\xi|)(1+\tau|\xi|)^{-1} \hat{v}_1\right\|_{L^2} \\
& \lesssim \tau \tau^{1-\frac{2}{1+\varepsilon_1}} \left\|t\p_1v_1\right\|_{\left(1+\varepsilon_1, 2\right)}+\tau \tau^{1-\frac{2}{1+\varepsilon_1}} \left\|t\p_2v_1\right\|_{\left(1+\varepsilon_1, 2\right)}+\tau t^{1-\frac{2}{1+\varepsilon_1}}\left\|t\p_1v_1\right\|_{\left(1+\varepsilon_1, 2\right)}\\
&\quad+\tau t^{1-\frac{2}{1+\varepsilon_1}}\left\|t\p_2v_1\right\|_{\left(1+\varepsilon_1, 2\right)}+\tau \cdot \tau^{1-\frac{2}{1+\varepsilon_1}}\left\|v_1\right\|_{\left(1+\varepsilon_1, 2\right)} \\
& \lesssim t^{\delta(\varepsilon_1)-1} \tau\left\|v_1\right\|_{Z, 1, \left(1+\varepsilon_1, 2\right)}
\end{aligned}
\end{equation}
and
\begin{equation}\label{equ:l14}
\|\p_t\hat{v}\|_{L^2}=t^{-1}\|t\p_t\hat{v}\|_{L^2}
 \lesssim t^{\delta(\varepsilon_1)-1} \tau\left\|v_1\right\|_{Z, 1, \left(1+\varepsilon_1, 2\right)}.
\end{equation}
By an analogous analysis in the proof process of \eqref{equ:q50}, one has that
for $j=1,2$, and $\tau|\xi|\geq1$,
\begin{equation}\label{equ:l15}
\begin{aligned}
\partial_{\xi_j}&\Psi_1(t, \tau, \xi)=\frac{i \pi}{4} t^\rho \tau^{1-\rho}\left[\frac{\tau\xi_j}{|\xi|}\big(H_{\rho-1}^{-}(\tau|\xi|)
-\frac{\rho}{\tau|\xi|} H_\rho^{-}(\tau|\xi|)\big) H_\rho^{+}(t|\xi|)+\frac{t\xi_j}{|\xi|}\left(H_{\rho-1}^{+}(t|\xi|)\right.\right. \\
& \quad-\left.\frac{\rho}{t|\xi|} H_\rho^{+}(t|\xi|)\right) H_\rho^{-}(\tau|\xi|)-\frac{\tau\xi_j}{|\xi|}\big(H_{\rho-1}^{+}(\tau|\xi|)-\frac{\rho}{\tau|\xi|} H_\rho^{+}(\tau|\xi|)\big) H_\rho^{-}(t|\xi|)\\
&\quad\left.-\frac{t\xi_j}{|\xi|}\big(H_{\rho-1}^{-}(t|\xi|)-\frac{\rho}{t|\xi|} H_\rho^{-}(t|\xi|)\big) H_\rho^{+}(\tau|\xi|)\right]\\
& =\frac{i \pi}{4}t^\rho \tau^{1-\rho}\left[\frac{\tau\xi_j}{|\xi|} H_{\rho-1}^{-}(\tau|\xi|) H_\rho^{+}(t|\xi|)+\frac{t\xi_j}{|\xi|} H_{\rho-1}^{+}(t|\xi|) H_\rho^{-}(\tau|\xi|)\right. \\
&\quad -\frac{\tau\xi_j}{|\xi|} H_{\rho-1}^{+}(\tau|\xi|) H_\rho^{-}(t|\xi|)
-\frac{t\xi_j}{|\xi|} H_{\rho-1}^{-}(t|\xi|) H_\rho^{+}(\tau|\xi|)-\frac{\xi_j\rho}{|\xi|^2} H_\rho^{-}(\tau|\xi|) H_\rho^{+}(t|\xi|)\\
& \left.\quad +\frac{\xi_j\rho}{|\xi|^2} H_\rho^{+}(t|\xi|) H_\rho^{-}(\tau|\xi|) +\frac{\xi_j\rho}{|\xi|^2} H_\rho^{+}(\tau|\xi|) H_\rho^{-}(t|\xi|)+\frac{\xi_j\rho}{|\xi|^2} H_\rho^{-}(t|\xi|) H_\rho^{+}(\tau|\xi|)\right]
\end{aligned}
\end{equation}
and for $\tau|\xi|\leq 1$,
\begin{equation}\label{equ:l17}
\begin{aligned}
\partial_{\xi_j}& \Psi_1(t, \tau, \xi)=\frac{ \pi}{2}\csc(\rho\pi) t^\rho \tau^{1-\rho}\left[\frac{\tau\xi_j}{|\xi|}\big(J_{-\rho-1}(\tau|\xi|)+\frac{\rho}{\tau|\xi|} J_{-\rho}(\tau|\xi|)\big) J_\rho(t|\xi|)+\frac{t\xi_j}{|\xi|}\bigg(J_{\rho-1}(t|\xi|)\right. \\
& \quad-\frac{\rho}{t|\xi|} J_\rho(t|\xi|)\bigg) J_{-\rho}(\tau|\xi|)-\frac{\tau\xi_j}{|\xi|}\big(J_{\rho-1}(\tau|\xi|)-\frac{\rho}{\tau|\xi|} J_\rho(\tau|\xi|)\big) J_{-\rho}(t|\xi|) \\
& \quad\left. -\frac{t\xi_j}{|\xi|}\big(J_{-\rho-1}(t|\xi|)+\frac{\rho}{t|\xi|} J_\rho(t|\xi|)\big) J_\rho(\tau|\xi|)\right]  \\
& =\frac{\pi}{2}\csc(\rho\pi)t^\rho \tau^{1-\rho}\left[\frac{\tau\xi_j}{|\xi|} J_{-\rho-1}(\tau|\xi|) J_\rho(t|\xi|)+\frac{t\xi_j}{|\xi|} J_{\rho-1}(t|\xi|) J_{-\rho}(\tau|\xi|)\right. \\
&\quad -\frac{\tau\xi_j}{|\xi|} J_{\rho-1}(\tau|\xi|) J_{-\rho}(t|\xi|)
-\frac{t\xi_j}{|\xi|} J_{-\rho-1}(t|\xi|) J_\rho(\tau|\xi|)+\frac{\xi_j\rho}{|\xi|^2} J_{-\rho}(\tau|\xi|) J_\rho(t|\xi|)\\
& \left.\quad -\frac{\xi_j\rho}{|\xi|^2} J_\rho(t|\xi|) J_{-\rho}(\tau|\xi|) +\frac{\xi_j\rho}{|\xi|^2} J_\rho(\tau|\xi|) J_{-\rho}(t|\xi|)-\frac{\xi_j\rho}{|\xi|^2} J_{-\rho}(t|\xi|) J_\rho(\tau|\xi|)\right].
\end{aligned}
\end{equation}
As in \eqref{equ:q61} with $\hat{v}_0=0$, it follows from \eqref{equ:l15} that
\begin{equation}\label{equ:l19}
\begin{aligned}
&\left\|\xi_1\p_{\xi_1}\hat{v}+\xi_2\p_{\xi_2}\hat{v}\right\|_{L^2(\tau|\xi|\geq1)}\\
&=\|\xi_1(\p_{\xi_1}\Psi_1)\hat{v_1}(\xi)+\xi_1\Psi_1\p_{\xi_1}\hat{v_1}(\xi)
+\xi_2(\p_{\xi_2}\Psi_1)\hat{v_1}(\xi)+\xi_2\Psi_1\p_{\xi_2}\hat{v_1}(\xi)\|_{L^2(\tau|\xi|\geq1)}\\
& \lesssim\big\|t^{1-\frac{\mu}{2}} \tau^{\frac{\mu}{2}} \hat{v}_1(\xi)\big\|_{L^2(\tau|\xi| \geqslant 1)}
+\big\|t^{-\frac{\mu}{2}} \tau^{\frac{\mu}{2}}|\xi|^{-1}\left(\xi_1 \partial_{ \xi_1}+\xi_2\partial_ {\xi_2}\right) \hat{v}_1\left(\xi\right)\big\|_{L^2(\tau|\xi| \geqslant 1)}\\
& \lesssim t^{-\frac{\mu}{2}} \tau^{\frac{\mu}{2}+1} \left\|(1+\tau|\xi|)^{-1} t|\xi| \hat{v}_1\right\|_{L^2}+\tau\left\|(1+\tau|\xi|)^{-1}\left(\xi_1 \partial_{ \xi_1}+\xi_2 \partial_ {\xi_2}\right) \hat{v}_1\left(\xi\right)\right\|_{L^2}\\
&\lesssim  \tau \tau^{1-\frac{2}{1+\varepsilon_1}} \left\|t\p_1v_1\right\|_{\left(1+\varepsilon_1, 2\right)}+\tau \tau^{1-\frac{2}{1+\varepsilon_1}} \left\|t\p_2v_1\right\|_{\left(1+\varepsilon_1, 2\right)}+\tau \tau^{1-\frac{2}{1+\varepsilon_1}} \left\|(x_1\p_1+x_2\p_2)v_1\right\|_{\left(1+\varepsilon_1, 2\right)}\\
&\quad+\tau \tau^{1-\frac{2}{1+\varepsilon_1}} \left\|v_1\right\|_{\left(1+\varepsilon_1, 2\right)}\\
&\lesssim t^{1-\frac{2}{1+\varepsilon_1}} \tau\left\|v_1\right\|_{Z, 1, \left(1+\varepsilon_1, 2\right)}.
\end{aligned}
\end{equation}
Similarly, by \eqref{equ:l17}, one can easily obtain
\begin{equation}\label{equ:l20-1}
\left\|\xi_1\p_{\xi_1}\hat{v}+\xi_2\p_{\xi_2}\hat{v}\right\|_{L^2(\tau|\xi|\leq1)}
\lesssim t^{1-\frac{2}{1+\varepsilon_1}} \tau\left\|v_1\right\|_{Z, 1, \left(1+\varepsilon_1, 2\right)}.
\end{equation}
Therefore, by \eqref{equ:l9}, \eqref{equ:l13} and \eqref{equ:l19}-\eqref{equ:l20-1}, we arrive at
$$
\|\widehat{L_0v}\|_{L^2}\lesssim 2\|\hat{v}\|_{L^2(\R^2)}+\|t\p_t\hat{v}\|_{L^2(\R^2)}+\|\xi_1\p_{\xi_1}\hat{v}+\xi_2\p_{\xi_2}\hat{v}\|_{L^2(\R^2)}\lesssim t^{1-\frac{2}{1+\varepsilon_1}} \tau\left\|v_1\right\|_{Z, 1, \left(1+\varepsilon_1, 2\right)}.
$$
Meanwhile, by \eqref{equ:l9-2} and \eqref{equ:l14}, one has
$$
\|\widehat{\p v}\|_{L^2}\lesssim \sum_{j=1}^2\||\xi_j|\hat{v}\|_{L^2}+\|\p_t\hat{v}\|_{L^2}\lesssim t^{1-\frac{2}{1+\varepsilon_1}} \tau\left\|v_1\right\|_{Z, 1, \left(1+\varepsilon_1, 2\right)}.
$$
In addition, as in \eqref{equ:q63} and \eqref{equ:q71}, it follows from \eqref{equ:l9-1} and \eqref{equ:l13} that
$$
\sum_{j=1}^2\|\widehat{L_jv}\|_{L^2}\lesssim \|t|\xi_1|\hat{v}\|_{L^2(\R^2)}+\|t|\xi_2|\hat{v}\|_{L^2(\R^2)}+\|t\p_t\hat{v}\|_{L^2(\R^2)}\lesssim t^{1-\frac{2}{1+\varepsilon_1}} \tau\left\|v_1\right\|_{Z, 1, \left(1+\varepsilon_1, 2\right)}
$$
and
$$
\|\widehat{\Omega_{12}v}\|_{L^2}\lesssim \left\|t|\xi_2|\p_t\hat{v}\right\|_{L^2(\R^2)}+\left\|t|\xi_1|\p_t\hat{v}\right\|_{L^2(\R^2)}\lesssim t^{1-\frac{2}{1+\varepsilon_1}} \tau\left\|v_1\right\|_{Z, 1, \left(1+\varepsilon_1, 2\right)}.
$$
Collecting all the results above yields \eqref{equ:l3}.

Next we estimate  $\|\p v\|_{Z, 1, 2}$. For this purpose, by an analogous analysis in the proof procedure
of \eqref{equ:q74}, it suffices only to treat $\|t|\xi|\p_t\hat{v}\|_{L^2}$, $\||\xi_j||\xi|\hat{v}\|_{L^2}(j=1, 2)$ and $\||\xi|(\xi_1\p_{\xi_1}\hat{v}+\xi_2\p_{\xi_2})\hat{v}\|_{L^2(\R^2)}$.

It follows from \eqref{equ:l10}-\eqref{equ:l12} that
\begin{equation}\label{equ:l20}
\begin{aligned}
\|t|\xi|\p_t\hat{v}\|_{L^2} & \lesssim\big\|t^{1-\frac{\mu}{2}} \tau^{\frac{\mu}{2}} |\xi|\hat{v}_1(\xi)\big\|_{L^2(\tau|\xi| \geqslant 1)}+\big\|t^{1-\frac{\mu}{2}}|\xi|^{\frac{\mu}{2}} \tau^\mu |\xi| \hat{v}_1(\xi)\big\|_{L^2(\tau|\xi| \leqslant 1, t|\xi| \geqslant 1)}
+\left\|\tau |\xi|\hat{v}_1(\xi)\right\|_{L^2(t|\xi| \leqslant 1)} \\
& \lesssim t^{-1} \tau \big\|t^{1-\f{\mu}{2}}\tau^{\f{\mu}{2}-1} t|\xi| \hat{v}_1\big\|_{L^2(\tau|\xi| \geqslant 1)}
+t^{-1} \tau \big\|t^{1-\f{\mu}{2}}\tau^{\f{\mu}{2}-1}(\tau|\xi|)^{\f{\mu}{2}} t|\xi| \hat{v}_1\big\|_{L^2(\tau|\xi| \leqslant 1, t|\xi| \geqslant 1)}\\
&\quad+t^{-1}\left\|\tau \hat{v}_1(\xi)\right\|_{L^2(t|\xi| \leqslant 1)} \\
& \lesssim t^{-1}\tau\|v_1\|_{Z, 1, 2}.
\end{aligned}
\end{equation}
By \eqref{equ:l6}-\eqref{equ:l8}, one has that for $j=1, 2$,
\begin{equation}\label{equ:l21}
\begin{aligned}
\||\xi||\xi_j|\hat{v}\|_{L^2}&\lesssim\big\|t^{-\frac{\mu}{2}} \tau^{\frac{\mu}{2}}|\xi|^{-1}|\xi||\xi_j| \hat{v}_1\big\|_{L^2(\tau|\xi| \geqslant 1)}+\big\|t^{-\frac{\mu}{2}}|\xi|^{-\frac{\mu}{2}} \tau |\xi||\xi_j| \hat{v}_1\big\|_{L^2(\tau|\xi| \leqslant 1, t|\xi| \geqslant 1)}\\
&\quad +\left\|\tau |\xi||\xi_j|\hat{v}_1\right\|_{L^2(t|\xi| \leqslant 1)} \\
& \lesssim t^{-1} \tau \big\|t^{1-\f{\mu}{2}}\tau^{\f{\mu}{2}-1} |\xi_j| \hat{v}_1\big\|_{L^2(\tau|\xi| \geqslant 1)}+t^{-1} \tau \big\|(t|\xi|)^{1-\f{\mu}{2}} |\xi_j| \hat{v}_1\big\|_{L^2(\tau|\xi| \leqslant 1, t|\xi| \geqslant 1)}\\
&\quad+t^{-1}\tau\left\| |\xi_j|\hat{v}_1(\xi)\right\|_{L^2(t|\xi| \leqslant 1)} \\
& \lesssim t^{-1}\tau\|v_1\|_{Z, 1, 2}.
\end{aligned}
\end{equation}
As in \eqref{equ:l19}, by \eqref{equ:l15}, one obtains
\begin{equation}\label{equ:l22}
\begin{aligned}
&\||\xi|(\xi_1\p_{\xi_1}\hat{v}+\xi_2\p_{\xi_2})\hat{v}\|_{L^2(\R^2)}
\\
& \lesssim\big\|t^{1-\frac{\mu}{2}} \tau^{\frac{\mu}{2}}|\xi| \hat{v}_1(\xi)\big\|_{L^2(\tau|\xi| \geqslant 1)}
+\|t^{-\frac{\mu}{2}} \tau^{\frac{\mu}{2}}\left(\xi_1 \partial_{ \xi_1}+\xi_2 \partial_ {\xi_2}\right) \hat{v}_1\left(\xi\right)\|_{L^2(\tau|\xi| \geqslant 1)}\\
&\quad + \|t^{-\f{\mu}{2}+1}\tau|\xi|^{-\f{\mu}{2}+2}\hat{v}_1(\xi)\|_{L^2(\tau|\xi| \leq 1, t|\xi|\geq1)}+t^{-\f{\mu}{2}}\tau\|(\xi_1\p_{\xi_1}+\xi_2\p_{\xi_2})\hat{v}_1(\xi)\|_{L^2(\tau|\xi| \leq 1, t|\xi|\geq1)}\\
&\quad+
\|\tau|\xi|\hat{v}_1(\xi)\|_{L^2(t|\xi| \leq 1)}+t^{-1}\tau\|(\xi_1\p_{\xi_1}+\xi_2\p_{\xi_2})\hat{v}_1(\xi)\|_{L^2(t|\xi|\leq1)}\\
& \lesssim t^{-1} \tau \left\| t(|\xi_1|+|\xi_2|) \hat{v}_1\right\|_{L^2}+t^{-1}\tau\left\|\left(\xi_1 \partial_{ \xi_1}+\xi_2\partial_ {\xi_2}\right) \hat{v}_1\right\|_{L^2}\\
& \lesssim t^{-1}\tau\|v_1\|_{Z, 1, 2}.
\end{aligned}
\end{equation}
Therefore, in light of \eqref{equ:l20}-\eqref{equ:l22}, then \eqref{equ:l4} and further Lemma \ref{lem4} are proved.
\end{proof}

\section{Proof of Theorem~\ref{YH-1}}
Based on Lemmas \ref{lem2}-\ref{lem3} and Lemma \ref{lem4}, as in \cite{LX} or $\S 4$ of \cite{Rei1},
for any $T>1$, one can introduce the function space $X(T)$ with norm
\begin{equation}\label{L1}
\|u\|_{X(T)}:=\sup _{t \in[1, T]}\big(t^{-(\delta-1)}\|u\|_{Z, 1, 2}+t\|\p u\|_{Z, 1, 2}\big),
\end{equation}
where $\delta=\delta(\varepsilon_1)$ is defined by Lemma \ref{lem4}.

Based on Duhamel's principle and the  expression \eqref{equ:q4}, we define the nonlinear mapping $\mathcal{N}$ by
\begin{equation}\label{L2}
\begin{aligned}
\mathcal{N} u(t, x)&=\ve\Psi_{0}(t, 1, D)u_{0}(x)+\ve\Psi_{1}(t, 1, D) u_{1}(x)
+\int_{1}^{t} \Psi_{1}(t, \tau, D)|u(\tau, x)|^{p} \mathrm{d} \tau  \\
&:=\ve u^{lin}(t, x)+u^{non}(t, x)
\end{aligned}
\end{equation}
and introduce the following closed subset of $X(T)$
$$
X(T, M)=\left\{u \in X(T):\|u\|_{X(T)} \leq M\ve\right\},
$$
where $M$ is a fixed positive constant to be determined (see \eqref{L6} below). We now have
\begin{proposition}\label{prop-1}
For any $\varepsilon_1\in(0,1)$, $T>1$ and  $u, v \in X(T)$, the following estimates hold:
\begin{equation}\label{L3}
\|\mathcal{N} u\|_{X(T)} \leq C\big(\ve\left\|\left(u_{0}, u_{1}\right)\right\|_{\mathcal{A}}+\|u\|_{X(T)}^{p}\big)
\end{equation}
and
\begin{equation}\label{L4}
\|\mathcal{N} u-\mathcal{N} v\|_{X(T)} \leq C\|u-v\|_{X(T)}\big(\|u\|_{X(T)}^{p-1}+\|v\|_{X(T)}^{p-1}\big),
\end{equation}
where and below the constant $C>0$ is independent of $\ve>0$ and $T>1$, and
\begin{equation}\label{L5}
\begin{aligned}
\left\|\left(u_{0}, u_{1}\right)\right\|_{\mathcal{A}}=&\|u_0\|_{Z, 1, 2}+\|\nabla u_0\|_{Z, 1, 2}
+\|u_1\|_{Z, 1, 2}+\|u_1\|_{Z, 1, (1+\varepsilon_1, 2)}\\
&+\|u_0+u_1\|_{Z, 1, (1+\varepsilon_1, 2)}+\|u_0+u_1\|_{Z, 1, 2}.
\end{aligned}
\end{equation}
\end{proposition}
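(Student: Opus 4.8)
First I would decompose $\mathcal{N}u=\ve u^{lin}+u^{non}$ with $u^{lin}(t,x)=\Psi_0(t,1,D)u_0(x)+\Psi_1(t,1,D)u_1(x)$ and $u^{non}(t,x)=\int_1^t\Psi_1(t,\tau,D)|u(\tau,x)|^p\,d\tau$, and estimate the two pieces in each component $t^{-(\delta-1)}\|\cdot\|_{Z,1,2}$ and $t\|\p\cdot\|_{Z,1,2}$ of the $X(T)$-norm. For the linear piece, $u^{lin}$ is exactly the solution of the homogeneous problem \eqref{equ:q16} with data $(u_0,u_1)$ at $t=1$, so Lemma \ref{lem2} and Lemma \ref{lem3} apply directly; since $2<\mu<3$ and $0<\delta<1$ we have $t^{-\mu/2}\le t^{-1}\le t^{\delta-1}$ for $t\ge1$, hence every term on the right of Lemma \ref{lem2} is bounded by $t^{\delta-1}\|(u_0,u_1)\|_{\mathcal{A}}$ and every term on the right of Lemma \ref{lem3} by $t^{-1}\|(u_0,u_1)\|_{\mathcal{A}}$, giving $\|\ve u^{lin}\|_{X(T)}\lesssim\ve\|(u_0,u_1)\|_{\mathcal{A}}$.

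For the nonlinear piece, I would apply Minkowski's integral inequality in the $Z,1,2$ norm (no boundary term arises when a $\p_t$ hits the upper limit $\tau=t$, since $\Psi_1(t,t,\xi)\equiv0$ by \eqref{equ:q6}), and for each fixed $\tau\ge1$ invoke Lemma \ref{lem4} with initial time $\tau$ and Cauchy data $(0,|u(\tau,\cdot)|^p)$. This yields
\begin{align*}
t^{-(\delta-1)}\|u^{non}(t,\cdot)\|_{Z,1,2}&\lesssim\int_1^t\tau\,\big\||u(\tau,\cdot)|^p\big\|_{Z,1,(1+\ve_1,2)}\,d\tau,\\
t\,\|\p u^{non}(t,\cdot)\|_{Z,1,2}&\lesssim\int_1^t\tau\,\big\||u(\tau,\cdot)|^p\big\|_{Z,1,2}\,d\tau,
\end{align*}
so matters reduce to bounding the two norms of the nonlinearity $|u(\tau,\cdot)|^p$ by a suitably decaying power of $\tau$ times $\|u\|_{X(T)}^p$.

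The nonlinearity estimate is the core of the argument. Using $Z(|u|^p)=O(|u|^{p-1}|Zu|)$, Hölder's inequality in the mixed-norm spaces $\|\cdot\|_{(a,b)}$ (splitting the radial weight $r^{1/a}$ among the factors), the one-dimensional Sobolev embedding on $S^1$ — where $\Omega_{12}$ acts as the angular derivative and is thus already present in the $Z$-norm — a radial Sobolev/interpolation step controlling radial-$L^\infty$ and weighted radial-$L^p$ norms through $\|\p u\|_{Z,1,2}$ (together with the $H^2$-regularity near the origin), and the support property $\operatorname{supp}u(\tau,\cdot)\subset\{|x|\le\tau\}$ inherited from the compactly supported data, one obtains — after inserting $\|u(\tau,\cdot)\|_{Z,1,2}\lesssim\tau^{\delta-1}\|u\|_{X(T)}$ and $\|\p u(\tau,\cdot)\|_{Z,1,2}\lesssim\tau^{-1}\|u\|_{X(T)}$ — estimates of the form $\||u(\tau,\cdot)|^p\|_{Z,1,(1+\ve_1,2)}\lesssim\tau^{-\beta_1}\|u\|_{X(T)}^p$ and $\||u(\tau,\cdot)|^p\|_{Z,1,2}\lesssim\tau^{-\beta_2}\|u\|_{X(T)}^p$ with $\beta_i=\beta_i(p,\ve_1)>2$ once $\ve_1$ is chosen small enough depending on $p$ — and this is possible exactly because $p>p_f(2)=2$. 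Then $\int_1^t\tau^{1-\beta_i}\,d\tau$ is bounded uniformly in $t\le T$, so $\|u^{non}\|_{X(T)}\lesssim\|u\|_{X(T)}^p$, and together with the linear estimate this proves \eqref{L3}; the membership $\mathcal{N}u\in X(T)$ then follows from the same bounds and routine continuity in $t$.

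The Lipschitz estimate \eqref{L4} is handled in the same manner: $\mathcal{N}u-\mathcal{N}v=\int_1^t\Psi_1(t,\tau,D)\big(|u(\tau,x)|^p-|v(\tau,x)|^p\big)\,d\tau$, and using $\big||u|^p-|v|^p\big|\lesssim(|u|^{p-1}+|v|^{p-1})|u-v|$ and, for the action of $Z$, the further inequality $\big||u|^{p-1}-|v|^{p-1}\big|\lesssim(|u|^{p-2}+|v|^{p-2})|u-v|$ (valid since $p>2$), the identical Hölder/Sobolev/support machinery gives the nonlinearity bound with $\|u\|_{X(T)}^p$ replaced by $\|u-v\|_{X(T)}\big(\|u\|_{X(T)}^{p-1}+\|v\|_{X(T)}^{p-1}\big)$, and the same time integration concludes. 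The step I expect to be the main obstacle is the nonlinearity estimate of the third paragraph: since only a single vector field is available, genuine $L^\infty(\mathbb{R}^2)$ bounds on $u$ are not for free, so one must extract the needed decay out of $|u|^p$ through the angular variable (via $\Omega_{12}$) and the faster-decaying radial derivative (via $\p u$) while bookkeeping all the weights, and one must verify that the threshold $p>2$ is precisely what makes the Duhamel integral convergent — everything else being a direct application of the lemmas of Sections \ref{sec3} and \ref{sec4}.
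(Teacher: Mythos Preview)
Your overall strategy is exactly the one the paper follows: split $\mathcal{N}u$ into linear and Duhamel pieces, apply Lemmas~\ref{lem2}--\ref{lem3} to $u^{lin}$ and Lemma~\ref{lem4} under the integral to $u^{non}$, then reduce everything to a pointwise-in-$\tau$ estimate of $\||u(\tau,\cdot)|^p\|_{Z,1,(1+\ve_1,2)}$ and $\||u(\tau,\cdot)|^p\|_{Z,1,2}$ via mixed-norm H\"older and angular Sobolev on $S^1$, checking at the end that the resulting $\tau$-integral converges precisely when $p>2$ and $\ve_1$ is small.

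The one place where you diverge from the paper is your anticipated ``main obstacle''. You write that genuine $L^\infty(\mathbb R^2)$ control on $u$ is not for free because only one vector field is available, and plan to work around this through radial--angular bookkeeping. In fact the $X(T)$-norm carries \emph{both} $\|u\|_{Z,1,2}$ and $\|\p u\|_{Z,1,2}$, which together amount to two orders of $Z$-regularity; the paper therefore applies the Klainerman--Sobolev inequality directly (see \eqref{L12}) to get $\|u(\tau,\cdot)\|_\infty\lesssim\tau^{1/2}\big(\|u\|_{Z,1,2}+\|\p u\|_{Z,1,2}\big)$. Feeding this into the H\"older split $\||u|^{p-1}\|_{(\tilde q,\infty)}\lesssim\|u\|_\infty^{\,p-1-\kappa}\|u\|_{(2,\infty)}^{\kappa}$ with $\kappa=\f{1-\ve_1}{1+\ve_1}$ yields the exponent $1+\frac{p-1-\kappa}{2}+p(\delta-1)$, which drops below $-1$ exactly for $p>2$ once $\ve_1$ is chosen small. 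So the step you flagged as the hardest is in fact handled by a single inequality already available from the $X(T)$-norm; your more hands-on radial plan would likely also work but is not needed.
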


\begin{proof}
By \eqref{equ:q17} and \eqref{equ:l3}, one has
\begin{equation}\label{L8}
\begin{aligned}
\|\mathcal{N} u(t, \cdot)\|_{Z, 1, 2} &\lesssim t^{-\f{\mu}{2}}\ve\left\|u_0\right\|_{Z, 1, 2}
+t^{\delta(\varepsilon_1)-1}\ve\left\|u_1\right\|_{Z, 1,(1+\varepsilon_1, 2)}
+t^{\delta(\varepsilon_1)-1}\ve\left\|u_0+u_1\right\|_{Z, 1,(1+\varepsilon_1, 2)}\\
&\quad+ t^{\delta(\varepsilon_1)-1}\int_{1}^{t} \tau\left\||u(\tau, \cdot)|^p\right\|_{Z, 1,(1+\varepsilon_1, 2)}\mathrm{d}\tau,
\end{aligned}
\end{equation}
where $\delta=\delta(\varepsilon_1)$ is defined by Lemma \ref{lem4}. It follows from Lemma 2.1 in \cite{LX} that
the following H\"{o}lder's inequality holds
\begin{equation}\label{L9}
\|fg\|_{(p,q)}\lesssim\|f\|_{(p_1, q_1)}\|g\|_{(p_2, q_2)},
\end{equation}
where $\f{1}{p}=\f{1}{p_1}+\f{1}{p_2}\leq 1$ and $\f{1}{q}=\f{1}{q_1}+\f{1}{q_2}\leq 1$.
Then it follows from \eqref{L9} and  direct computation that
\begin{equation}\label{L13}
\left\||u(\tau, \cdot)|^p\right\|_{Z, 1,(1+\varepsilon_1, 2)} \lesssim\left\||u|^{p-1}\right\|_{(\tilde{q}, 2)}\|u(\tau, \cdot)\|_{Z, 1,2}\leq \left\||u|^{p-1}\right\|_{(\tilde{q}, \infty)}\|u(\tau, \cdot)\|_{Z, 1,2},
\end{equation}
where $\tilde{q}=\tilde{q}(\varepsilon_1)\in(2, \infty)$ satisfies
\begin{equation}\label{L14-1}
\f{1}{1+\varepsilon_1}=\f{1}{2}+\f{1}{\tilde{q}}.
\end{equation}
Denote  $\kappa=\kappa(\varepsilon_1)=\f{2}{\tilde{q}}=\f{1-\varepsilon_1}{1+\varepsilon_1}<1$.
Then, by $p>2>1+\kappa$ and H\"{o}lder's inequality \eqref{L9}, we can arrive at
\begin{equation}\label{L10}
\left\||u(\tau, \cdot)|^{p-1}\right\|_{(\tilde{q}, \infty)} \lesssim\|u(\tau, \cdot)\|_{\infty}^{p-1-\kappa}\|u(\tau, \cdot)\|_{(2, \infty)}^\kappa.
\end{equation}
In addition, the Sobolev embedding theorem on $S^1$ implies
\begin{equation}\label{L11}
\begin{aligned}
& \|u(\tau, \cdot)\|_{(2, \infty)}^2 \lesssim \int_0^{\infty}\left\|u(r\xi)\right\|_{H^{1}\left(S^{1}\right)}^2 r \mathrm{d} r \\
& \leq \int_{\mathbb{R}^2} u^2 \mathrm{d}x_1 \mathrm{d}x_2  +\int_0^{\infty}\int_0^{2 \pi}[\p_ {x_1} u(rcos\theta, rsin\theta) \cdot(-rsin\theta) +\p_{x_2} u(rcos\theta,rsin\theta)\cdot(rcos\theta)]^2r \mathrm{d} r \mathrm{d}\theta \\
& =\int_{\mathbb{R}^2} u^2 \mathrm{d} x_1 \mathrm{d} x_2  +\int_{\mathbb{R}^2}(x_1\p_{x_2}u-x_2\p_{x_1}u)^2\mathrm{d} x_1 \mathrm{d} x_2 \\
& \lesssim \|u\|_{Z, 1, 2}^2.
\end{aligned}
\end{equation}
On the other hand, it follows from the Klainerman-Sobolev inequality in \cite{KS} that
\begin{equation}\label{L12}
\|u(\tau, \cdot)\|_{\infty} \lesssim \tau^{\frac{1}{2}}\left(\|u(\tau, \cdot)\|_{Z, 1,2}+\|\p u(\tau, \cdot)\|_{Z, 1,2}\right).
\end{equation}
Substituting \eqref{L11} and \eqref{L12} into \eqref{L10} yields
$$
\left\||u(\tau, \cdot)|^{p-1}\right\|_{(\tilde{q}, \infty)} \lesssim
\tau^{\frac{p-1-\kappa}{2}}\left(\|u(\tau, \cdot)\|_{Z, 1,2}+\|\p  u(\tau, \cdot)\|_{Z, 1,2}\right)^{p-1-\kappa}\|u\|_{Z, 1, 2}^\kappa.
$$
By the definition of norm \eqref{L1} in space $X(T)$, it holds that
$$
\left\||u(\tau, \cdot)|^{p-1}\right\|_{(\tilde{q}, \infty)}\lesssim \tau^{\frac{p-1-\kappa}{2}+(p-1-\kappa)(\delta-1)+\kappa(\delta-1)}\|u\|_{X(T)}^{p-1}.
$$
Substituting it into \eqref{L13} and then \eqref{L8} implies
\begin{equation}\label{L14}
\begin{aligned}
\|\mathcal{N} u(t, \cdot)\|_{Z, 1, 2} &\lesssim t^{-\f{\mu}{2}}\ve\left\|u_0\right\|_{Z, 1, 2}
+t^{\delta-1}\ve\left\|u_1\right\|_{Z, 1,(1+\varepsilon_1, 2)}
+t^{\delta-1}\ve\left\|u_0+u_1\right\|_{Z, 1,(1+\varepsilon_1, 2)}\\
&\quad+ t^{\delta-1}\|u\|_{X(T)}^p\int_{1}^{t} \tau^{1+\frac{p-1-\kappa}{2}+(p-1-\kappa)(\delta-1)
+(\kappa+1)(\delta-1)}\mathrm{d}\tau,
\end{aligned}
\end{equation}
Note that $\delta=\f{2\varepsilon_1}{1+\varepsilon_1}\rightarrow 0+$ and $\kappa=\f{1-\varepsilon_1}{1+\varepsilon_1}\rightarrow 1-$ as $\varepsilon_1\rightarrow0+$. Therefore, for $p>2$, one can choose a sufficiently small $\varepsilon_1>0$ such that
$$
1+\frac{p-1-\kappa}{2}+p(\delta-1)<-1.
$$
Then
\begin{equation}\label{L15}
\begin{aligned}
t^{-(\delta-1)}\|\mathcal{N} u(t, \cdot)\|_{Z, 1, 2} &\lesssim \ve\left\|u_0\right\|_{Z, 1, 2}
+\ve\left\|u_1\right\|_{Z, 1,(1+\varepsilon_1, 2)}+\ve\left\|u_0+u_1\right\|_{Z, 1,(1+\varepsilon_1, 2)}+ \|u\|_{X(T)}^p\\
& \lesssim \ve\left\|\left(u_{0}, u_{1}\right)\right\|_{\mathcal{A}}+ \|u\|_{X(T)}^p.
\end{aligned}
\end{equation}
Similarly, by \eqref{equ:q74} and \eqref{equ:l4}, we have
\begin{equation}\label{L16}
\begin{aligned}
\|\p\mathcal{N}u(t, \cdot)\|_{Z, 1, 2} &\lesssim t^{-\f{\mu}{2}}\ve\left\|\nabla u_0\right\|_{Z, 1, 2}
+t^{-\f{\mu}{2}}\ve\left\| u_1\right\|_{Z, 1, 2}+t^{-1}\ve\left\|u_0+u_1\right\|_{Z, 1,2}\\
&\quad+ t^{-1}\int_{1}^{t} \tau\left\||u(\tau, \cdot)|^p\right\|_{Z, 1,2}\mathrm{d}\tau.
\end{aligned}
\end{equation}
As treated in \eqref{L13}, one has
\begin{equation}\label{L17}
\left\||u(\tau, \cdot)|^p\right\|_{Z, 1,2} \lesssim\left\||u(\tau, \cdot)|^{p-1}\right\|_{2+\varepsilon_2}\|u(\tau, \cdot)\|_{Z, 1, \bar{q}},
\end{equation}
where $\bar{q}=\bar{q}(\varepsilon_2)\in(2, \infty)$ fulfills
\begin{equation}\label{L17-1}
\f{1}{2}=\f{1}{2+\varepsilon_2}+\f{1}{\bar{q}}.
\end{equation}
In addition, it follows from Sobolev embedding theorem that
\begin{equation}\label{L18}
\|u(\tau, \cdot)\|_{Z, 1, \bar{q}} \lesssim\|u(\tau, \cdot)\|_{Z, 1,2}+\|\p u(\tau, \cdot)\|_{Z, 1,2} .
\end{equation}
Moreover, according to $p>2$, one has
$$
\begin{aligned}
\left\||u(\tau, \cdot)|^{p-1}\right\|_{2+\varepsilon_2}  \leq\|u(\tau, \cdot)\|_{\left(2+\varepsilon_2\right)(p-1)}^{p-1} &\lesssim\tau^{-\left(\frac{1}{2}-\frac{1}{\left(2+\varepsilon_2\right)(p-1)}\right)(p-1)}\|u(\tau, \cdot)\|_{Z, 1,2}^{p-1} \\
& \leq\|u(\tau, \cdot)\|_{Z, 1,2}^{p-1}.
\end{aligned}
$$
This, together with \eqref{L18} and \eqref{L17}, yields
$$
\begin{aligned}
\|\p \mathcal{N}u(t, \cdot)\|_{Z, 1, 2} &\lesssim t^{-\f{\mu}{2}}\ve\left\|\nabla u_0\right\|_{Z, 1, 2}
+t^{-\f{\mu}{2}}\ve\left\|u_1\right\|_{Z, 1, 2}+t^{-1}\ve\left\|u_0+u_1\right\|_{Z, 1,2}\\
&\quad+ t^{-1}\|u\|_{X(T)}^p\int_{1}^{t} \tau^{1+p(\delta-1)}\mathrm{d}\tau.
\end{aligned}
$$
For $p>2$, we choose a fixed small constant $\varepsilon_1>0$ such that $1+p(\delta(\varepsilon_1)-1)<-1$ holds.
Thus,
\begin{equation}\label{L19}
\begin{aligned}
t \|\p \mathcal{N} u(t, \cdot)\|_{Z, 1,2} &\lesssim \ve\left\|\nabla u_0\right\|_{Z, 1,2}+\ve\left\|u_1\right\|_{Z, 1,2}
+\ve\left\|u_0+u_1\right\|_{Z, 1,2} +\|u\|_{X(T)}^p\\
&  \lesssim \ve\left\|\left(u_{0}, u_{1}\right)\right\|_{\mathcal{A}}+ \|u\|_{X(T)}^p.
\end{aligned}
\end{equation}
Combining \eqref{L15} and \eqref{L19} derives \eqref{L3}.

We next prove the contractive estimate \eqref{L4}. As treated in \eqref{L8}, we have that for $u, v\in X(T)$ and $p>2$,
\begin{equation}\label{L20}
\begin{aligned}
\|(\mathcal{N} u-\mathcal{N} v)(t, \cdot)\|_{Z, 1, 2}& \lesssim t^{\delta-1}\int_{1}^{t} \tau\left\||u-v|(\tau, \cdot)(|u|+|v|)^{p-1}(\tau, \cdot)\right\|_{Z, 1,(1+\varepsilon_1, 2)}\mathrm{d}\tau\\
& \lesssim t^{\delta-1}\int_{1}^{t} \tau \left\|(|u|+|v|)^{p-1}(\tau, \cdot)\right\|_{(\tilde{q}, \infty)}\||u-v|(\tau, \cdot)\|_{Z, 1,2}\mathrm{d}\tau\\
&\lesssim t^{\delta-1} \|u-v\|_{X(T)}\big(\|u\|_{X(T)}^{p-1}+\|v\|_{X(T)}^{p-1}\big)\int_{1}^{t} \tau^{1+\frac{p-1-\kappa}{2}+p(\delta-1)}\mathrm{d}\tau\\
&\lesssim t^{\delta-1} \|u-v\|_{X(T)}\big(\|u\|_{X(T)}^{p-1}+\|v\|_{X(T)}^{p-1}\big),
\end{aligned}
\end{equation}
where $\tilde{q}$ is defined by \eqref{L14-1}. On the other hand, employing the same approach for deriving \eqref{L19}, one can obtain that for  $u, v\in X(T)$ and $p>2$,
\begin{equation}\label{L21}
\begin{aligned}
\|\p(\mathcal{N} u-\mathcal{N} v)(t, \cdot)\|_{Z, 1, 2}&
\lesssim t^{-1}\int_{1}^{t} \tau\left\||u-v|(\tau, \cdot)(|u|+|v|)^{p-1}(\tau, \cdot)\right\|_{Z, 1,2}\mathrm{d}\tau\\
& \lesssim t^{-1}\int_{1}^{t} \tau \left\|(|u|+|v|)^{p-1}(\tau, \cdot)\right\|_{_{2+\varepsilon_1}}\||u-v|(\tau, \cdot)\|_{Z, 1, \bar{q}}\mathrm{d}\tau\\
&\lesssim t^{-1} \|u-v\|_{X(T)}\big(\|u\|_{X(T)}^{p-1}+\|v\|_{X(T)}^{p-1}\big)\int_{1}^{t} \tau^{1+p(\delta-1)}\mathrm{d}\tau\\
&\lesssim t^{-1} \|u-v\|_{X(T)}\big(\|u\|_{X(T)}^{p-1}+\|v\|_{X(T)}^{p-1}\big),
\end{aligned}
\end{equation}
where $\bar{q}$ is defined by \eqref{L17-1}.  Therefore, \eqref{L4} is shown and then Proposition \ref{prop-1}
is established by collecting \eqref{L1}, \eqref{L20}-\eqref{L21} and \eqref{L3}.

\end{proof}

Based on Proposition \ref{prop-1}, we start to prove Theorem \ref{YH-1}.
\vskip 0.1 true cm

\begin{proof}
[Proof of Theorem~\ref{YH-1}]
 Choosing $M=3C\left\|\left(u_{0}, u_{1}\right)\right\|_{\mathcal{A}}$ in $X(T, M)$, where $C$ is the positive constant mentioned in
 Proposition \ref{prop-1}. Then for any $u \in X(T, M)$, one can obtain
\begin{equation}\label{L6}
\begin{aligned}
\|\mathcal{N} u\|_{X(T)}
& \leq C\ve\left\|\left(u_{0}, u_{1}\right)\right\|_{\mathcal{A}}+C\|u\|_{X(T)}^{p} \\
& \leq C \ve\left\|\left(u_{0}, u_{1}\right)\right\|_{\mathcal{A}}+C\left(3 C\ve\left\|\left(u_{0}, u_{1}\right)\right\|_{\mathcal{A}}\right)^{p} \\
& \leq\left(C+3^{p} C^{p+1}\left\|\left(u_{0}, u_{1}\right)\right\|_{\mathcal{A}}^{p-1}\ve^{p-1}\right)\ve\left\|\left(u_{0}, u_{1}\right)\right\|_{\mathcal{A}} \\
& \leq 3 C\ve\left\|\left(u_{0}, u_{1}\right)\right\|_{\mathcal{A}},
\end{aligned}
\end{equation}
where $\ve\le \big(\f{2}{3^pC^p}\big)^{\f{1}{p-1}}\f{1}{1+\left\|\left(u_{0}, u_{1}\right)\right\|_{\mathcal{A}}}$. Similarly,
\begin{equation}\label{L7}
\begin{aligned}
\|\mathcal{N} u-\mathcal{N} v\|_{X(T)} &\leq C\|u-v\|_{X(T)}\left(\|u\|_{X(T)}^{p-1}+\|v\|_{X(T)}^{p-1}\right)\\
&\leq C\|u-v\|_{X(T)}\cdot2\left(3 C\ve\left\|\left(u_{0}, u_{1}\right)\right\|_{\mathcal{A}}\right)^{p-1}\\
&\leq \f{1}{2}\|u-v\|_{X(T)},
\end{aligned}
\end{equation}
provided that $\ve\le\f{{(4C)}^{-\f{1}{p-1}}}{3C(1+\left\|\left(u_{0}, u_{1}\right)\right\|_{\mathcal{A}})}$. Therefore,
by setting $\varepsilon_0:=\min\{\big(\f{2}{3^pC^p}\big)^{\f{1}{p-1}}\f{1}{1+\left\|\left(u_{0}, u_{1}\right)\right\|_{\mathcal{A}}},$
$\f{{(4C)}^{-\f{1}{p-1}}}{3C(1+\left\|\left(u_{0}, u_{1}\right)\right\|_{\mathcal{A}})}\}$ and $\ve\leq \varepsilon_0$,
one can conclude that $\mathcal{N}$ is a contractive mapping from $X(T, M)$ into $X(T, M)$. Therefore, there exists a unique solution $u \in X(T, M)$ such that
$\mathcal{N}u=u$. This yields that $u$ is a global solution  of \eqref{equ:eff1} since the related constant $C>0$ is independent
of $T$. Then the proof of Theorem \ref{YH-1} is completed.
\end{proof}

\vskip 0.2 true cm

{\bf Acknowledgements}. Yin Huicheng wishes to express his deep gratitude to Professor Ingo Witt
(University of G\"ottingen, Germany) for his constant interests in this problem and many fruitful discussions in the past.
In addition, the authors would like to thank Dr. He Daoyin  for his invaluable discussions.

\vskip 0.2 true cm

{\bf \color{blue}{Conflict of Interest Statement:}}

\vskip 0.1 true cm

{\bf The authors declare that there is no conflict of interest in relation to this article.}

\vskip 0.2 true cm
{\bf \color{blue}{Data availability statement:}}

\vskip 0.1 true cm

{\bf  Data sharing is not applicable to this article as no data sets are generated
during the current study.}

\vskip 0.2 true cm

%% -------------------------------------------------------------------

\end{document}